\documentclass[preprint,10pt]{elsarticle}

\usepackage[T1]{fontenc}
\usepackage[utf8]{inputenc}
\usepackage[english]{babel}
\usepackage{graphicx}
\usepackage{amsmath,amssymb,amsfonts}
\usepackage{amsthm}
\usepackage{mathtools}
\usepackage{bm}
\usepackage{mathrsfs}
\usepackage{textcomp}
\usepackage{manyfoot}
\usepackage{booktabs}
\usepackage{enumitem}
\usepackage{microtype}

\biboptions{sort&compress}
\theoremstyle{plain}
\newtheorem{theorem}{Theorem}
\newtheorem{corollary}{Corollary}
\newtheorem{lemma}{Lemma}
\newtheorem{proposition}{Proposition}

\theoremstyle{definition}
\newtheorem{example}{Example}

\theoremstyle{remark}
\newtheorem{remark}{Remark}

\AtBeginDocument{}
\journal{arXiv preprint}

\begin{document}

\begin{frontmatter}

\title{Cone Minimax Principles for  Non-Selfadjoint Operator Pencils}

\author[inst1]{Yavdat Sh. Il'yasov\corref{cor1}}
\ead{ilyasov02@gmail.com}

\author[inst1]{Nur F. Valeev}
\ead{valeevnf@yandex.ru}

\cortext[cor1]{Corresponding author.}

\affiliation[inst1]{organization={Institute of Mathematics with Computing Centre, Ufa Federal Research Centre, Russian Academy of Sciences},
             city={Ufa},
             country={Russia}}

\begin{abstract}
We propose a variational approach to principal spectral values of
	non-selfadjoint operator pencils \(\mathcal L u=\lambda\mathcal G u\), where
	the weight operator \(\mathcal G\) may be singular. The aim is to obtain
	Rayleigh-type minimax formulas for selected real spectral levels in settings
	where the standard selfadjoint variational theory is unavailable and
	positivity-based methods may not apply directly. The construction is based on
	the extended two-variable Rayleigh quotient
	\[	\mathcal R(u,v)
	=
	\frac{\langle \mathcal L u,v\rangle}
	{(\mathcal G u,v)_H},\]
		 defined
	on admissible cone pairs. It leads to dual sup--inf and inf--sup principal
	levels, cone quasi-eigenvalues, and corresponding trapping and saddle-point
	principles. The resulting minimax formulas characterize selected real cone
	levels of non-selfadjoint operator pencils and identify them with principal
	spectral values whenever positive right-left eigenpairs exist, including
	cases with non-invertible operators and singular weights.
	
	We prove that these formulas are stable under finite-dimensional
	approximation. Thus the classical idea of approximating spectral data by
	finite-dimensional variational problems acquires an analogue for
	non-selfadjoint operator pencils in an ordered cone setting. The method also
	yields a posteriori spectral certificates, one-sided perturbation bounds, and
	approximation estimates. Elliptic examples illustrate both the scope of the
	method and the sharpness of the estimates.
\end{abstract}

\begin{keyword}
Non-selfadjoint operator pencils \sep
cone minimax levels \sep
extended Rayleigh quotient \sep
cone quasi-eigenvalues \sep
spectral approximation \sep
a posteriori spectral certificates
\MSC[2020] 47A75 \sep 47A56 \sep 47B65 \sep 35P15 \sep 65N30
\end{keyword}

\end{frontmatter}


\section{Introduction}

We study spectral problems of the form
\[
\mathcal L u=\lambda \mathcal G u,
\]
where \(\mathcal L\) is not necessarily selfadjoint or invertible, and the
weight operator \(\mathcal G\) may be singular. The purpose of this paper is
to develop a cone variational characterization of selected principal levels
for operator pencils \(\mathcal L-\lambda\mathcal G\), based on an extended
two-variable Rayleigh quotient. When positive right-left eigenpairs exist,
these levels coincide with the corresponding principal spectral values.

The spectral theory of non-selfadjoint operators does not usually admit a
variational structure comparable to that of the selfadjoint theory. In the
selfadjoint case, the Rayleigh quotient and the Courant--Fischer principle
provide eigenvalue characterizations and serve as a basis for spectral
approximation and qualitative analysis. For non-selfadjoint operators,
principles of this type are generally unavailable. Principal spectral values
can often be treated by positivity methods, such as Perron--Frobenius and
Krein--Rutman theory~\cite{KreinRutman}, or, for elliptic operators, by
maximum-principle methods and generalized principal eigenvalue theory
\cite{ProtterWeinberger,BerestNienbVardan,Nussbaum1988,LemmensNussbaum2012}.
These methods are powerful, but they typically rely on positivity, cone
invariance, maximum-principle structure, or invertibility of a shifted operator
together with positivity of the corresponding resolvent.

Variational characterizations of principal spectral quantities in ordered
settings have a long history. In finite dimensions, the classical
Collatz--Wielandt and Birkhoff--Varga formulas characterize the Perron root
of positive or essentially positive matrices
\cite{Birkhoff1957,Collatz1942,Varga1962,Wielandt1950}. For nonsymmetric
elliptic operators, Donsker--Varadhan obtained a variational formula for the
principal eigenvalue under maximum-principle assumptions~\cite{Donsker}, and
the connection between principal eigenvalues and maximum principles was
further developed, in particular, by Berestycki--Nirenberg--Varadhan
\cite{BerestNienbVardan}. The present work is close in spirit to this ordered
variational viewpoint, but the object considered here is an operator pencil
\(\mathcal L-\lambda\mathcal G\), the weight \(\mathcal G\) is allowed to be
singular, and the variational quotient has two independent cone variables.

The need for such a formulation already appears in simple systems. A
zero-order skew-symmetric coupling may destroy invariance of the standard
componentwise positive cone, even though the operator still has meaningful
real levels detected by ordered tests. This motivates an approach which treats
the quotient itself as the basic object and does not first reduce the problem
to the construction of a positive resolvent.

A further motivation comes from bifurcation theory. Extended
Rayleigh-quotient type formulas, close in spirit to the one used here, have
been applied to the localization of maximal saddle-node bifurcations in
nonlinear equations; see
\cite{ilyasov2021,IlyasJDE24,Ilyasov2026,Salazar2020}. In such problems one
seeks a variational or minimax characterization of the critical parameter at
which solvability or stability changes. The present paper develops a linear
spectral counterpart of this idea for non-selfadjoint operator pencils.

Let \(\mathcal S\) be a cone and let \(\mathcal S^o\) denote its strictly
positive part. Following the extended functional method introduced in
\cite{IlyasFunc} and further developed in
\cite{IlyasJDE24,IlyasovValeev2024}, we use the two-variable Rayleigh quotient
\[
\mathcal R(u,v)
=
\frac{\langle\mathcal L u,v\rangle}{(\mathcal G u,v)_H},
\]
where \(u\) and \(v\) are independent cone variables. The denominator is
assumed positive on admissible mixed cone pairs. We consider the two minimax
levels
\[
\overline\lambda
=
\sup_{u\in\mathcal S\setminus\{0\}}
\inf_{v\in\mathcal S^o}
\mathcal R(u,v),
\qquad
\underline\lambda
=
\inf_{v\in\mathcal S\setminus\{0\}}
\sup_{u\in\mathcal S^o}
\mathcal R(u,v).
\]
When these levels coincide, their common value is denoted by
\(\hat\lambda\). The main questions are whether this value is related to
eigenvalues of the pencil, whether it can be recovered by finite-dimensional
minimax approximation, and whether it gives computable one-sided certificates.

If a positive right-left eigenpair is already known, then the corresponding
eigenvalue is naturally detected by the two-variable quotient. The point of
the present work is different: we use this quotient as a variational object
in its own right and study the real cone levels selected by the associated
minimax construction. In finite dimensions, related cone minimax formulas for
matrices were studied in~\cite{IlyasovValeev2024}. The present paper develops
this viewpoint for operator pencils and for their finite-dimensional
approximations.

In the abstract part we prove that every cone quasi-eigenvalue is trapped
between the two minimax levels. If the relevant extremal relations are
attained at strictly positive vectors, then the two levels collapse and the
common value admits a saddle-point interpretation. In particular, whenever a
strictly positive right-left eigenpair exists, the associated eigenvalue
realizes the cone minimax identity.

A central feature of the minimax identity is its a posteriori content. This
is not merely an additional consequence, but one of the main reasons for
working with the two-variable quotient. Once
\[
\overline\lambda=\underline\lambda=\hat\lambda,
\]
every admissible trial element gives a one-sided bound for the selected
spectral value. Namely,
\[
\lambda_R(u)
:=
\inf_{v\in\mathcal S^o}\mathcal R(u,v),
\qquad
\lambda_L(v)
:=
\sup_{u\in\mathcal S^o}\mathcal R(u,v)
\]
satisfy
\[
\lambda_R(u)\le \hat\lambda\le \lambda_L(v).
\]
Thus the minimax formula is not only a characterization theorem; it also gives
directly verifiable lower and upper certificates. The results below identify
conditions under which these continuous certificates are approximated by
finite-dimensional, componentwise computable Collatz--Wielandt certificates,
with a primal--dual gap providing a practical stopping criterion. This
distinguishes the method from a purely existential spectral theory.

The main approximation results concern elliptic operator pencils. We use
standard conforming finite-dimensional approximations in the spirit of the
finite element method~\cite{ciaret,ciaretRav2,ciaretRav,Ern}. Under
finite-dimensional mixed positivity and boundary-exclusion assumptions, the
discrete minimax levels collapse to positive right-left eigenpairs. Under
compactness and uniform boundedness assumptions, subsequences of these levels
have limits, and the corresponding extremal sequences converge to limiting
variational objects. Under additional strong cone assumptions on
\(\mathcal L\) and \(\mathcal L^*\), these limits are identified with
eigenpairs of the infinite-dimensional pencil, and the limiting value is
identified with the continuous cone minimax level.

The maximum-principle input in this part has a specific role. It is not used
to define the principal value or to reduce the pencil to a positive resolvent.
Rather, after the Galerkin minimax limit has produced nonnegative right-left
quasi-eigenvectors, a cone form of the strong maximum principle is used to
upgrade them to strictly positive right-left eigenvectors.

We also prove one-sided perturbation and approximation estimates. These
estimates complement classical perturbation theory for isolated eigenvalues
\cite{Kato}: here the selected value is controlled by cone minimax quantities
rather than by a single eigenvalue derivative. Simple model examples show
that the estimates are sharp.

The paper includes examples illustrating the scope of the method. One model
treats a genuinely coupled non-selfadjoint elliptic pencil with singular
weight, for which the hypotheses of the convergence theorem can be verified
explicitly but no closed formula for the minimax value is available. Another
example computes the cone minimax value for a two-component elliptic system
with a fixed-sign skew-symmetric coupling. This system is non-cooperative with
respect to the standard componentwise order, and the coupling does not
preserve the positive cone. Nevertheless, the cone minimax value is explicit
and is determined by the principal eigenvalue of the underlying scalar drift
operator.

The paper is organized as follows. Section~\ref{sec:abstract} introduces the
abstract cone minimax framework, cone quasi-eigenvalues, and the trapping
principle. Section~\ref{sec:approximation} develops the discrete minimax
formula and the finite-dimensional minimax collapse. Section~\ref{sec:left-minimax-attainment}
proves attainment of the lower principal level. Section~\ref{sec:pde} passes
to the Galerkin limit and identifies the continuous minimax value under strong
cone assumptions; it also discusses the relation with shifted Krein--Rutman
reductions and maximum-principle arguments. Section~\ref{sec:aposteriori-enclosures}
gives a posteriori spectral enclosures and primal--dual gaps.
Section~\ref{sec:stability} proves one-sided perturbation and
finite-dimensional approximation error estimates.
Section~\ref{sec:complete-singular-weight-model} presents a genuinely coupled
non-selfadjoint model with singular weight. Section~\ref{sec:exampl} treats a
non-cooperative skew-coupled elliptic system. The final section contains
concluding remarks.

\section{Abstract minimax framework}
\label{sec:abstract}

We develop the minimax mechanism in an abstract ordered setting. The
structure is deliberately minimal at this stage: the essential ingredients
are the cone, the positivity of the denominator, and the two-sided Rayleigh
quotient. Compactness and the specific elliptic form of the operators enter
only later, when the abstract hypotheses are verified in concrete
applications.

\medskip

\noindent\textbf{Functional-analytic setting.}\;
Throughout, $\mathcal{V}$ is a real Banach space and $H$ is a real Hilbert
space forming a Gelfand triple
\[
\mathcal{V} \hookrightarrow H \equiv H^{*} \hookrightarrow \mathcal{V}^{*}
\]
with dense continuous embeddings; the duality pairing is written
$\langle\cdot,\cdot\rangle$. We fix an ordered Banach function space
$\mathcal{C}^{0}$, and all order relations are understood in $\mathcal{C}^{0}$.
When a limiting argument requires it, we shall additionally assume that
$\mathcal{V}$ is reflexive and that the embedding $\mathcal{V}\hookrightarrow H$
is compact.

\medskip

\noindent\textbf{The cone.}\;
The central object is a convex cone
$\mathcal{S}^{o}\subset\mathcal{V}\cap\mathcal{C}^{0}$ of strictly positive
elements. Its closure in $\mathcal{C}^{0}$, intersected with $\mathcal{V}$,
gives the full cone
\[
\mathcal{S} := \overline{\mathcal{S}^{o}}^{\,\mathcal{C}^{0}}\cap\mathcal{V}.
\]
We assume that $\mathcal{S}$ is pointed and that $\mathcal{S}^{o}$ is dense
in $\mathcal{S}\setminus\{0\}$ in the $\mathcal{V}$-topology. In applications
$\mathcal{S}^{o}$ is typically the strictly positive part of the cone, though
no topological interior assumption is imposed here.

\medskip

\noindent\textbf{The spectral problem.}\;
Let $\mathcal{L}:\mathcal{V}\to\mathcal{V}^{*}$ and $\mathcal{G}:H\to H$ be
bounded linear operators. We impose the following positivity condition on the
weight.

\begin{description}
	\item[$(P)$] \emph{Strict positivity of the weight.}
	\[
	(\mathcal{G}u,v)_H>0
	\qquad
	\forall\,(u,v)\in\mathcal D,
	\]
	where
	\[
	\mathcal D
	:=
	(\mathcal S\setminus\{0\})\times\mathcal S^o
	\cup
	\mathcal S^o\times(\mathcal S\setminus\{0\}).
	\]
\end{description}

We regard $\mathcal G u\in H$ as an element of $\mathcal V^*$ through the
embedding $H\hookrightarrow\mathcal V^*$, and study the generalized
eigenvalue problem
\[
\mathcal{L}u = \lambda\mathcal{G}u.
\]

\medskip

\noindent\textbf{The Rayleigh quotient and the minimax levels.}\;
Whenever condition $(P)$ is in force, the denominator is positive and the
\emph{extended Rayleigh quotient}
\[
\mathcal{R}(u,v)
:= \frac{\langle\mathcal{L}u,v\rangle}{(\mathcal{G}u,v)_{H}}
\]
is well defined. For a fixed trial vector $u\in\mathcal{S}\setminus\{0\}$,
optimizing over positive test vectors gives the one-sided quantity
\[
\overline{\lambda}(u)
:= \inf_{w\in\mathcal{S}^{o}}\mathcal{R}(u,w),
\]
and fixing instead the test direction $v\in\mathcal{S}\setminus\{0\}$ gives
\[
\underline{\lambda}(v)
:= \sup_{w\in\mathcal{S}^{o}}\mathcal{R}(w,v).
\]
The two \emph{principal minimax levels} are the global extrema of these
quantities over the cone:
\begin{align*}
	\overline{\lambda}
	&:= \sup_{\phi\in\mathcal{S}\setminus\{0\}}\overline{\lambda}(\phi)
	= \sup_{\phi\in\mathcal{S}\setminus\{0\}}
	\inf_{w\in\mathcal{S}^{o}}\mathcal{R}(\phi,w),\\
	\underline{\lambda}
	&:= \inf_{\psi\in\mathcal{S}\setminus\{0\}}\underline{\lambda}(\psi)
	= \inf_{\psi\in\mathcal{S}\setminus\{0\}}
	\sup_{w\in\mathcal{S}^{o}}\mathcal{R}(w,\psi).
\end{align*}
The notation is mnemonic rather than order-theoretic:
$\overline\lambda$ denotes the right sup--inf level, while
$\underline\lambda$ denotes the left inf--sup level. No ordering between
$\overline{\lambda}$ and $\underline{\lambda}$ is assumed; identifying
conditions under which they coincide is the main task of this section.

\medskip

\noindent\textbf{Quasi-eigenvectors and quasi-eigenvalues.}\;
A quasi-eigenvector is not defined first by an algebraic eigenvalue equation,
but by realization of one of the cone minimax profiles. We call a vector
$\phi_{*}\in\mathcal{S}\setminus\{0\}$ a \emph{right cone quasi-eigenvector}
at level $\lambda_{*}$ if
\[
\lambda_{*}
=
\overline{\lambda}(\phi_{*})
=
\inf_{w\in\mathcal{S}^{o}}\mathcal{R}(\phi_{*},w);
\]
a vector $\psi_{*}\in\mathcal{S}\setminus\{0\}$ is a
\emph{left cone quasi-eigenvector} at level $\lambda_{*}$ if
\[
\lambda_{*}
=
\underline{\lambda}(\psi_{*})
=
\sup_{w\in\mathcal{S}^{o}}\mathcal{R}(w,\psi_{*}).
\]
A real number $\lambda_{*}$ is a \emph{cone quasi-eigenvalue} if it is
simultaneously realized by a right and a left quasi-eigenvector. If
\[
\hat\lambda
:=
\overline{\lambda}
=
\underline{\lambda},
\]
namely
\[
\hat\lambda
=
\sup_{\phi\in\mathcal{S}\setminus\{0\}}
\inf_{w\in\mathcal{S}^{o}}\mathcal{R}(\phi,w)
=
\inf_{\psi\in\mathcal{S}\setminus\{0\}}
\sup_{w\in\mathcal{S}^{o}}\mathcal{R}(w,\psi),
\]
we say that the \emph{minimax condition} holds, and a realizing pair
$(\phi_{*},\psi_{*})$ is called a \emph{principal right-left quasi-pair}.

Every strictly positive right-left eigenpair---vectors
$\phi_{*},\psi_{*}\in\mathcal{S}^{o}$ satisfying
$\mathcal{L}\phi_{*}=\lambda_{*}\mathcal{G}\phi_{*}$ and
$\mathcal{L}^{*}\psi_{*}=\lambda_{*}\mathcal{G}^{*}\psi_{*}$---automatically
satisfies $\overline{\lambda}(\phi_{*})=\lambda_{*}=\underline{\lambda}(\psi_{*})$
and is therefore a quasi-pair. The converse need not hold: the quasi-eigenvalue
notion may produce a meaningful real level even when no conventional real
eigenpair exists.

\medskip

\begin{example}[A real minimax level from non-real spectrum]
	Take
	\[
	A = \begin{pmatrix}\alpha & \omega \\ -\omega & \alpha\end{pmatrix},
	\qquad G = I,
	\qquad \mathcal{S} = \mathbb{R}_{+}^{2},
	\qquad \mathcal{S}^{o} = (0,\infty)^{2}.
	\]
	The spectrum of $A$ is $\{\alpha\pm i\omega\}$, which is non-real whenever
	$\omega\ne0$. Nevertheless a direct computation gives
	$\underline{\lambda}=\overline{\lambda}=\alpha$, with quasi-pair $(e_{2},e_{2})$
	for $\omega>0$ and $(e_{1},e_{1})$ for $\omega<0$. Indeed, the skew part can
	be made one-sidedly invisible by testing along the corresponding boundary
	direction of the cone. The minimax construction thus extracts a real cone
	level from an operator with no real eigenvalue at that level; the
	quasi-eigenvectors play the role that conventional eigenvectors cannot.
\end{example}

\begin{lemma}[Minimax trapping]\label{lem:main}
	Every cone quasi-eigenvalue $\lambda_{*}$ satisfies
	\[
	\underline{\lambda} \leq \lambda_{*} \leq \overline{\lambda}.
	\]
	In particular, if the minimax condition holds, then $\hat\lambda$ is the
	unique cone quasi-eigenvalue.
\end{lemma}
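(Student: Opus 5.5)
The plan is to argue directly from the definitions. The two levels $\overline\lambda$ and $\underline\lambda$ are, by construction, the supremum and infimum of the one-sided profiles $\overline\lambda(\cdot)$ and $\underline\lambda(\cdot)$ over $\mathcal S\setminus\{0\}$. A cone quasi-eigenvalue is a value attained by each profile at some admissible vector. No minimax theorem or compactness is needed.

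First I check that all quantities are well defined. For $\phi\in\mathcal S\setminus\{0\}$ and $w\in\mathcal S^o$, the pair $(\phi,w)$ lies in $\mathcal D$, so $(\mathcal G\phi,w)_H>0$ by $(P)$ and $\mathcal R(\phi,w)$ is a finite real number. The same holds for $(w,\psi)$ with $\psi\in\mathcal S\setminus\{0\}$. Hence $\overline\lambda(\phi)$ and $\underline\lambda(\psi)$ are well-defined elements of $[-\infty,+\infty]$, and so are $\overline\lambda$ and $\underline\lambda$.

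Now let $\lambda_*$ be a cone quasi-eigenvalue. By definition there is a right quasi-eigenvector $\phi_*\in\mathcal S\setminus\{0\}$ with $\lambda_*=\overline\lambda(\phi_*)$, and a left quasi-eigenvector $\psi_*\in\mathcal S\setminus\{0\}$ with $\lambda_*=\underline\lambda(\psi_*)$. Since $\phi_*$ is an admissible competitor in the supremum defining $\overline\lambda$,
\[
\lambda_*=\overline\lambda(\phi_*)\le\sup_{\phi\in\mathcal S\setminus\{0\}}\overline\lambda(\phi)=\overline\lambda .
\]
Since $\psi_*$ is an admissible competitor in the infimum defining $\underline\lambda$,
\[
\underline\lambda=\inf_{\psi\in\mathcal S\setminus\{0\}}\underline\lambda(\psi)\le\underline\lambda(\psi_*)=\lambda_* .
\]
Combining the two gives $\underline\lambda\le\lambda_*\le\overline\lambda$. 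In particular, the mere existence of a quasi-eigenvalue forces $\underline\lambda\le\overline\lambda$, even though no ordering between the levels was assumed.

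For the second assertion, suppose the minimax condition $\overline\lambda=\underline\lambda=\hat\lambda$ holds. The trapping inequality then squeezes every cone quasi-eigenvalue to $\lambda_*=\hat\lambda$, so there is at most one, and any principal right-left quasi-pair realizes exactly this value.

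There is no genuine obstacle. The only point needing care is reading "unique" correctly: the lemma asserts uniqueness, meaning every quasi-eigenvalue equals $\hat\lambda$, and not existence. Existence of a realizing quasi-pair is a separate attainment question that the lemma does not claim and that the later sections address.
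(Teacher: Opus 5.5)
Your proof is correct and is the same argument as the paper's: $\phi_*$ and $\psi_*$ are admissible competitors in the supremum defining $\overline\lambda$ and the infimum defining $\underline\lambda$, and the uniqueness claim then follows by squeezing. You are also right that the second assertion only yields ``at most one'' quasi-eigenvalue; existence of a realizing pair requires attainment, which the paper's one-line proof does not address either.
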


\begin{proof}
	If $\lambda_{*}$ is realized by $\phi_{*},\psi_{*}\in\mathcal{S}\setminus\{0\}$,
	then $\overline{\lambda}(\phi_{*})=\lambda_{*}\leq\overline{\lambda}$ and
	$\underline{\lambda}\leq\underline{\lambda}(\psi_{*})=\lambda_{*}$.
\end{proof}

The next result shows that when the realizing vectors lie strictly inside the
cone, the trapping is sharp.

\begin{corollary}[Interior quasi-eigenvectors collapse the minimax levels]
	\label{cor:interior-eigenvalue-collapse}
	If $\lambda_{*}$ is a cone quasi-eigenvalue realized by
	$\phi_{*},\psi_{*}\in\mathcal{S}^{o}$, then
	$\underline{\lambda}=\lambda_{*}=\overline{\lambda}$.
	Consequently, failure of the minimax condition implies that no real eigenvalue
	admits a strictly positive right-left eigenpair.
\end{corollary}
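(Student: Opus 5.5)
The plan is to prove the reverse inequality $\overline{\lambda}\le\lambda_*\le\underline{\lambda}$ using the interior realizers, and then combine it with Lemma~\ref{lem:main}, which already gives $\underline{\lambda}\le\lambda_*\le\overline{\lambda}$. The interior membership of $\phi_*,\psi_*$ is precisely what lets them serve as admissible test vectors in the inner optimizations.

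\emph{Upper bound for $\overline{\lambda}$.} Fix any $\phi\in\mathcal{S}\setminus\{0\}$. Since $\psi_*\in\mathcal{S}^o$, the pair $(\phi,\psi_*)$ lies in $\mathcal D$, so $\mathcal R(\phi,\psi_*)$ is well defined by $(P)$. We may therefore take $w=\psi_*$ in the infimum:
\[
\overline{\lambda}(\phi)=\inf_{w\in\mathcal{S}^o}\mathcal R(\phi,w)\le\mathcal R(\phi,\psi_*).
\]
It remains to bound $\mathcal R(\phi,\psi_*)$ by $\lambda_*$. Here we use that $\psi_*$ is a left quasi-eigenvector, so $\sup_{w\in\mathcal S^o}\mathcal R(w,\psi_*)=\lambda_*$, which gives $\mathcal R(w,\psi_*)\le\lambda_*$ for all $w\in\mathcal S^o$.

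This is the main obstacle: $\phi$ lies only in $\mathcal S\setminus\{0\}$, not in $\mathcal S^o$. To pass from $\mathcal S^o$ to $\mathcal S\setminus\{0\}$, I would use the density of $\mathcal S^o$ in $\mathcal S\setminus\{0\}$ in the $\mathcal V$-topology. Choose $w_n\in\mathcal S^o$ with $w_n\to\phi$ in $\mathcal V$. Then $\langle\mathcal L w_n,\psi_*\rangle\to\langle\mathcal L\phi,\psi_*\rangle$ because $\mathcal L$ is bounded. Likewise $(\mathcal G w_n,\psi_*)_H\to(\mathcal G\phi,\psi_*)_H$, using the continuous embedding $\mathcal V\hookrightarrow H$ and the boundedness of $\mathcal G$. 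The limit denominator is strictly positive by $(P)$, since $(\phi,\psi_*)\in\mathcal D$. Hence the quotients converge, and
\[
\mathcal R(\phi,\psi_*)=\lim_n\mathcal R(w_n,\psi_*)\le\lambda_*.
\]
Thus $\overline\lambda(\phi)\le\lambda_*$ for every $\phi$, and so $\overline\lambda\le\lambda_*$.

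\emph{Lower bound for $\underline{\lambda}$.} The argument is symmetric. For $\psi\in\mathcal S\setminus\{0\}$, the pair $(\phi_*,\psi)\in\mathcal D$ because $\phi_*\in\mathcal S^o$. Taking $w=\phi_*$ in the supremum gives $\underline\lambda(\psi)\ge\mathcal R(\phi_*,\psi)$. Approximating $\psi$ by $w_n\in\mathcal S^o$ and using $\mathcal R(\phi_*,w)\ge\lambda_*$ on $\mathcal S^o$ (the right quasi-eigenvector property), continuity gives $\mathcal R(\phi_*,\psi)\ge\lambda_*$. Hence $\underline\lambda\ge\lambda_*$. Combined with the lemma, this yields $\underline\lambda=\lambda_*=\overline\lambda$.

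\emph{Consequence.} Suppose a real eigenvalue $\lambda_*$ has a strictly positive right-left eigenpair $\phi_*,\psi_*\in\mathcal S^o$. As noted in the text before the example, $\overline\lambda(\phi_*)=\lambda_*=\underline\lambda(\psi_*)$. To check this directly: for any $w$, $\langle\mathcal L\phi_*,w\rangle=\lambda_*(\mathcal G\phi_*,w)_H$, so $\mathcal R(\phi_*,w)\equiv\lambda_*$, and similarly via $\mathcal L^*$ and $\mathcal G^*$ on the left. So $\lambda_*$ is a cone quasi-eigenvalue with interior realizers. By the first part the minimax condition then holds, and the contrapositive gives the claim.
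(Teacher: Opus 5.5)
Your proposal is correct and matches the paper's proof: you get the reverse inequalities by inserting the interior realizer as an admissible test vector, pass from $\mathcal S^o$ to $\mathcal S\setminus\{0\}$ using density and the positivity of the limiting denominator from $(P)$, and then combine with Lemma~\ref{lem:main}. You also check explicitly that a strictly positive right-left eigenpair gives interior realizers, which the paper leaves to the remark preceding the example.
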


\begin{proof}
	By Lemma~\ref{lem:main} it suffices to establish the reverse inequalities
	$\underline{\lambda}\geq\lambda_{*}$ and $\overline{\lambda}\leq\lambda_{*}$.
	We prove the first; the second is symmetric.
	
	Take any $\psi\in\mathcal{S}\setminus\{0\}$ and choose
	$\psi_{n}\in\mathcal{S}^{o}$ with $\psi_{n}\to\psi$ in $\mathcal{V}$.
	Since $\overline{\lambda}(\phi_{*})=\lambda_{*}$, we have
	$\mathcal{R}(\phi_{*},\psi_{n})\geq\lambda_{*}$ for all $n$. As
	$\phi_{*}\in\mathcal{S}^{o}$, condition $(P)$ gives
	$(\mathcal{G}\phi_{*},\psi)_{H}>0$, so the quotient is continuous in
	$\psi_{n}$ and the inequality survives in the limit:
	$\mathcal{R}(\phi_{*},\psi)\geq\lambda_{*}$. Since
	$\phi_{*}\in\mathcal{S}^{o}$, this means
	$\underline{\lambda}(\psi)\geq\mathcal{R}(\phi_{*},\psi)\geq\lambda_{*}$.
	As $\psi$ was arbitrary, $\underline{\lambda}\geq\lambda_{*}$.
	The proof of $\overline{\lambda}\le\lambda_{*}$ is the same, using
	$\psi_{*}\in\mathcal S^o$ and the identity
	$\underline{\lambda}(\psi_{*})=\lambda_{*}$.
\end{proof}

\begin{remark}[Strict positivity is essential]
	The conclusion fails for boundary eigenvectors. For
	$A=\operatorname{diag}(1,2)$, $G=I$, $\mathcal{S}=\mathbb{R}_{+}^{2}$, one
	finds $\underline{\lambda}=1<2=\overline{\lambda}$. The eigenvalue $2$ is
	realized by $e_{2}\in\partial\mathcal{S}$, yet the levels do not collapse,
	because $e_{2}$ is not an admissible strictly positive test direction.
\end{remark}

A complementary collapse mechanism operates even when the realizing vectors
reach the boundary of the cone, provided both principal levels are attained.

\begin{lemma}[Saddle collapse]\label{lem:saddle-collapse}
	Suppose $\overline{\lambda}$ and $\underline{\lambda}$ are finite and attained
	at $\phi^{*},\psi^{*}\in\mathcal{S}\setminus\{0\}$, with
	$(\mathcal{G}\phi^{*},\psi^{*})_{H}>0$. Then
	\[
	\overline{\lambda}
	\leq \mathcal{R}(\phi^{*},\psi^{*})
	\leq \underline{\lambda}.
	\]
	If, additionally, $\underline{\lambda}\leq\overline{\lambda}$ then
	\[
	\underline{\lambda}
	= \overline{\lambda}
	= \mathcal{R}(\phi^{*},\psi^{*}),
	\]
	and $(\phi^{*},\psi^{*})$ is a saddle point:
	\[
	\mathcal{R}(u,\psi^{*})
	\leq \mathcal{R}(\phi^{*},\psi^{*}) \leq
	\mathcal{R}(\phi^{*},v)
	\qquad \forall\,u,v\in\mathcal{S}^{o}.
	\]
\end{lemma}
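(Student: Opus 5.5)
The plan is to compare both attained levels with the single value $\mathcal{R}(\phi^{*},\psi^{*})$. Since $\phi^{*}$ and $\psi^{*}$ may lie on the boundary of the cone, they are not admissible test directions in the definitions of $\overline{\lambda}(\cdot)$ and $\underline{\lambda}(\cdot)$. I will therefore reuse the density–continuity device from the proof of Corollary~\ref{cor:interior-eigenvalue-collapse}, with the hypothesis $(\mathcal{G}\phi^{*},\psi^{*})_{H}>0$ supplying positivity of the denominator in the limit.

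\textbf{Step 1: the left inequality.} Attainment means $\overline{\lambda}=\overline{\lambda}(\phi^{*})=\inf_{w\in\mathcal{S}^{o}}\mathcal{R}(\phi^{*},w)$. Choose $\psi_{n}\in\mathcal{S}^{o}$ with $\psi_{n}\to\psi^{*}$ in $\mathcal{V}$, which is possible by density of $\mathcal{S}^{o}$ in $\mathcal{S}\setminus\{0\}$. Since $(\phi^{*},\psi_{n})\in\mathcal D$, condition $(P)$ makes each $\mathcal{R}(\phi^{*},\psi_{n})$ well defined, and $\mathcal{R}(\phi^{*},\psi_{n})\ge\overline{\lambda}$.

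The numerator $\langle\mathcal{L}\phi^{*},\psi_{n}\rangle$ converges because $\mathcal{L}\phi^{*}\in\mathcal{V}^{*}$ is fixed. The denominator $(\mathcal{G}\phi^{*},\psi_{n})_{H}$ converges because $\mathcal{V}\hookrightarrow H$ is continuous, and its limit $(\mathcal{G}\phi^{*},\psi^{*})_{H}$ is positive by hypothesis. Passing to the limit gives $\mathcal{R}(\phi^{*},\psi^{*})\ge\overline{\lambda}$.

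\textbf{Step 2: the right inequality.} This is symmetric. Take $\phi_{n}\in\mathcal{S}^{o}$ with $\phi_{n}\to\phi^{*}$ in $\mathcal{V}$. Then $(\phi_{n},\psi^{*})\in\mathcal D$, and
\[
\mathcal{R}(\phi_{n},\psi^{*})\le\underline{\lambda}(\psi^{*})=\underline{\lambda}.
\]
The numerator $\langle\mathcal{L}\phi_{n},\psi^{*}\rangle$ converges because $\mathcal{L}:\mathcal{V}\to\mathcal{V}^{*}$ is bounded. The denominator $(\mathcal{G}\phi_{n},\psi^{*})_{H}$ converges because $\mathcal{G}$ is bounded on $H$ and $\mathcal{V}\hookrightarrow H$ is continuous. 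The limit of the denominator is again positive, so $\mathcal{R}(\phi^{*},\psi^{*})\le\underline{\lambda}$. Finiteness of both levels ensures these are genuine real inequalities.

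\textbf{Step 3: collapse and saddle property.} If additionally $\underline{\lambda}\le\overline{\lambda}$, combining with Steps 1 and 2 forces
\[
\underline{\lambda}=\overline{\lambda}=\mathcal{R}(\phi^{*},\psi^{*}).
\]
The saddle inequalities then follow directly from the definitions. For $u\in\mathcal{S}^{o}$ we have $\mathcal{R}(u,\psi^{*})\le\underline{\lambda}(\psi^{*})=\underline{\lambda}=\mathcal{R}(\phi^{*},\psi^{*})$. For $v\in\mathcal{S}^{o}$ we have $\mathcal{R}(\phi^{*},v)\ge\overline{\lambda}(\phi^{*})=\overline{\lambda}=\mathcal{R}(\phi^{*},\psi^{*})$.

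\textbf{Main obstacle.} The only delicate point is the limiting argument in Steps 1–2. One must check that every approximating pair lies in $\mathcal D$, so the quotient is defined along the sequence, and that the limiting denominator does not vanish. The first follows from $(P)$ and the second from the explicit hypothesis $(\mathcal{G}\phi^{*},\psi^{*})_{H}>0$; without that hypothesis the quotient at the boundary pair could be undefined and the inequalities could fail to pass to the limit.
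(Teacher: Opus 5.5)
Your proof is correct and follows the paper's argument essentially step for step. You approximate the possibly-boundary vector by elements of $\mathcal{S}^{o}$, pass to the limit using $(\mathcal{G}\phi^{*},\psi^{*})_{H}>0$ to get each one-sided inequality, then squeeze and read off the saddle inequalities from the definitions of $\overline{\lambda}(\phi^{*})$ and $\underline{\lambda}(\psi^{*})$; you are only a bit more explicit than the paper about why the numerators and denominators converge.
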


\begin{proof}
	From $\overline{\lambda}(\phi^{*})=\overline{\lambda}$ we have
	$\mathcal{R}(\phi^{*},v)\geq\overline{\lambda}$ for all $v\in\mathcal{S}^{o}$.
	Choose $v_{n}\in\mathcal{S}^{o}$ with $v_{n}\to\psi^{*}$ in $\mathcal{V}$;
	the assumption $(\mathcal{G}\phi^{*},\psi^{*})_{H}>0$ allows passage to the
	limit, giving $\mathcal{R}(\phi^{*},\psi^{*})\geq\overline{\lambda}$.
	The symmetric argument using $\underline{\lambda}(\psi^{*})=\underline{\lambda}$
	yields $\mathcal{R}(\phi^{*},\psi^{*})\leq\underline{\lambda}$.
	If $\underline{\lambda}\leq\overline{\lambda}$, the three quantities are
	squeezed to equality, and the saddle inequalities follow directly from the
	definitions of $\overline{\lambda}(\phi^{*})$ and
	$\underline{\lambda}(\psi^{*})$. The additional ordering
	$\underline{\lambda}\leq\overline{\lambda}$ holds, for instance, whenever a
	cone quasi-eigenvalue exists, by Lemma~\ref{lem:main}.
\end{proof}

\begin{remark}[Relation to the spectrum]
	The cone minimax construction depends on the ordered triple
	$(\mathcal{L},\mathcal{G},\mathcal{S})$, not on $\mathcal{L}$ alone. When a
	strictly positive right-left eigenpair exists for a real eigenvalue
	$\lambda_{*}$, Corollary~\ref{cor:interior-eigenvalue-collapse} forces the
	minimax condition and identifies $\hat\lambda$ with $\lambda_{*}$.
	Contrapositively, failure of the minimax condition certifies the absence of
	any such eigenpair.
	
	When the relevant spectrum is non-real, as in the rotation example, the
	construction may still yield a real level $\hat\lambda$, but the corresponding
	quasi-pair is not a genuine spectral eigenpair; it is an intrinsically
	cone-theoretic object with no classical counterpart.
\end{remark}


\section{Discrete minimax formula}
\label{sec:approximation}

A notable feature of the finite-dimensional part is its generality. The
minimax identities below use only the ordered finite-dimensional cone
structure, strict positivity of the Galerkin weight, and a boundary-exclusion
condition. No ellipticity, cone invariance of the operator, resolvent
positivity, or compactness assumption is needed at this stage. These
additional analytic assumptions enter only later, when the discrete
minimax identities are connected with Galerkin limits of infinite-dimensional
operator pencils.

\medskip

\noindent\textbf{Elliptic realization.}\;
From this point on, when Galerkin approximation and elliptic examples are
considered, we use the following concrete realization of the abstract setting.
Let \(\Omega\subset\mathbb R^n\) be a bounded domain, set
\[
V:=H_0^1(\Omega),
\qquad
\mathcal V:=V^m,
\qquad
H:=[L^2(\Omega)]^m,
\]
and consider
\begin{equation}\label{eq:L_system}
	\mathcal L\mathbf u
	=
	-\sum_{i,j=1}^n \partial_j\!\bigl(A_{ij}(x)\partial_i\mathbf u\bigr)
	+
	\sum_{i=1}^n B_i(x)\partial_i\mathbf u
	+
	C(x)\mathbf u,
	\qquad
	\mathcal G\mathbf u=G(x)\mathbf u ,
\end{equation}
where
\[
A_{ij},\,B_i,\,C,\,G\in L^\infty(\Omega)^{m\times m}.
\]
No symmetry assumptions are imposed on these matrices. The operator
\(\mathcal L\) is understood through its weak realization
\(\mathcal L:\mathcal V\to\mathcal V^*\), given by
\[
\begin{aligned}
	\langle \mathcal L\mathbf u,\mathbf v\rangle
	&=
	\sum_{i,j=1}^n
	\int_\Omega
	\bigl(A_{ij}(x)\partial_i\mathbf u\bigr)\cdot
	\partial_j\mathbf v\,dx
	+
	\sum_{i=1}^n
	\int_\Omega
	\bigl(B_i(x)\partial_i\mathbf u\bigr)\cdot
	\mathbf v\,dx                                      \\
	&\quad
	+
	\int_\Omega
	\bigl(C(x)\mathbf u\bigr)\cdot \mathbf v\,dx,
	\qquad \mathbf u,\mathbf v\in\mathcal V .
\end{aligned}
\]
For admissible pairs we keep the notation
\begin{equation}\label{eq:Rel}
	\mathcal R(\mathbf u,\mathbf v)
	:=
	\frac{\langle \mathcal L\mathbf u,\mathbf v\rangle}
	{(\mathcal G\mathbf u,\mathbf v)_H}.
\end{equation}

We assume that $\Omega$ admits an interior polyhedral exhaustion: there exist
polyhedral subdomains $\Omega_r\subset\Omega$ with $\bigcup_{r\ge1}\Omega_r=\Omega$.
The index $r$ encodes both the choice of subdomain and the mesh size. On each
$\Omega_r$ we fix a shape-regular conforming $P_1$-triangulation $\mathcal{T}_r$
and let $V_r$ be the corresponding finite element space with zero boundary values
on $\partial\Omega_r$, extended by zero to $\Omega$, so that $V_r\subset V$. We
assume the standard Galerkin density property
\begin{equation}\label{eq:Wr-density}
	\forall\,\xi\in V \quad \exists\,\xi_r\in V_r \quad\text{such that}\quad
	\xi_r\longrightarrow\xi \quad\text{strongly in }V,
\end{equation}
which holds for standard interior polyhedral exhaustions and shape-regular
conforming spaces; see~\cite{ciaretRav,ciaret,Ern}.

Let $B_1,\ldots,B_{N_r}$ be the interior nodes of $\mathcal{T}_r$ with nodal
basis $\{\psi_1,\ldots,\psi_{N_r}\}$ satisfying $\psi_i(B_j)=\delta_{ij}$,
$\psi_i\ge0$, and $\psi_i=0$ on $\partial\Omega_r$. After extension by zero,
each $\psi_i$ belongs to $V\cap C(\overline\Omega)$.

\medskip

\noindent\textbf{Discrete cones and operators.}\;
Define the scalar nodal cones
\begin{align*}
	&S_r^o
	:= \Bigl\{\,u_r=\textstyle\sum_i u_i\psi_i\in V_r : u_i>0\Bigr\},
	\\
	&S_r
	:= \overline{S_r^o}^{\,V_r}
	= \Bigl\{\,u_r=\textstyle\sum_i u_i\psi_i\in V_r : u_i\ge0\Bigr\}.
\end{align*}
Thus $S_r^o$ is an open cone in $V_r$ and $S_r$ is a closed polyhedral cone
with basis $\{\psi_1,\ldots,\psi_{N_r}\}$. For systems we set
\[
\mathcal{V}_r := V_r^m, \qquad
\mathcal{S}_r^o := (S_r^o)^m, \qquad
\mathcal{S}_r := (S_r)^m,
\]
with basis vectors $\Phi_{i,k}:=\psi_i e_k$, $i=1,\ldots,N_r$,
$k=1,\ldots,m$. Since every element of $\mathcal{S}_r$ is continuous,
componentwise nonnegative, and belongs to $\mathcal{V}$, we have
$\mathcal{S}_r\subset\mathcal{S}$.

\medskip

For $\mathbf{u}_r=\sum_{i,k}u_k^i\Phi_{i,k}\in\mathcal{V}_r$, write
$\bar{\mathbf{u}}:=(u_k^i)_{i,k}\in\mathbb{R}^{mN_r}$ for its coefficient
vector, with all inequalities understood componentwise. Then
\[
\mathbf{u}_r\in\mathcal{S}_r \Longleftrightarrow \bar{\mathbf{u}}\ge0,
\qquad
\mathbf{u}_r\in\mathcal{S}_r^o \Longleftrightarrow \bar{\mathbf{u}}>0.
\]

For $\mathbf{u}\in\mathcal{C}^0$, the componentwise nodal interpolant is
\[
\mathcal{I}_r\mathbf{u}
:= \sum_{i=1}^{N_r}\sum_{k=1}^m u_k(B_i)\Phi_{i,k}.
\]
Since every node $B_i$ lies in $\Omega$, interpolation preserves cone membership:
\begin{equation}\label{eq:positive-interpolation}
	\mathcal{I}_r(\mathcal{S})\subset\mathcal{S}_r,
	\qquad
	\mathcal{I}_r(\mathcal{S}^o)\subset\mathcal{S}_r^o.
\end{equation}

Define the stiffness and weight matrices by
\[
(L_r)_{(j,\ell),(i,k)}
:= \langle\mathcal{L}\Phi_{i,k},\Phi_{j,\ell}\rangle,
\qquad
(G_r)_{(j,\ell),(i,k)}
:= (\mathcal{G}\Phi_{i,k},\Phi_{j,\ell})_H,
\]
so that $\bar{\mathbf{v}}^T L_r\bar{\mathbf{u}}=\langle\mathcal{L}\mathbf{u}_r,
\mathbf{v}_r\rangle$ and $\bar{\mathbf{v}}^T G_r\bar{\mathbf{u}}=
(\mathcal{G}\mathbf{u}_r,\mathbf{v}_r)_H$. Whenever the denominator is positive,
the discrete extended Rayleigh quotient
\begin{equation}\label{eq:discrete-Rayleigh}
	\mathcal{R}_r(\mathbf{u}_r,\mathbf{v}_r)
	:= \frac{\bar{\mathbf{v}}^T L_r\bar{\mathbf{u}}}
	{\bar{\mathbf{v}}^T G_r\bar{\mathbf{u}}}
\end{equation}
is the restriction of $\mathcal{R}$ to $\mathcal{V}_r\times\mathcal{V}_r$.

\subsection{Minimax characterization}

We work under the following two assumptions.

\begin{description}
	\item[$(P_r)$] \emph{Strict positivity of the Galerkin weight.}
	\[
	\bar{\mathbf{v}}^T G_r\bar{\mathbf{u}}>0
	\qquad
	\forall\,(\mathbf{u}_r,\mathbf{v}_r)\in\mathcal{D}_r,
	\]
	where $\mathcal{D}_r:=(\mathcal{S}_r\setminus\{\mathbf{0}\})\times\mathcal{S}_r^o
	\cup\,\mathcal{S}_r^o\times(\mathcal{S}_r\setminus\{\mathbf{0}\})$.
	
	\item[$(B_r)$] \emph{Boundary exclusion.} For $M=L_r,L_r^T$,
	\[
	\bar{\mathbf{w}}\ge0,\quad
	\bar{\mathbf{w}}\ne0,\quad
	M\bar{\mathbf{w}}\ge0
	\quad\Longrightarrow\quad
	\bar{\mathbf{w}}>0.
	\]
\end{description}

Define the four mixed minimax levels
\begin{align}
	\overline\lambda_r
	&:= \sup_{\mathbf{u}\in\mathcal{S}_r\setminus\{\mathbf{0}\}}
	\inf_{\mathbf{v}\in\mathcal{S}_r^o}
	\mathcal{R}_r(\mathbf{u},\mathbf{v}),
	&
	\overline\Lambda_r
	&:= \inf_{\mathbf{v}\in\mathcal{S}_r^o}
	\sup_{\mathbf{u}\in\mathcal{S}_r\setminus\{\mathbf{0}\}}
	\mathcal{R}_r(\mathbf{u},\mathbf{v}),
	\label{eq:discrete-upper-minimax}\\
	\underline\Lambda_r
	&:= \sup_{\mathbf{u}\in\mathcal{S}_r^o}
	\inf_{\mathbf{v}\in\mathcal{S}_r\setminus\{\mathbf{0}\}}
	\mathcal{R}_r(\mathbf{u},\mathbf{v}),
	&
	\underline\lambda_r
	&:= \inf_{\mathbf{v}\in\mathcal{S}_r\setminus\{\mathbf{0}\}}
	\sup_{\mathbf{u}\in\mathcal{S}_r^o}
	\mathcal{R}_r(\mathbf{u},\mathbf{v}).
	\label{eq:discrete-lower-minimax}
\end{align}

\begin{lemma}[Discrete minimax principle]\label{lem:discrete-minimax}
	Assume $(P_r)$. Then both pairs of levels coincide and are finite:
	\[
	-\infty < \overline\lambda_r = \overline\Lambda_r < +\infty,
	\qquad
	-\infty < \underline\lambda_r = \underline\Lambda_r < +\infty.
	\]
	Moreover, there exist $\mathbf{u}_r,\mathbf{v}_r\in\mathcal{S}_r\setminus\{\mathbf{0}\}$
	such that
	\begin{equation}\label{eq:minmaxD2}
		\overline\lambda_r
		= \inf_{\mathbf{v}\in\mathcal{S}_r^o}\mathcal{R}_r(\mathbf{u}_r,\mathbf{v}),
		\qquad
		\underline\lambda_r
		= \sup_{\mathbf{u}\in\mathcal{S}_r^o}\mathcal{R}_r(\mathbf{u},\mathbf{v}_r).
	\end{equation}
\end{lemma}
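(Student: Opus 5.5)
The plan is to reduce both identities to Sion's minimax theorem. The quotient $\mathcal R_r$ is homogeneous of degree zero in each variable, so the variable ranging over the closed cone can be normalized to a compact simplex. Attainment then follows from upper (resp.\ lower) semicontinuity on that compact set. I treat the pair $\overline\lambda_r=\overline\Lambda_r$ in detail; the pair $\underline\lambda_r=\underline\Lambda_r$ follows by the same argument with the roles of $\mathbf u$ and $\mathbf v$ (and of $L_r,G_r$ and $L_r^T,G_r^T$) exchanged.

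\emph{Step 1 (normalization and continuity).} Let $\Delta_r:=\{\bar{\mathbf u}\ge0:\ \sum\bar u_{k}^i=1\}$, a compact convex set. By homogeneity, $\mathbf u\in\mathcal S_r\setminus\{\mathbf 0\}$ may be replaced by its normalization in $\Delta_r$. By $(P_r)$ the denominator $\bar{\mathbf v}^TG_r\bar{\mathbf u}$ is strictly positive on $\Delta_r\times\mathcal S_r^o$. Hence $f(\mathbf u,\mathbf v):=\mathcal R_r(\mathbf u,\mathbf v)$ is a real-valued continuous function on $\Delta_r\times\mathcal S_r^o$, with $\mathcal S_r^o$ convex.

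\emph{Step 2 (quasi-concavity/quasi-convexity).} Fix $\mathbf v\in\mathcal S_r^o$ and $\lambda\in\mathbb R$. Because the denominator is positive, the superlevel set $\{\mathbf u\in\Delta_r: f(\mathbf u,\mathbf v)\ge\lambda\}$ equals $\{\mathbf u\in\Delta_r:\bar{\mathbf v}^T(L_r-\lambda G_r)\bar{\mathbf u}\ge0\}$. This is the intersection of $\Delta_r$ with a half-space, hence convex, so $f(\cdot,\mathbf v)$ is quasi-concave. In the same way, for fixed $\mathbf u\in\Delta_r$ the sublevel set $\{\mathbf v\in\mathcal S_r^o:\bar{\mathbf v}^T(L_r-\lambda G_r)\bar{\mathbf u}\le0\}$ is convex, so $f(\mathbf u,\cdot)$ is quasi-convex. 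Sion's theorem then applies, with $\Delta_r$ compact convex and $\mathcal S_r^o$ convex, and gives $\sup_{\Delta_r}\inf_{\mathcal S_r^o}f=\inf_{\mathcal S_r^o}\sup_{\Delta_r}f$, that is, $\overline\lambda_r=\overline\Lambda_r$.

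\emph{Step 3 (finiteness).} For the upper bound, fix any $\mathbf v\in\mathcal S_r^o$. Then $\sup_{\Delta_r}f(\cdot,\mathbf v)$ is the maximum of a continuous function on a compact set, hence finite, and $\overline\Lambda_r<\infty$. For the lower bound, take $\mathbf u=\sum_{i,k}\Phi_{i,k}\in\mathcal S_r^o$. Testing $(P_r)$ against $\mathbf v=\Phi_{j,\ell}\in\mathcal S_r\setminus\{\mathbf 0\}$ gives $G_r\bar{\mathbf u}>0$ componentwise. Since a ratio of linear forms with positive denominator over the positive orthant lies between the smallest and largest coordinate ratios, $\inf_{\mathbf v>0}\mathcal R_r(\mathbf u,\mathbf v)\ge\min_{j}(L_r\bar{\mathbf u})_j/(G_r\bar{\mathbf u})_j>-\infty$. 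Therefore $\overline\lambda_r>-\infty$.

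\emph{Step 4 (attainment).} The map $\mathbf u\mapsto\inf_{\mathbf v\in\mathcal S_r^o}f(\mathbf u,\mathbf v)$ is an infimum of continuous functions on $\Delta_r$. It is therefore upper semicontinuous, with values in $[-\infty,\infty)$, and its supremum $\overline\lambda_r$ is finite. An upper semicontinuous function on a compact set attains its supremum, which yields $\mathbf u_r\in\Delta_r\subset\mathcal S_r\setminus\{\mathbf 0\}$ realizing the first identity in \eqref{eq:minmaxD2}. The symmetric argument, with $\mathbf v$ normalized to $\Delta_r$ and $\mathbf u\in\mathcal S_r^o$, uses lower semicontinuity of $\mathbf v\mapsto\sup_{\mathbf u\in\mathcal S_r^o}f(\mathbf u,\mathbf v)$ and produces $\mathbf v_r$.

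The main point needing care is the application of Sion's theorem. The inner infimum for boundary $\mathbf u$ may equal $-\infty$, and the $\mathbf v$-set $\mathcal S_r^o$ is not compact. I would check that the version used needs compactness only on the $\mathbf u$-side and real values on $\Delta_r\times\mathcal S_r^o$; Step 1 guarantees the latter. Possible $-\infty$ values of the inner infimum only affect attainment, and Step 4 handles them through upper semicontinuity. Note that $(B_r)$ is not used in this lemma.
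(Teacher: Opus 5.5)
Your proposal is correct and follows the paper's proof essentially step for step. Both normalize the closed-cone variable to the simplex, use quasi-concavity and quasi-convexity of the linear-fractional quotient to apply Sion's theorem to both pairs of levels, obtain finiteness from a fixed strictly positive test vector, and get attainment from upper and lower semicontinuity on the compact simplex. Your deviations are cosmetic: you keep the inner variable in the unnormalized open cone $\mathcal S_r^o$ rather than $K_r^o$, you use the componentwise Collatz--Wielandt ratio for the lower bound, and you state explicitly a point the paper leaves implicit, namely that Sion needs compactness only on the normalized side, where the quotient is real-valued by $(P_r)$.
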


\begin{proof}
	By homogeneity, all variables may be normalized to the compact convex set
	\[
	K_r := \Bigl\{\mathbf{u}\in\mathcal{S}_r :
	\textstyle\sum_{i,k}u_k^i=1\Bigr\},
	\qquad K_r^o := K_r\cap\mathcal{S}_r^o,
	\]
	where $K_r^o$ is convex and dense in $K_r$. By $(P_r)$, the denominator of
	$\mathcal{R}_r$ is positive on $K_r\times K_r^o$ and $K_r^o\times K_r$.
	
	The quotient is continuous and linear-fractional in each variable;
	hence it is quasiconcave in its first variable and quasiconvex in its
	second. Sion's minimax theorem~\cite{Sion1958} therefore gives
	\[
	\overline\lambda_r=\overline\Lambda_r.
	\]
	Applying it to
	\((\mathbf v,\mathbf u)\mapsto-\mathcal R_r(\mathbf u,\mathbf v)\)
	gives
	\[
	\underline\lambda_r=\underline\Lambda_r.
	\]
	Finiteness follows because, for any fixed
	$\mathbf{u}^0,\mathbf{v}^0\in K_r^o$, the denominators
	$\bar{\mathbf{v}}^T G_r\bar{\mathbf{u}}^0$ and
	$(\bar{\mathbf{v}}^0)^T G_r\bar{\mathbf{u}}$ are bounded away from zero on
	$K_r$. Finally, the maps
	$\mathbf{u}\mapsto\inf_{\mathbf{v}\in K_r^o}\mathcal{R}_r(\mathbf{u},\mathbf{v})$
	and $\mathbf{v}\mapsto\sup_{\mathbf{u}\in K_r^o}\mathcal{R}_r(\mathbf{u},\mathbf{v})$
	are upper and lower semicontinuous on the compact set $K_r$, respectively, so
	their extrema are attained, proving~\eqref{eq:minmaxD2}.
\end{proof}

\begin{lemma}[Discrete minimax collapse]\label{lem:discrete-collapse}
	Assume $(P_r)$ and $(B_r)$, and let $\mathbf{u}_r,\mathbf{v}_r$ be as in
	Lemma~\ref{lem:discrete-minimax}. If $\overline\lambda_r\ge0$, then
	$\mathbf{u}_r,\mathbf{v}_r\in\mathcal{S}_r^o$,
	\[
	\hat\lambda_r := \overline\lambda_r = \underline\lambda_r,
	\qquad
	\mathcal{R}_r(\mathbf{u}_r,\mathbf{v}_r) = \hat\lambda_r,
	\]
	and the pair $(\mathbf{u}_r,\mathbf{v}_r)$ satisfies the discrete eigenvalue
	equations
	\begin{equation}\label{eq:uravn}
		L_r\bar{\mathbf{u}}_r = \hat\lambda_r G_r\bar{\mathbf{u}}_r,
		\qquad
		L_r^T\bar{\mathbf{v}}_r = \hat\lambda_r G_r^T\bar{\mathbf{v}}_r.
	\end{equation}
\end{lemma}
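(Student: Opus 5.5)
The plan is to translate each extremal relation of Lemma~\ref{lem:discrete-minimax} into a componentwise vector inequality, and then combine these with $(P_r)$ and $(B_r)$. Write $\lambda:=\overline\lambda_r$.

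First, $\inf_{\mathbf v\in\mathcal S_r^o}\mathcal R_r(\mathbf u_r,\mathbf v)=\lambda$ means $\bar{\mathbf v}^T(L_r-\lambda G_r)\bar{\mathbf u}_r\ge0$ for every $\bar{\mathbf v}>0$, because the denominators are positive by $(P_r)$. Letting $\bar{\mathbf v}$ tend to the unit vectors gives the componentwise inequality
\[
\mathbf w:=(L_r-\lambda G_r)\bar{\mathbf u}_r\ge0 .
\]
Similarly, $(P_r)$ gives $\bar{\mathbf v}^TG_r\bar{\mathbf u}_r>0$ for all $\bar{\mathbf v}>0$. Hence $G_r\bar{\mathbf u}_r\ge0$, and in the same way $G_r^T\bar{\mathbf v}\ge0$ for every $\bar{\mathbf v}\ge0$. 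Since $\lambda\ge0$, we get $L_r\bar{\mathbf u}_r\ge\lambda G_r\bar{\mathbf u}_r\ge0$. Then $(B_r)$ with $M=L_r$ yields $\bar{\mathbf u}_r>0$, that is, $\mathbf u_r\in\mathcal S_r^o$.

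Second, the two levels are compared in both directions. Since $\mathbf u_r$ is interior, the argument of Corollary~\ref{cor:interior-eigenvalue-collapse} applies: $\mathcal R_r(\mathbf u_r,\psi)\ge\lambda$ for all $\psi\in\mathcal S_r\setminus\{0\}$, by density and continuity of the denominator. Hence $\underline\lambda_r(\psi)\ge\lambda$ for every such $\psi$, so $\underline\lambda_r\ge\overline\lambda_r$. For the reverse inequality I use the equalities of Lemma~\ref{lem:discrete-minimax}. For any $\mathbf u^0,\mathbf v^0\in\mathcal S_r^o$,
\[
\inf_{\mathbf v\in\mathcal S_r\setminus\{0\}}\mathcal R_r(\mathbf u^0,\mathbf v)\le\mathcal R_r(\mathbf u^0,\mathbf v^0)\le\sup_{\mathbf u\in\mathcal S_r\setminus\{0\}}\mathcal R_r(\mathbf u,\mathbf v^0).
\]
Taking the supremum over $\mathbf u^0$ and the infimum over $\mathbf v^0$ gives $\underline\lambda_r=\underline\Lambda_r\le\overline\Lambda_r=\overline\lambda_r$. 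Therefore $\hat\lambda_r:=\overline\lambda_r=\underline\lambda_r$.

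Third, I extract the eigen-equations. The relation $\sup_{\mathbf u\in\mathcal S_r^o}\mathcal R_r(\mathbf u,\mathbf v_r)=\hat\lambda_r$ gives $\bar{\mathbf u}^T(L_r^T-\hat\lambda_rG_r^T)\bar{\mathbf v}_r\le0$ for all $\bar{\mathbf u}>0$. Hence
\[
\mathbf z:=(L_r^T-\hat\lambda_rG_r^T)\bar{\mathbf v}_r\le0 .
\]
On the other hand, $\bar{\mathbf u}_r^T\mathbf z=\bar{\mathbf v}_r^T\mathbf w\ge0$, because $\mathbf w\ge0$ and $\bar{\mathbf v}_r\ge0$. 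Since $\bar{\mathbf u}_r>0$ and $\mathbf z\le0$, this forces $\mathbf z=0$, which is the left equation in \eqref{eq:uravn}. Then $L_r^T\bar{\mathbf v}_r=\hat\lambda_rG_r^T\bar{\mathbf v}_r\ge0$, using $\hat\lambda_r\ge0$ and $G_r^T\bar{\mathbf v}_r\ge0$. Now $(B_r)$ with $M=L_r^T$ gives $\bar{\mathbf v}_r>0$. Finally, $\bar{\mathbf v}_r^T\mathbf w=\bar{\mathbf u}_r^T\mathbf z=0$ with $\bar{\mathbf v}_r>0$ and $\mathbf w\ge0$ forces $\mathbf w=0$, which is the right equation in \eqref{eq:uravn}. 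Dividing by $\bar{\mathbf v}_r^TG_r\bar{\mathbf u}_r>0$ gives $\mathcal R_r(\mathbf u_r,\mathbf v_r)=\hat\lambda_r$.

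The main obstacle is the left vector $\mathbf v_r$. Its extremal relation only gives $L_r^T\bar{\mathbf v}_r\le\hat\lambda_rG_r^T\bar{\mathbf v}_r$, which points the wrong way for $(B_r)$. The key device is to first establish interiority of $\mathbf u_r$. Pairing with $\bar{\mathbf u}_r>0$ then upgrades the one-sided inequality to an equality before $(B_r)$ is applied to $L_r^T$. The hypothesis $\overline\lambda_r\ge0$ is used exactly where $\lambda G_r\bar{\mathbf u}_r\ge0$ and $\hat\lambda_rG_r^T\bar{\mathbf v}_r\ge0$ are needed.
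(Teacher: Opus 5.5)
Your proof is correct and follows the paper's structure. You first obtain interiority of $\mathbf u_r$ from $(B_r)$ and $\overline\lambda_r\ge0$, then collapse the levels using the interior $\mathbf u_r$ together with the equalities of Lemma~\ref{lem:discrete-minimax}, then derive the left equation, interiority of $\mathbf v_r$ via $(B_r)$ for $L_r^T$, and finally the right equation. The only difference is that the paper differentiates $\mathcal R_r(\cdot,\mathbf v_r)$ and $\mathcal R_r(\mathbf u_r,\cdot)$ at the interior extremizers, whereas you use $\bar{\mathbf u}_r^T\mathbf z=\bar{\mathbf v}_r^T\mathbf w$ with $\mathbf z\le0\le\mathbf w$, which is an equivalent and slightly more elementary way to get the same first-order conditions.
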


\begin{proof}
	We first prove that $\mathbf{u}_r$ is strictly positive. By
	\eqref{eq:minmaxD2},
	$\mathcal{R}_r(\mathbf{u}_r,\mathbf{v})\ge\overline\lambda_r$
	for all $\mathbf{v}\in\mathcal{S}_r^o$. Since the denominator is positive,
	\[
	\bar{\mathbf v}^{T}(L_r-\overline\lambda_r G_r)\bar{\mathbf u}_r\ge0
	\qquad \forall\,\mathbf v\in\mathcal S_r^o,
	\]
	and hence $(L_r-\overline\lambda_r G_r)\bar{\mathbf{u}}_r\ge0$ componentwise.
	Also, $(P_r)$ gives $G_r\bar{\mathbf{u}}_r\ge0$: indeed,
	$\bar{\mathbf v}^{T}G_r\bar{\mathbf u}_r>0$ for all
	$\mathbf v\in\mathcal S_r^o$, and passage to boundary directions yields
	componentwise nonnegativity. Since $\overline\lambda_r\ge0$, it follows that
	$L_r\bar{\mathbf{u}}_r\ge0$. Therefore $(B_r)$ gives
	$\bar{\mathbf{u}}_r>0$, that is, $\mathbf{u}_r\in\mathcal{S}_r^o$.
	
	Since $\mathbf{u}_r\in\mathcal{S}_r^o$, $(P_r)$ keeps the denominator positive
	for $\mathbf v\in\mathcal S_r\setminus\{\mathbf0\}$, and the infimum in
	\eqref{eq:minmaxD2} extends by continuity from $\mathcal{S}_r^o$ to
	$\mathcal{S}_r\setminus\{\mathbf{0}\}$. Thus
	$\underline\Lambda_r\ge\overline\lambda_r$. Conversely, for every
	$\mathbf u\in\mathcal{S}_r^o$,
	\[
	\inf_{\mathbf v\in\mathcal{S}_r\setminus\{\mathbf0\}}
	\mathcal R_r(\mathbf u,\mathbf v)
	\le
	\inf_{\mathbf v\in\mathcal{S}_r^o}
	\mathcal R_r(\mathbf u,\mathbf v)
	\le
	\overline\lambda_r,
	\]
	and therefore $\underline\Lambda_r\le\overline\lambda_r$. Hence
	\[
	\underline\lambda_r=\underline\Lambda_r=\overline\lambda_r=\overline\Lambda_r=:\hat\lambda_r.
	\]
	
	It remains to identify the saddle value and prove strict positivity of
	$\mathbf{v}_r$. From \eqref{eq:minmaxD2},
	$\mathcal{R}_r(\mathbf{u},\mathbf{v}_r)\le\hat\lambda_r$ for all
	$\mathbf{u}\in\mathcal{S}_r^o$, while the extended infimum gives
	$\mathcal{R}_r(\mathbf{u}_r,\mathbf{v}_r)\ge\hat\lambda_r$. Since
	$\mathbf{u}_r\in\mathcal{S}_r^o$, the first inequality applies to
	$\mathbf{u}_r$ as well, and therefore
	\[
	\mathcal{R}_r(\mathbf{u}_r,\mathbf{v}_r)=\hat\lambda_r.
	\]
	
	Now $\mathcal{R}_r(\mathbf{u},\mathbf{v}_r)\le\hat\lambda_r$ for all
	$\mathbf u\in\mathcal S_r^o$, with equality at the strictly positive point
	$\mathbf u_r$. Hence, for every coefficient direction $\boldsymbol\xi$ and
	all sufficiently small \(t\), the vector
	\(\bar{\mathbf u}_r+t\boldsymbol\xi\) remains positive, and differentiation
	at \(t=0\) gives
	\[
	L_r^T\bar{\mathbf{v}}_r=\hat\lambda_r G_r^T\bar{\mathbf{v}}_r.
	\]
	By $(P_r)$, $G_r^T\bar{\mathbf{v}}_r\ge0$: indeed,
	$\bar{\mathbf v}_r^T G_r\bar{\mathbf u}>0$ for all
	$\mathbf u\in\mathcal S_r^o$, and passage to boundary directions yields
	componentwise nonnegativity. Since $\hat\lambda_r\ge0$, this implies
	$L_r^T\bar{\mathbf{v}}_r\ge0$. Applying $(B_r)$ to $L_r^T$ yields
	$\bar{\mathbf{v}}_r>0$, that is, $\mathbf{v}_r\in\mathcal{S}_r^o$.
	
	Finally, since $\mathbf{v}_r$ is also strictly positive and
	$\mathcal{R}_r(\mathbf{u}_r,\mathbf{v})\ge\hat\lambda_r$ for all
	$\mathbf v\in\mathcal S_r^o$, with equality at $\mathbf v_r$, differentiating
	with respect to arbitrary coefficient directions gives
	\[
	L_r\bar{\mathbf{u}}_r=\hat\lambda_r G_r\bar{\mathbf{u}}_r.
	\]
	This proves \eqref{eq:uravn}.
\end{proof}

\section{Existence of a left cone quasi-eigenvector}
\label{sec:left-minimax-attainment}

We first treat the lower/left level separately, because this part of the
construction admits a compactness argument under weaker assumptions and will
be used later in identifying the Galerkin limit. Throughout this section we
use the standing compactness assumptions that $\mathcal V$ is reflexive, the
embedding $\mathcal V\hookrightarrow H$ is compact, and $\mathcal S$ is weakly
closed in $\mathcal V$.

We assume the uniform ellipticity condition: there exists $\theta>0$ such that,
for a.e.\ $x\in\Omega$,
\begin{equation}\label{eq:unifellipt}
	\sum_{i,j=1}^n
	\langle A_{ij}(x)\xi_i,\xi_j\rangle_{\mathbb{R}^m}
	\ge \theta|\xi|^2
	\qquad
	\forall\,\xi=(\xi_1,\ldots,\xi_n)\in(\mathbb{R}^m)^n.
\end{equation}
This implies the G{\aa}rding inequality: there exist $c>0$ and $C\ge0$ such that
\[
\langle\mathcal{L}\mathbf{u},\mathbf{u}\rangle
\ge c\|\mathbf{u}\|_{\mathcal{V}}^2 - C\|\mathbf{u}\|_H^2
\qquad
\forall\,\mathbf{u}\in\mathcal{V}.
\]
Recall that for $\mathbf{v}\in\mathcal{S}\setminus\{\mathbf{0}\}$ the lower
one-sided level and the lower principal level are
\[
\underline\lambda(\mathbf{v})
:= \sup_{\mathbf{u}\in\mathcal{S}^o}\mathcal{R}(\mathbf{u},\mathbf{v}),
\qquad
\underline\lambda
:= \inf_{\mathbf{v}\in\mathcal{S}\setminus\{\mathbf{0}\}}\underline\lambda(\mathbf{v}).
\]

\begin{lemma}[Compactness of normalized left sublevels]
	\label{lem:left-sublevel-compactness}
	Assume $\mathrm{(P)}$ and \eqref{eq:unifellipt}. Let
	$(\mathbf{v}_j)\subset\mathcal{S}$ and $(\mu_j)\subset\mathbb{R}$ satisfy
	\[
	\|\mathbf{v}_j\|_H=1,
	\qquad \sup_j\mu_j<+\infty,
	\qquad
	\langle\mathcal{L}\mathbf{v}_j,\mathbf{v}_j\rangle
	\le \mu_j(\mathcal{G}\mathbf{v}_j,\mathbf{v}_j)_H.
	\]
	Then $(\mathbf{v}_j)$ is bounded in $\mathcal{V}$, and up to a subsequence
	\[
	\mathbf{v}_j\rightharpoonup\mathbf{v}
	\quad\text{weakly in }\mathcal{V},
	\qquad
	\mathbf{v}_j\to\mathbf{v}
	\quad\text{strongly in }H,
	\]
	for some $\mathbf{v}\in\mathcal{S}$ with $\|\mathbf{v}\|_H=1$.
\end{lemma}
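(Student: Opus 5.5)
The plan is to combine the G\aa rding inequality with boundedness of $\mathcal G$ to get a uniform $\mathcal V$-bound, and then use reflexivity, the compact embedding and weak closedness of $\mathcal S$ to extract the limit.

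\emph{Step 1: uniform $\mathcal V$-bound.} Set $\mu:=\sup_j\mu_j<+\infty$. Since $\mathcal G:H\to H$ is bounded and $\|\mathbf v_j\|_H=1$, we have $|(\mathcal G\mathbf v_j,\mathbf v_j)_H|\le\|\mathcal G\|$. Hence
\[
\mu_j(\mathcal G\mathbf v_j,\mathbf v_j)_H\le |\mu_j|\,\|\mathcal G\|
\]
if $\mu_j$ is bounded below as well. The hypothesis only gives an upper bound on $\mu_j$, and this is the one delicate point. I would handle it through the sign of the weight. Since $\mathbf v_j\in\mathcal S$ and $\mathcal S^o$ is dense in $\mathcal S\setminus\{0\}$, condition $(P)$ and a limit give $(\mathcal G\mathbf v_j,\mathbf v_j)_H\ge0$. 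Then
\[
\mu_j(\mathcal G\mathbf v_j,\mathbf v_j)_H\le \mu^+(\mathcal G\mathbf v_j,\mathbf v_j)_H\le \mu^+\|\mathcal G\|,
\qquad \mu^+:=\max(\mu,0).
\]
Combining this with the G\aa rding inequality yields
\[
c\|\mathbf v_j\|_{\mathcal V}^2-C\le\langle\mathcal L\mathbf v_j,\mathbf v_j\rangle\le\mu^+\|\mathcal G\|,
\]
so $\|\mathbf v_j\|_{\mathcal V}^2\le (C+\mu^+\|\mathcal G\|)/c$ uniformly in $j$.

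\emph{Step 2: extraction.} $\mathcal V$ is reflexive, so a subsequence converges weakly in $\mathcal V$ to some $\mathbf v$. The embedding $\mathcal V\hookrightarrow H$ is compact, so it maps weakly convergent sequences to strongly convergent ones. Hence $\mathbf v_j\to\mathbf v$ in $H$, and $\|\mathbf v\|_H=\lim\|\mathbf v_j\|_H=1$; in particular $\mathbf v\neq0$. Since $\mathcal S$ is weakly closed in $\mathcal V$ (a standing assumption of this section), $\mathbf v\in\mathcal S$.

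\emph{Main obstacle.} The expected obstacle is justifying $(\mathcal G\mathbf v_j,\mathbf v_j)_H\ge0$ on the diagonal, since $(P)$ is stated only on mixed pairs in $\mathcal D$. If $\mathbf v_j=0$ the claim is trivial, but $\|\mathbf v_j\|_H=1$ excludes this anyway. For $\mathbf v_j\neq0$, choose $\mathbf w_n\in\mathcal S^o$ with $\mathbf w_n\to\mathbf v_j$ in $\mathcal V$, hence in $H$. Then $(\mathbf v_j,\mathbf w_n)\in\mathcal D$, so $(\mathcal G\mathbf v_j,\mathbf w_n)_H>0$, and letting $n\to\infty$ gives $(\mathcal G\mathbf v_j,\mathbf v_j)_H\ge0$ by continuity of the $H$ inner product. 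This also shows that the normalization uses only the upper bound on $\mu_j$, as the statement intends.
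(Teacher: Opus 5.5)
Your proposal is correct and follows the paper's proof essentially step by step. Both arguments use density of $\mathcal S^o$ together with $(P)$ to get $(\mathcal G\mathbf v_j,\mathbf v_j)_H\ge0$, which gives $\langle\mathcal L\mathbf v_j,\mathbf v_j\rangle\le\max\{\sup_j\mu_j,0\}\,\|\mathcal G\|$ and hence a $\mathcal V$-bound via G{\aa}rding; the subsequence and its limit in $\mathcal S$ then come from reflexivity, the compact embedding, and weak closedness of $\mathcal S$.
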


\begin{proof}
	Let $C_G>0$ be such that
	\[
	|(\mathcal G\mathbf u,\mathbf w)_H|
	\le C_G\|\mathbf u\|_H\|\mathbf w\|_H
	\qquad \forall\,\mathbf u,\mathbf w\in H.
	\]
	By the density of $\mathcal{S}^o$ in $\mathcal{S}\setminus\{\mathbf{0}\}$,
	for each $j$ we may choose $\mathbf w_{j,n}\in\mathcal S^o$ such that
	$\mathbf w_{j,n}\to\mathbf v_j$ in $\mathcal V$. Since
	$\mathbf v_j\in\mathcal S\setminus\{\mathbf0\}$ and $\mathrm{(P)}$ gives
	$(\mathcal G\mathbf v_j,\mathbf w_{j,n})_H>0$, passage to the limit yields
	$(\mathcal G\mathbf v_j,\mathbf v_j)_H\ge0$. Together with the sublevel
	inequality and $\|\mathbf{v}_j\|_H=1$, this gives
	\[
	\langle\mathcal{L}\mathbf{v}_j,\mathbf{v}_j\rangle
	\le \max\bigl\{\sup_j\mu_j,0\bigr\}\,C_G,
	\]
	so the G{\aa}rding inequality bounds $(\mathbf{v}_j)$ in $\mathcal{V}$.
	The remaining assertions follow from the reflexivity of $\mathcal{V}$, the
	compact embedding $\mathcal{V}\hookrightarrow H$, and the weak closedness of
	$\mathcal{S}$.
\end{proof}

\begin{theorem}[Attainment of the lower principal level]
	\label{thm:general-left-attainment}
	Assume $\mathrm{(P)}$ and \eqref{eq:unifellipt}. If $\underline\lambda<+\infty$,
	then $\underline\lambda\in\mathbb{R}$ and there exists
	$\mathbf{v}^*\in\mathcal{S}\setminus\{\mathbf{0}\}$ with $\|\mathbf{v}^*\|_H=1$
	such that $\underline\lambda(\mathbf{v}^*)=\underline\lambda$ and
	\begin{equation}\label{eq:continuous-left-quasi}
		\langle\mathcal{L}\mathbf{u},\mathbf{v}^*\rangle
		\le \underline\lambda\,(\mathcal{G}\mathbf{u},\mathbf{v}^*)_H
		\qquad \forall\,\mathbf{u}\in\mathcal{S}.
	\end{equation}
\end{theorem}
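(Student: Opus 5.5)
We argue by the direct method, taking a minimizing sequence for $\underline\lambda$ and passing to the limit with Lemma~\ref{lem:left-sublevel-compactness}.

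\textbf{Step 1: a minimizing sequence and its compactness.} Since $\underline\lambda<+\infty$, choose $\mathbf v_j\in\mathcal S\setminus\{\mathbf0\}$ with $\underline\lambda(\mathbf v_j)\downarrow\underline\lambda$. Every admissible quantity is homogeneous in $\mathbf v$, so we normalize $\|\mathbf v_j\|_H=1$, and we set $\mu_j:=\underline\lambda(\mathbf v_j)$. Then $\sup_j\mu_j<+\infty$ after discarding finitely many terms.

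For each $j$ and every $\mathbf u\in\mathcal S^o$ the pair $(\mathbf u,\mathbf v_j)$ lies in $\mathcal D$, so $(P)$ makes the denominator positive. We therefore have
\[
\langle\mathcal L\mathbf u,\mathbf v_j\rangle\le\mu_j(\mathcal G\mathbf u,\mathbf v_j)_H
\qquad\forall\,\mathbf u\in\mathcal S^o .
\]
Both sides are continuous in $\mathbf u\in\mathcal V$. By density of $\mathcal S^o$ in $\mathcal S\setminus\{\mathbf0\}$, this inequality extends to all $\mathbf u\in\mathcal S$, and in particular to $\mathbf u=\mathbf v_j$. Lemma~\ref{lem:left-sublevel-compactness} now applies. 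Up to a subsequence, $\mathbf v_j\rightharpoonup\mathbf v^*$ weakly in $\mathcal V$ and $\mathbf v_j\to\mathbf v^*$ strongly in $H$, with $\mathbf v^*\in\mathcal S$ and $\|\mathbf v^*\|_H=1$.

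\textbf{Step 2: finiteness of $\underline\lambda$.} This is the step I expect to be the main obstacle. We must exclude $\mu_j\to-\infty$. Take the extended inequality at $\mathbf u=\mathbf v_j$ and apply the Gårding inequality:
\[
c\|\mathbf v_j\|_{\mathcal V}^2-C\le\langle\mathcal L\mathbf v_j,\mathbf v_j\rangle\le\mu_j(\mathcal G\mathbf v_j,\mathbf v_j)_H .
\]
This gives $\mu_j\ge -C/(\mathcal G\mathbf v_j,\mathbf v_j)_H$. So it suffices to bound $(\mathcal G\mathbf v_j,\mathbf v_j)_H$ away from $0$. Since $\mathcal G$ is bounded and $\mathbf v_j\to\mathbf v^*$ in $H$, this quantity tends to $(\mathcal G\mathbf v^*,\mathbf v^*)_H\ge0$. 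However, $(P)$ only guarantees strict positivity on mixed pairs, and $\mathbf v^*$ may lie on the boundary of the cone.

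I would therefore avoid the diagonal. Fix one $\mathbf u_0\in\mathcal S^o$ and test the inequality with it:
\[
\mu_j(\mathcal G\mathbf u_0,\mathbf v_j)_H\ge\langle\mathcal L\mathbf u_0,\mathbf v_j\rangle .
\]
Here $(\mathcal G\mathbf u_0,\mathbf v_j)_H\to(\mathcal G\mathbf u_0,\mathbf v^*)_H>0$ by $(P)$, since $\mathbf v^*\ne\mathbf0$ and $(\mathbf u_0,\mathbf v^*)\in\mathcal D$. The right-hand side stays bounded because $(\mathbf v_j)$ is bounded in $\mathcal V$. Hence $\mu_j$ is bounded below. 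This needs the $\mathcal V$-boundedness of Step~1, which holds even if the $\mu_j$ are unbounded below, since Lemma~\ref{lem:left-sublevel-compactness} only uses $\sup_j\mu_j<\infty$. We conclude $\underline\lambda=\lim\mu_j\in\mathbb R$.

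\textbf{Step 3: passage to the limit.} Fix $\mathbf u\in\mathcal S$. Since $\mathcal L\mathbf u\in\mathcal V^*$ and $\mathbf v_j\rightharpoonup\mathbf v^*$ weakly in $\mathcal V$, we get $\langle\mathcal L\mathbf u,\mathbf v_j\rangle\to\langle\mathcal L\mathbf u,\mathbf v^*\rangle$. Strong convergence in $H$ together with $\mu_j\to\underline\lambda$ gives $\mu_j(\mathcal G\mathbf u,\mathbf v_j)_H\to\underline\lambda(\mathcal G\mathbf u,\mathbf v^*)_H$. Letting $j\to\infty$ in the extended inequality of Step~1 therefore yields \eqref{eq:continuous-left-quasi}.

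\textbf{Step 4: $\mathbf v^*$ realizes the level.} For $\mathbf u\in\mathcal S^o$ the denominator $(\mathcal G\mathbf u,\mathbf v^*)_H$ is positive by $(P)$. Dividing \eqref{eq:continuous-left-quasi} by it gives $\mathcal R(\mathbf u,\mathbf v^*)\le\underline\lambda$, hence $\underline\lambda(\mathbf v^*)\le\underline\lambda$. The reverse inequality $\underline\lambda(\mathbf v^*)\ge\underline\lambda$ holds by the definition of $\underline\lambda$ as an infimum. Therefore $\underline\lambda(\mathbf v^*)=\underline\lambda$.
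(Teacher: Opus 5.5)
Your proposal is correct and follows essentially the same argument as the paper. Both take an $H$-normalized minimizing sequence, extend the sublevel inequality to $\mathcal S$ by density, apply Lemma~\ref{lem:left-sublevel-compactness}, rule out $\underline\lambda=-\infty$ by testing against a fixed $\mathbf u_0\in\mathcal S^o$ (using $(\mathcal G\mathbf u_0,\mathbf v^*)_H>0$), and pass to the limit. The only difference is that you extend to all of $\mathcal S$ before taking the limit rather than afterwards, and this changes nothing.
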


\begin{proof}
	By homogeneity, there exists a minimizing sequence
	$(\mathbf{v}_k)\subset\mathcal{S}$ with $\|\mathbf{v}_k\|_H=1$ and finite
	values $\lambda_k:=\underline\lambda(\mathbf{v}_k)$ such that
	$\lambda_k\to\underline\lambda$. The sequence $(\lambda_k)$ is bounded above.
	For each fixed $k$, the definition of $\lambda_k$ gives
	\[
	\langle\mathcal{L}\mathbf{u},\mathbf{v}_k\rangle
	\le \lambda_k(\mathcal{G}\mathbf{u},\mathbf{v}_k)_H
	\qquad \forall\,\mathbf{u}\in\mathcal{S}^o.
	\]
	By density and continuity, this inequality extends to all
	$\mathbf{u}\in\mathcal{S}$. Taking $\mathbf{u}=\mathbf{v}_k$ and applying
	Lemma~\ref{lem:left-sublevel-compactness}, we extract a subsequence with
	$\mathbf{v}_k\rightharpoonup\mathbf{v}^*$ weakly in $\mathcal{V}$ and
	$\mathbf{v}_k\to\mathbf{v}^*$ strongly in $H$, where
	$\mathbf{v}^*\in\mathcal{S}$ and $\|\mathbf{v}^*\|_H=1$.
	
	If $\underline\lambda=-\infty$, then for any fixed $\mathbf{u}_0\in\mathcal{S}^o$,
	\[
	\lambda_k \ge \mathcal{R}(\mathbf{u}_0,\mathbf{v}_k)
	\longrightarrow \mathcal{R}(\mathbf{u}_0,\mathbf{v}^*)\in\mathbb{R},
	\]
	a contradiction. Hence $\underline\lambda\in\mathbb{R}$.
	
	Passing to the limit in the sublevel inequality gives
	\[
	\langle\mathcal{L}\mathbf{u},\mathbf{v}^*\rangle
	\le \underline\lambda(\mathcal{G}\mathbf{u},\mathbf{v}^*)_H
	\qquad \forall\,\mathbf{u}\in\mathcal{S}^o.
	\]
	Dividing by the positive denominator gives
	$\underline\lambda(\mathbf{v}^*)\le\underline\lambda$. The reverse inequality
	holds by definition, so $\underline\lambda(\mathbf{v}^*)=\underline\lambda$.
	Density and continuity then extend the inequality to all
	$\mathbf{u}\in\mathcal{S}$, proving \eqref{eq:continuous-left-quasi}.
\end{proof}

\section{Continuous minimax limit}
\label{sec:pde}

Under the discrete positivity and boundary-exclusion assumptions, the
finite-dimensional theory yields exact positive right-left eigenpairs and
exact minimax identities on each discrete cone. We now pass to the Galerkin
limit. The proof shows that every convergent subsequence of discrete minimax
values produces a nontrivial right-left eigenpair of the continuous pencil;
strong positivity is then used to identify this limit with the continuous cone
minimax value.

Throughout this section we assume that \(\mathcal V\) is reflexive, the
embedding \(\mathcal V\hookrightarrow H\) is compact, and \(\mathcal S\) is
weakly closed in \(\mathcal V\). Introduce the dual cone
\[
\mathcal S^*
:=
\{\mathbf F\in\mathcal V^*:
\langle\mathbf F,\mathbf w\rangle\ge0
\ \forall\,\mathbf w\in\mathcal S\},
\]
and assume:

\begin{description}
	\item[$\mathrm{(SP)}$] \emph{Strong positivity of \(\mathcal L\).}
	If \(\mathbf w\in\mathcal S\setminus\{\mathbf0\}\) and
	\(\mathcal L\mathbf w\in\mathcal S^*\), then
	\(\mathbf w\in\mathcal S^o\).
	
	\item[$\mathrm{(SP^*)}$] \emph{Strong positivity of \(\mathcal L^*\).}
	If \(\mathbf w\in\mathcal S\setminus\{\mathbf0\}\) and
	\(\mathcal L^*\mathbf w\in\mathcal S^*\), then
	\(\mathbf w\in\mathcal S^o\).
\end{description}

\begin{remark}[Relation with the maximum principle]
	Conditions \(\mathrm{(SP)}\) and \(\mathrm{(SP^*)}\) are cone formulations
	of the strong positivity consequence of the maximum principle. For the
	standard positive cone, the inclusion
	\(\mathcal L\mathbf w\in\mathcal S^*\) means that
	\(\mathcal L\mathbf w\ge0\) in the weak sense. Thus \(\mathrm{(SP)}\) says
	that every nontrivial nonnegative supersolution is strictly positive:
	\[
	\mathbf w\in\mathcal S\setminus\{\mathbf0\},
	\qquad
	\mathcal L\mathbf w\ge0
	\quad\Longrightarrow\quad
	\mathbf w\in\mathcal S^o .
	\]
	In the scalar elliptic case this is precisely the usual strong maximum
	principle; see, for example, \cite{ProtterWeinberger}. For systems it is the
	corresponding strong positivity property with respect to the chosen cone,
	and it typically reflects an irreducible coupling of the components. The
	adjoint condition \(\mathrm{(SP^*)}\) is the same requirement for the left
	problem and is needed to obtain strict positivity of the limiting left
	eigenvector.
	
	The present formulation is also consistent with the classical
	maximum-principle approach to principal eigenvalues. In particular,
	Donsker--Varadhan \cite{Donsker} obtained a variational formula for the
	principal eigenvalue of a nonsymmetric operator satisfying a maximum
	principle, and the relation between principal eigenvalues and maximum
	principles for elliptic operators was further developed by
	Berestycki--Nirenberg--Varadhan \cite{BerestNienbVardan}. In the present
	argument the maximum-principle input has a more specific role: it is not used
	to define the principal value or to reduce the pencil to a positive
	resolvent. Rather, after the Galerkin minimax limit has produced nonnegative
	right-left quasi-eigenvectors, \(\mathrm{(SP)}\) and \(\mathrm{(SP^*)}\) are
	used to upgrade them to strictly positive right-left eigenvectors.
\end{remark}

\begin{theorem}[Convergence of the finite-dimensional minimax identities]
	\label{thm:continuous-minimax-limit}
	Assume \(\mathrm{(P)}\), \eqref{eq:unifellipt}, and the standing compactness
	hypotheses of this section. Suppose that, for every \(r\), the discrete
	minimax collapse lemma applies, and assume that, for some \(C>0\),
	\begin{equation}\label{eq:estM}
		0<\hat\lambda_r\le C
		\qquad
		\forall\,r\ge1.
	\end{equation}
	If, in addition, \(\mathrm{(SP)}\) and \(\mathrm{(SP^*)}\) hold, then the pencil
	\((\mathcal L,\mathcal G)\) satisfies the minimax condition
	\begin{equation}\label{eq:mimaPrin1}
		\hat\lambda
		:=
		\sup_{\mathbf u\in\mathcal S\setminus\{\mathbf0\}}
		\inf_{\mathbf v\in\mathcal S^o}\mathcal R(\mathbf u,\mathbf v)
		=
		\inf_{\mathbf v\in\mathcal S\setminus\{\mathbf0\}}
		\sup_{\mathbf u\in\mathcal S^o}\mathcal R(\mathbf u,\mathbf v)
		\ge
		0,
	\end{equation}
	and \(\hat\lambda_r\to\hat\lambda\). Let
	\(\mathbf u_r,\mathbf v_r\in\mathcal S_r^o\) be the corresponding discrete
	right-left eigenpairs, normalized by
	\[
	\|\mathbf u_r\|_{\mathcal V}
	=
	\|\mathbf v_r\|_{\mathcal V}
	=
	1.
	\]
	Then these normalized pairs have weak-in-\(\mathcal V\) and strong-in-\(H\)
	cluster points, and every such cluster point \((\mathbf u_*,\mathbf v_*)\)
	belongs to \(\mathcal S^o\times\mathcal S^o\), solves
	\[
	(\mathcal L-\hat\lambda\mathcal G)\mathbf u_*=0,
	\qquad
	(\mathcal L-\hat\lambda\mathcal G)^*\mathbf v_*=0,
	\]
	and satisfies \(\mathcal R(\mathbf u_*,\mathbf v_*)=\hat\lambda\). Hence
	every such cluster point is a principal right-left quasi-pair realizing
	the continuous minimax identity.
\end{theorem}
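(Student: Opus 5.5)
We take a subsequence along which \(\hat\lambda_r\to\lambda_0\in[0,C]\), pass to the limit in the discrete eigenvalue equations \eqref{eq:uravn}, use \(\mathrm{(SP)}\) and \(\mathrm{(SP^*)}\) to upgrade the limits to strictly positive eigenvectors, and then identify \(\lambda_0\) with the continuous minimax value through Corollary~\ref{cor:interior-eigenvalue-collapse}. Since that value does not depend on the subsequence, the whole sequence converges.

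\emph{Step 1: limits of the discrete eigenvectors.} Normalize \(\|\mathbf u_r\|_{\mathcal V}=\|\mathbf v_r\|_{\mathcal V}=1\). Reflexivity and the compact embedding give \(\mathbf u_r\rightharpoonup\mathbf u_*\) in \(\mathcal V\) and \(\mathbf u_r\to\mathbf u_*\) in \(H\), and the same for \(\mathbf v_r\to\mathbf v_*\). Weak closedness of \(\mathcal S\) gives \(\mathbf u_*,\mathbf v_*\in\mathcal S\). The equation \(L_r\bar{\mathbf u}_r=\hat\lambda_rG_r\bar{\mathbf u}_r\) says
\[
\langle\mathcal L\mathbf u_r,\boldsymbol\xi_r\rangle=\hat\lambda_r(\mathcal G\mathbf u_r,\boldsymbol\xi_r)_H
\qquad\forall\,\boldsymbol\xi_r\in\mathcal V_r .
\]
For \(\boldsymbol\xi\in\mathcal V\) we take \(\boldsymbol\xi_r\to\boldsymbol\xi\) strongly by \eqref{eq:Wr-density}. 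Weak convergence against strong convergence in the bilinear form, together with \(H\)-convergence in the weight term, gives \(\mathcal L\mathbf u_*=\lambda_0\mathcal G\mathbf u_*\) in \(\mathcal V^*\). The same argument gives \(\mathcal L^*\mathbf v_*=\lambda_0\mathcal G^*\mathbf v_*\).

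\emph{Step 2: nontriviality (main obstacle).} The \(\mathcal V\)-normalization is lost under weak limits, so we must rule out \(\mathbf u_*=0\). Test the equation with \(\boldsymbol\xi_r=\mathbf u_r\) and use Gårding:
\[
c=c\|\mathbf u_r\|_{\mathcal V}^2\le\langle\mathcal L\mathbf u_r,\mathbf u_r\rangle+C\|\mathbf u_r\|_H^2
=\hat\lambda_r(\mathcal G\mathbf u_r,\mathbf u_r)_H+C\|\mathbf u_r\|_H^2
\le(C\,C_G+C)\|\mathbf u_r\|_H^2 .
\]
The last inequality uses the bound \(\hat\lambda_r\le C\) from \eqref{eq:estM}. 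Hence \(\|\mathbf u_r\|_H\) is bounded below, and strong \(H\)-convergence gives \(\mathbf u_*\ne0\). For \(\mathbf v_r\) the same computation with \(\langle\mathcal L\boldsymbol\xi,\mathbf v_r\rangle\), \(\boldsymbol\xi=\mathbf v_r\), gives \(\mathbf v_*\neq0\).

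\emph{Step 3: positivity and identification of the level.} We have \(\mathbf u_*\in\mathcal S\setminus\{0\}\). The argument in Lemma~\ref{lem:left-sublevel-compactness} gives \((\mathcal G\mathbf u_*,\mathbf w)_H\ge0\) for \(\mathbf w\in\mathcal S\). Since \(\lambda_0\ge0\), this shows \(\mathcal L\mathbf u_*=\lambda_0\mathcal G\mathbf u_*\in\mathcal S^*\), and \(\mathrm{(SP)}\) gives \(\mathbf u_*\in\mathcal S^o\). Likewise \(\mathrm{(SP^*)}\) gives \(\mathbf v_*\in\mathcal S^o\). So \((\mathbf u_*,\mathbf v_*)\) is a strictly positive right-left eigenpair, and in particular \(\mathcal R(\mathbf u_*,\mathbf v_*)=\lambda_0\) by \((P)\). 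As noted in Section~\ref{sec:abstract}, such a pair satisfies \(\overline\lambda(\mathbf u_*)=\lambda_0=\underline\lambda(\mathbf v_*)\), because \(\mathcal R(\mathbf u_*,\mathbf w)=\lambda_0\) for every admissible \(\mathbf w\). Corollary~\ref{cor:interior-eigenvalue-collapse} then yields \(\underline\lambda=\lambda_0=\overline\lambda\). This proves \eqref{eq:mimaPrin1} with \(\hat\lambda=\lambda_0\ge0\).

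\emph{Step 4: convergence of the whole sequence.} By Step 3, every subsequential limit of the bounded sequence \((\hat\lambda_r)\) equals the intrinsic value \(\hat\lambda\), so \(\hat\lambda_r\to\hat\lambda\). Applying Steps 1–3 to an arbitrary cluster point of the normalized pairs then gives all the stated properties of \((\mathbf u_*,\mathbf v_*)\).
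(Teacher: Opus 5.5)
Your proposal is correct and follows the paper's proof essentially step for step: Gårding plus the bound $\hat\lambda_r\le C$ to keep the $H$-limits nonzero, passage to the limit in the Galerkin equations via \eqref{eq:Wr-density}, $(P)$ plus density to get $\mathcal G\mathbf u_*,\mathcal G^*\mathbf v_*\in\mathcal S^*$ so that $\mathrm{(SP)}$ and $\mathrm{(SP^*)}$ give strict positivity, Corollary~\ref{cor:interior-eigenvalue-collapse} to identify every accumulation point of $(\hat\lambda_r)$ with the intrinsic value $\hat\lambda$, and boundedness to get convergence of the whole sequence. The only cosmetic difference is that your Gårding estimate gives an explicit uniform lower bound on $\|\mathbf u_r\|_H$ instead of the paper's argument by contradiction; in that estimate you should rename one of the two different constants you both call $C$.
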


\begin{proof}
	By the discrete minimax collapse lemma,
	\(\mathbf u_r,\mathbf v_r\in\mathcal S_r^o\) satisfy
	\begin{equation}\label{eq:discrete-minimax-eigenpairs}
		L_r\bar{\mathbf u}_r
		=
		\hat\lambda_r G_r\bar{\mathbf u}_r,
		\qquad
		L_r^T\bar{\mathbf v}_r
		=
		\hat\lambda_r G_r^T\bar{\mathbf v}_r.
	\end{equation}
	We first show that the \(H\)-limits of normalized subsequences cannot vanish.
	If \(\|\mathbf u_r\|_H\to0\) along a subsequence, then testing the first
	equation in \eqref{eq:discrete-minimax-eigenpairs} by \(\mathbf u_r\) gives
	\[
	\langle\mathcal L\mathbf u_r,\mathbf u_r\rangle
	=
	\hat\lambda_r(\mathcal G\mathbf u_r,\mathbf u_r)_H
	\to
	0,
	\]
	which contradicts the G{\aa}rding inequality and the normalization
	\(\|\mathbf u_r\|_{\mathcal V}=1\). The same argument, using
	\[
	\bar{\mathbf v}_r^T L_r^T\bar{\mathbf v}_r
	=
	\bar{\mathbf v}_r^T L_r\bar{\mathbf v}_r,
	\]
	excludes \(\|\mathbf v_r\|_H\to0\).
	
	Let \(\lambda_*\) be any accumulation point of \((\hat\lambda_r)\). Passing
	to a subsequence, we may assume that \(\hat\lambda_r\to\lambda_*\). By
	\eqref{eq:estM}, \(\lambda_*\ge0\). By reflexivity, the compact embedding
	\(\mathcal V\hookrightarrow H\), and the normalization, we may further
	assume that
	\[
	\mathbf u_r\rightharpoonup\mathbf u_*,
	\qquad
	\mathbf v_r\rightharpoonup\mathbf v_*
	\quad\text{weakly in }\mathcal V,
	\]
	and
	\[
	\mathbf u_r\to\mathbf u_*,
	\qquad
	\mathbf v_r\to\mathbf v_*
	\quad\text{strongly in }H.
	\]
	The weak closedness of \(\mathcal S\), together with the nonvanishing of the
	\(H\)-limits, gives
	\[
	\mathbf u_*,\mathbf v_*
	\in
	\mathcal S\setminus\{\mathbf0\}.
	\]
	
	Let \(\mathbf z\in\mathcal V\). Choose
	\(\mathbf z_r\in\mathcal V_r\) as in \eqref{eq:Wr-density}. Passing to the
	limit in
	\[
	\langle\mathcal L\mathbf u_r,\mathbf z_r\rangle
	=
	\hat\lambda_r(\mathcal G\mathbf u_r,\mathbf z_r)_H
	\]
	gives
	\[
	\langle\mathcal L\mathbf u_*,\mathbf z\rangle
	=
	\lambda_*(\mathcal G\mathbf u_*,\mathbf z)_H.
	\]
	Similarly, the second equation in \eqref{eq:discrete-minimax-eigenpairs}
	gives
	\[
	\langle\mathcal L\mathbf z_r,\mathbf v_r\rangle
	=
	\hat\lambda_r(\mathcal G\mathbf z_r,\mathbf v_r)_H,
	\]
	and passage to the limit yields
	\[
	\langle\mathcal L\mathbf z,\mathbf v_*\rangle
	=
	\lambda_*(\mathcal G\mathbf z,\mathbf v_*)_H.
	\]
	Hence
	\begin{equation}\label{eq:LimEq}
		(\mathcal L-\lambda_*\mathcal G)\mathbf u_*=0,
		\qquad
		(\mathcal L-\lambda_*\mathcal G)^*\mathbf v_*=0.
	\end{equation}
	
	Condition \(\mathrm{(P)}\) and the density of
	\(\mathcal S^o\) in \(\mathcal S\setminus\{\mathbf0\}\) imply
	\[
	(\mathcal G\mathbf u_*,\mathbf w)_H\ge0,
	\qquad
	(\mathcal G\mathbf w,\mathbf v_*)_H\ge0
	\qquad
	\forall\,\mathbf w\in\mathcal S.
	\]
	Equivalently,
	\[
	\mathcal G\mathbf u_*,
	\mathcal G^*\mathbf v_*
	\in
	\mathcal S^*.
	\]
	Since \(\lambda_*\ge0\), \eqref{eq:LimEq} gives
	\[
	\mathcal L\mathbf u_*,
	\mathcal L^*\mathbf v_*
	\in
	\mathcal S^*.
	\]
	Thus \(\mathrm{(SP)}\) and \(\mathrm{(SP^*)}\) yield
	\[
	\mathbf u_*,\mathbf v_*\in\mathcal S^o.
	\]
	
	The two eigenvalue equations imply
	\[
	\mathcal R(\mathbf u_*,\mathbf v)=\lambda_*
	\quad
	\forall\,\mathbf v\in\mathcal S^o,
	\qquad
	\mathcal R(\mathbf u,\mathbf v_*)=\lambda_*
	\quad
	\forall\,\mathbf u\in\mathcal S^o.
	\]
	Therefore
	\begin{equation}\label{eq:limit-quasi-pair}
		\overline\lambda(\mathbf u_*)
		=
		\lambda_*
		=
		\underline\lambda(\mathbf v_*),
		\qquad
		\underline\lambda\le\lambda_*\le\overline\lambda,
	\end{equation}
	by Lemma~\ref{lem:main}. Since
	\(\mathbf u_*,\mathbf v_*\in\mathcal S^o\),
	Corollary~\ref{cor:interior-eigenvalue-collapse} gives
	\[
	\overline\lambda
	=
	\lambda_*
	=
	\underline\lambda.
	\]
	Every accumulation point of \((\hat\lambda_r)\) therefore equals the same
	value \(\hat\lambda\). Since \((\hat\lambda_r)\) is bounded by
	\eqref{eq:estM}, this proves
	\[
	\hat\lambda_r\to\hat\lambda.
	\]
	
	Finally, let \((\mathbf u_*,\mathbf v_*)\) be any weak-in-\(\mathcal V\) and
	strong-in-\(H\) cluster point of the normalized discrete eigenpairs. Since
	\(\hat\lambda_r\to\hat\lambda\), the preceding passage to the limit applies
	with \(\lambda_*=\hat\lambda\). Hence
	\[
	\mathbf u_*,\mathbf v_*
	\in
	\mathcal S^o,
	\]
	they solve
	\[
	(\mathcal L-\hat\lambda\mathcal G)\mathbf u_*=0,
	\qquad
	(\mathcal L-\hat\lambda\mathcal G)^*\mathbf v_*=0,
	\]
	and satisfy
	\[
	\mathcal R(\mathbf u_*,\mathbf v_*)=\hat\lambda.
	\]
	This proves the theorem.
\end{proof}

\begin{example}[A non-invertible pencil with a direct cone minimax level]
	The following example shows that the ordered-pencil quotient produces a
	principal cone level even when \(\mathcal L\) is not invertible and
	\(\mathcal G\) is genuinely coupled. It also clarifies the relation with
	shifted Krein--Rutman reductions: such reductions may be possible in
	special cases, but they require an auxiliary choice of a shift and a
	separate positivity analysis of the corresponding resolvent.
	
	Let \(\Omega=(0,\pi)\), \(\varphi_1(x)=\sin x\),
	\(\mathcal V=(H_0^1(0,\pi))^2\), \(H=(L^2(0,\pi))^2\), and
	\[
	\mathcal S
	=
	\{\mathbf u=(u_1,u_2)\in\mathcal V:\ u_1,u_2\ge0\},
	\qquad
	\mathcal S^o
	=
	\{\mathbf u\in\mathcal S:\ u_1,u_2>0\ \text{in }(0,\pi)\}.
	\]
	Set
	\[
	A=
	\begin{pmatrix}1&2\\2&1\end{pmatrix},
	\qquad
	G=
	\begin{pmatrix}3&1\\1&2\end{pmatrix},
	\qquad
	\mathcal L\mathbf u=-\mathbf u''+A\mathbf u,
	\qquad
	\mathcal G\mathbf u=G\mathbf u .
	\]
	Since all entries of \(G\) are positive,
	\[
	(\mathcal G\mathbf u,\mathbf v)_H>0
	\]
	for every admissible pair
	\[
	(\mathbf u,\mathbf v)\in
	((\mathcal S\setminus\{\mathbf0\})\times\mathcal S^o)
	\cup
	(\mathcal S^o\times(\mathcal S\setminus\{\mathbf0\})).
	\]
	
	The operator \(\mathcal L\) is not invertible. Indeed,
	\[
	A\binom{1}{-1}=-\binom{1}{-1},
	\]
	and, since \(-\varphi_1''=\varphi_1\),
	\[
	\mathbf z=\varphi_1\binom{1}{-1}
	\]
	satisfies \(\mathcal L\mathbf z=\mathbf0\). Hence the unshifted
	resolvent reduction \(\mathcal L^{-1}\mathcal G\) is not available.
	
	On the other hand, put
	\[
	p=\binom{1}{2},
	\qquad
	\mathbf u_*=\mathbf v_*=\varphi_1 p .
	\]
	Then
	\[
	(I+A)p=\binom{6}{6},
	\qquad
	Gp=\binom{5}{5},
	\]
	and therefore, with \(\lambda_*:=6/5\),
	\[
	\mathcal L\mathbf u_*=\lambda_*\mathcal G\mathbf u_* .
	\]
	Since \(A\) and \(G\) are symmetric, the same function is also a left
	eigenfunction:
	\[
	\mathcal L^*\mathbf v_*=\lambda_*\mathcal G^*\mathbf v_* .
	\]
	Moreover, \(\mathbf u_*,\mathbf v_*\in\mathcal S^o\).
	
	A shifted resolvent reduction is not automatic. Indeed, for \(\sigma=0\),
	\[
	(\mathcal L-\sigma\mathcal G)\varphi_1\binom{1}{-1}
	=
	\mathcal L\varphi_1\binom{1}{-1}
	=
	\mathbf0,
	\]
	whereas at the principal level \(\sigma=\lambda_*\),
	\[
	(\mathcal L-\sigma\mathcal G)\mathbf u_*
	=
	\mathcal L\mathbf u_*-\lambda_*\mathcal G\mathbf u_*
	=
	\mathbf0 .
	\]
	Thus the pencil is singular both at \(\sigma=0\) and at the value
	selected by the cone minimax formula. Consequently, a Krein--Rutman
	argument based on a shifted resolvent would first require choosing a
	non-spectral shift \(\sigma\) for which \(\mathcal L-\sigma\mathcal G\)
	is invertible, and then proving positivity, or strong positivity, of
	\[
	(\mathcal L-\sigma\mathcal G)^{-1}\mathcal G
	\]
	in the relevant cone. The cone minimax argument below avoids this
	auxiliary construction: it works directly with the pencil
	\((\mathcal L,\mathcal G)\) and its ordered quotient.
	
	For every \(\mathbf v\in\mathcal S^o\),
	\[
	\mathcal R(\mathbf u_*,\mathbf v)=\lambda_*,
	\]
	while for every \(\mathbf u\in\mathcal S\setminus\{\mathbf0\}\),
	\[
	\mathcal R(\mathbf u,\mathbf v_*)=\lambda_* .
	\]
	Hence, for every \(\mathbf u\in\mathcal S\setminus\{\mathbf0\}\),
	\[
	\inf_{\mathbf v\in\mathcal S^o}\mathcal R(\mathbf u,\mathbf v)
	\le
	\mathcal R(\mathbf u,\mathbf v_*)
	=
	\lambda_*,
	\]
	and equality is attained at \(\mathbf u=\mathbf u_*\). Therefore
	\[
	\sup_{\mathbf u\in\mathcal S\setminus\{\mathbf0\}}
	\inf_{\mathbf v\in\mathcal S^o}
	\mathcal R(\mathbf u,\mathbf v)
	=
	\lambda_* .
	\]
	Similarly, for every \(\mathbf v\in\mathcal S\setminus\{\mathbf0\}\),
	\[
	\sup_{\mathbf u\in\mathcal S^o}\mathcal R(\mathbf u,\mathbf v)
	\ge
	\mathcal R(\mathbf u_*,\mathbf v)
	=
	\lambda_*,
	\]
	and equality is attained at \(\mathbf v=\mathbf v_*\). Thus
	\[
	\hat\lambda
	=
	\sup_{\mathbf u\in\mathcal S\setminus\{\mathbf0\}}
	\inf_{\mathbf v\in\mathcal S^o}
	\mathcal R(\mathbf u,\mathbf v)
	=
	\inf_{\mathbf v\in\mathcal S\setminus\{\mathbf0\}}
	\sup_{\mathbf u\in\mathcal S^o}
	\mathcal R(\mathbf u,\mathbf v)
	=
	\frac65 .
	\]
	
	Thus the cone minimax method gives the principal level directly from the
	pencil \((\mathcal L,\mathcal G,\mathcal S)\), without first passing to a
	positive resolvent operator.
\end{example}

\begin{remark}[Relation to the Krein--Rutman principle]
	In the classical situation where the Krein--Rutman theorem applies to a
	suitable compact strongly positive resolvent associated with the pencil
	\[
	\mathcal L u=\lambda\mathcal G u,
	\]
	one obtains a positive right eigenvector, a positive adjoint eigenvector,
	and the corresponding principal spectral value. In this setting, the
	present minimax argument recovers the same value in variational form.
	Indeed, applying the interior collapse lemma to the positive right-left
	eigenpair gives the minimax identity
	\[
	\hat\lambda
	=
	\overline\lambda
	=
	\underline\lambda .
	\]
	Thus, under the hypotheses where the Krein--Rutman mechanism is available,
	the principal eigenvalue is also selected by the extended Rayleigh quotient
	and the corresponding cone minimax formula.
\end{remark}

\begin{remark}[Finite-dimensional minimax approximation]
	In the Krein--Rutman setting, the preceding result may be viewed as an
	alternative variational derivation of the principal-eigenvalue conclusion.
	A further point, however, is the approximation mechanism provided by the
	present approach. The principal right-left pair and the principal value are
	obtained as limits of finite-dimensional minimax saddle pairs:
	\[
	\hat\lambda_r
	\longrightarrow
	\hat\lambda
	=
	\overline\lambda
	=
	\underline\lambda .
	\]
	Thus the continuous minimax identity is recovered as the limit of exact
	finite-dimensional minimax identities. These finite-dimensional minimax
	problems can be studied and solved directly, without first reducing the
	pencil to a compact Krein--Rutman operator and without constructing
	Galerkin approximations of such a resolvent.
\end{remark}

\begin{remark}[Comparison with the maximum-principle approach]
	In the scalar setting of Berestycki--Nirenberg--Varadhan
	\cite{BerestNienbVardan}, the maximum principle yields the existence of a
	positive principal eigenfunction. In the overlap with that theory, this
	gives the positive right eigenfunction. The present theorem uses a different
	mechanism: it treats ordered pencils
	\(\mathcal L u=\lambda\mathcal G u\), possibly with singular \(\mathcal G\),
	and identifies a right-left principal pair through a two-variable cone
	minimax formula and its finite-dimensional minimax approximations.
\end{remark}

\section{A posteriori spectral enclosures and primal--dual gaps}
\label{sec:aposteriori-enclosures}

The minimax formulas turn cone trial vectors into certified lower and
upper bounds for the distinguished spectral levels. In finite dimensions
these bounds reduce to componentwise Collatz--Wielandt formulas requiring
only matrix--vector products, and their difference gives a computable
primal--dual gap and stopping criterion.

Recall that for $\mathbf{u},\mathbf{v}\in\mathcal{S}\setminus\{\mathbf{0}\}$,
\[
\overline\lambda(\mathbf{u})
:= \inf_{\mathbf{w}\in\mathcal{S}^o}\mathcal{R}(\mathbf{u},\mathbf{w}),
\qquad
\underline\lambda(\mathbf{v})
:= \sup_{\mathbf{w}\in\mathcal{S}^o}\mathcal{R}(\mathbf{w},\mathbf{v}).
\]
At the discrete level, for
$\mathbf{u},\mathbf{v}\in\mathcal{S}_r\setminus\{\mathbf{0}\}$, set
\begin{equation}\label{eq:discrete-trial-values}
	\lambda_{R,r}(\mathbf{u})
	:= \inf_{\mathbf{w}\in\mathcal{S}_r^o}\mathcal{R}_r(\mathbf{u},\mathbf{w}),
	\qquad
	\lambda_{L,r}(\mathbf{v})
	:= \sup_{\mathbf{w}\in\mathcal{S}_r^o}\mathcal{R}_r(\mathbf{w},\mathbf{v}).
\end{equation}

\subsection{Certified enclosures after minimax collapse}

\begin{corollary}[Continuous and discrete a posteriori enclosures]
	\label{cor:minimax-aposteriori-enclosures}
	Suppose the continuous minimax condition
	\(\overline\lambda=\underline\lambda=\hat\lambda\) holds. Then for every
	\(\mathbf{u},\mathbf{v}\in\mathcal{S}\setminus\{\mathbf{0}\}\),
	\begin{equation}\label{eq:continuous-certified-enclosure}
		\overline\lambda(\mathbf{u}) \le \hat\lambda \le \underline\lambda(\mathbf{v}).
	\end{equation}
	Setting
	\[
	\operatorname{gap}(\mathbf{u},\mathbf{v})
	:=
	\underline\lambda(\mathbf{v})-\overline\lambda(\mathbf{u})\ge0,
	\]
	one has
	\begin{equation}\label{eq:continuous-aposteriori-errors}
		0 \le \hat\lambda-\overline\lambda(\mathbf{u}) \le \operatorname{gap}(\mathbf{u},\mathbf{v}),
		\qquad
		0 \le \underline\lambda(\mathbf{v})-\hat\lambda \le \operatorname{gap}(\mathbf{u},\mathbf{v}).
	\end{equation}
	
	If the \(r\)-th discrete minimax collapse holds with value \(\hat\lambda_r\),
	then for every
	\(\mathbf{u},\mathbf{v}\in\mathcal{S}_r\setminus\{\mathbf{0}\}\),
	\begin{equation}\label{eq:discrete-certified-enclosure}
		\lambda_{R,r}(\mathbf{u}) \le \hat\lambda_r \le \lambda_{L,r}(\mathbf{v}),
	\end{equation}
	and, with
	\[
	\operatorname{gap}_r(\mathbf{u},\mathbf{v})
	:=
	\lambda_{L,r}(\mathbf{v})-\lambda_{R,r}(\mathbf{u}),
	\]
	\begin{equation}\label{eq:discrete-aposteriori-errors}
		0 \le \hat\lambda_r-\lambda_{R,r}(\mathbf{u}) \le \operatorname{gap}_r(\mathbf{u},\mathbf{v}),
		\qquad
		0 \le \lambda_{L,r}(\mathbf{v})-\hat\lambda_r \le \operatorname{gap}_r(\mathbf{u},\mathbf{v}).
	\end{equation}
	
	Under the hypotheses of Theorem~\ref{thm:continuous-minimax-limit}, let
	\(\widetilde{\mathbf{u}}_r,\widetilde{\mathbf{v}}_r
	\in\mathcal{S}_r\setminus\{\mathbf{0}\}\) be any trial elements such that
	\[
	\operatorname{gap}_r(\widetilde{\mathbf{u}}_r,\widetilde{\mathbf{v}}_r)\to0.
	\]
	Then
	\begin{equation}\label{eq:certificate-convergence}
		\lambda_{R,r}(\widetilde{\mathbf{u}}_r)\to\hat\lambda,
		\qquad
		\lambda_{L,r}(\widetilde{\mathbf{v}}_r)\to\hat\lambda.
	\end{equation}
\end{corollary}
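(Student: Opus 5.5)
The argument is almost entirely definitional, so the plan is to unwind the definitions of the one-sided levels and then combine them with the minimax identities already established.

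\emph{Continuous enclosure.} For \(\mathbf u\in\mathcal S\setminus\{\mathbf0\}\), the definition of \(\overline\lambda\) as a supremum over \(\mathcal S\setminus\{\mathbf0\}\) gives \(\overline\lambda(\mathbf u)\le\overline\lambda=\hat\lambda\). Likewise, the definition of \(\underline\lambda\) as an infimum gives \(\hat\lambda=\underline\lambda\le\underline\lambda(\mathbf v)\). This is \eqref{eq:continuous-certified-enclosure}. Nonnegativity of the gap and both inequalities in \eqref{eq:continuous-aposteriori-errors} then follow by subtraction: each of \(\hat\lambda-\overline\lambda(\mathbf u)\) and \(\underline\lambda(\mathbf v)-\hat\lambda\) is a nonnegative summand of the gap. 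One should note that the values may be \(\pm\infty\). If some \(\underline\lambda(\mathbf v)=+\infty\), the statements hold trivially in the extended sense, and I will remark on this rather than exclude it.

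\emph{Discrete enclosure.} The discrete case is identical, using the four levels in \eqref{eq:discrete-upper-minimax}--\eqref{eq:discrete-lower-minimax}. We have \(\lambda_{R,r}(\mathbf u)\le\overline\lambda_r\) and \(\underline\lambda_r\le\lambda_{L,r}(\mathbf v)\) directly from the sup and inf definitions. Lemma~\ref{lem:discrete-collapse} gives \(\overline\lambda_r=\underline\lambda_r=\hat\lambda_r\), which yields \eqref{eq:discrete-certified-enclosure} and \eqref{eq:discrete-aposteriori-errors}. Lemma~\ref{lem:discrete-minimax} guarantees these levels are finite.

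\emph{Convergence.} Under the hypotheses of Theorem~\ref{thm:continuous-minimax-limit} we have \(\hat\lambda_r\to\hat\lambda\). The discrete a posteriori bounds give
\[
|\lambda_{R,r}(\widetilde{\mathbf u}_r)-\hat\lambda|\le\operatorname{gap}_r(\widetilde{\mathbf u}_r,\widetilde{\mathbf v}_r)+|\hat\lambda_r-\hat\lambda|,
\]
and the same bound holds for \(\lambda_{L,r}(\widetilde{\mathbf v}_r)\). Both terms on the right tend to zero, which proves \eqref{eq:certificate-convergence}.

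The only point needing care is that the discrete collapse is indeed in force. Theorem~\ref{thm:continuous-minimax-limit} assumes this, since it requires \(\hat\lambda_r>0\) and the applicability of the discrete collapse lemma for each \(r\). So I expect no real obstacle; the main thing is bookkeeping, namely citing the correct collapse result at each level.
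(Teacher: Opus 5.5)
Your proposal is correct and follows the paper's own proof essentially verbatim. Both enclosures come from reading $\overline\lambda$ and $\underline\lambda$ (and, under collapse, $\overline\lambda_r=\underline\lambda_r=\hat\lambda_r$) as the supremum and infimum of the one-sided levels. The convergence statement combines $\hat\lambda_r\to\hat\lambda$ from Theorem~\ref{thm:continuous-minimax-limit} with the discrete gap bounds; your explicit triangle inequality and your remark on possibly infinite one-sided values are harmless details the paper leaves implicit.
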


\begin{proof}
	The continuous enclosure \eqref{eq:continuous-certified-enclosure} follows
	directly from
	\[
	\hat\lambda
	=
	\sup_{\mathbf{u}\in\mathcal S\setminus\{\mathbf0\}}
	\overline\lambda(\mathbf{u})
	=
	\inf_{\mathbf{v}\in\mathcal S\setminus\{\mathbf0\}}
	\underline\lambda(\mathbf{v}),
	\]
	and the discrete enclosure follows from the corresponding discrete minimax
	identity. The error estimates are immediate consequences of the two
	enclosures. Finally, Theorem~\ref{thm:continuous-minimax-limit} gives
	\(\hat\lambda_r\to\hat\lambda\), and combining this convergence with
	\eqref{eq:discrete-aposteriori-errors} gives
	\eqref{eq:certificate-convergence}.
\end{proof}

\subsection{Mixed discrete intervals and componentwise certificates}
\label{subsec:mixed-minimax-enclosures}

Before minimax collapse, the two mixed discrete minimax values
\(\underline\Lambda_r\) and \(\overline\Lambda_r\) still provide a certified
interval.

\begin{proposition}[Mixed enclosure and gap decomposition]
	\label{prop:mixed-minimax-enclosure}
	Assume $(P_r)$. Then $\underline\Lambda_r\le\overline\Lambda_r$, and every
	$\mathbf{u},\mathbf{v}\in\mathcal{S}_r^o$ satisfies
	\begin{equation}\label{eq:mixed-minimax-enclosure}
		\lambda_{R,r}(\mathbf{u})
		\le \underline\Lambda_r \le \overline\Lambda_r
		\le \lambda_{L,r}(\mathbf{v}).
	\end{equation}
	The total gap decomposes as
	\begin{equation}\label{eq:mixed-gap-decomposition}
		\operatorname{gap}_r(\mathbf{u},\mathbf{v})
		= \bigl[\lambda_{L,r}(\mathbf{v})-\overline\Lambda_r\bigr]
		+ \bigl[\overline\Lambda_r-\underline\Lambda_r\bigr]
		+ \bigl[\underline\Lambda_r-\lambda_{R,r}(\mathbf{u})\bigr],
	\end{equation}
	with all three terms nonnegative. In particular,
	\begin{equation}\label{eq:mixed-gap-control}
		0 \le \overline\Lambda_r-\underline\Lambda_r
		\le \operatorname{gap}_r(\mathbf{u},\mathbf{v}).
	\end{equation}
\end{proposition}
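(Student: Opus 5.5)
The plan is to prove the three inequalities in \eqref{eq:mixed-minimax-enclosure} separately, each from the definitions \eqref{eq:discrete-upper-minimax}--\eqref{eq:discrete-lower-minimax}. The decomposition \eqref{eq:mixed-gap-decomposition} and the bound \eqref{eq:mixed-gap-control} will then follow at once.

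\textbf{Outer inequalities.} Fix $\mathbf u\in\mathcal S_r^o$. Since $\mathcal S_r^o\subset\mathcal S_r\setminus\{\mathbf0\}$, the infimum defining $\lambda_{R,r}(\mathbf u)$ is taken over a smaller set than the infimum in $\inf_{\mathbf v\in\mathcal S_r\setminus\{\mathbf0\}}\mathcal R_r(\mathbf u,\mathbf v)$. So one might first expect the reverse inequality, and this step needs care. Because $\mathbf u\in\mathcal S_r^o$, condition $(P_r)$ keeps the denominator $\bar{\mathbf v}^TG_r\bar{\mathbf u}$ positive for all $\mathbf v\in\mathcal S_r\setminus\{\mathbf0\}$. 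Hence $\mathbf v\mapsto\mathcal R_r(\mathbf u,\mathbf v)$ is continuous there, and $\mathcal S_r^o$ is dense in $\mathcal S_r\setminus\{\mathbf0\}$. The two infima therefore coincide, exactly as in the proof of Lemma~\ref{lem:discrete-collapse}. Taking the supremum over $\mathbf u\in\mathcal S_r^o$ gives $\lambda_{R,r}(\mathbf u)\le\underline\Lambda_r$. The symmetric argument applies to $\mathbf v\in\mathcal S_r^o$: continuity in $\mathbf u$ on $\mathcal S_r\setminus\{\mathbf0\}$ shows that $\lambda_{L,r}(\mathbf v)=\sup_{\mathbf u\in\mathcal S_r\setminus\{\mathbf0\}}\mathcal R_r(\mathbf u,\mathbf v)$. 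Taking the infimum over $\mathbf v\in\mathcal S_r^o$ then gives $\overline\Lambda_r\le\lambda_{L,r}(\mathbf v)$.

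\textbf{Middle inequality.} I will use the elementary weak-duality argument. For any $\mathbf u,\mathbf v\in\mathcal S_r^o$ we have
\[
\inf_{\mathbf w\in\mathcal S_r\setminus\{\mathbf0\}}\mathcal R_r(\mathbf u,\mathbf w)\le\mathcal R_r(\mathbf u,\mathbf v)\le\sup_{\mathbf w\in\mathcal S_r\setminus\{\mathbf0\}}\mathcal R_r(\mathbf w,\mathbf v).
\]
Both ends are well defined by $(P_r)$. I then take the supremum over $\mathbf u\in\mathcal S_r^o$ on the left and the infimum over $\mathbf v\in\mathcal S_r^o$ on the right, which gives $\underline\Lambda_r\le\overline\Lambda_r$. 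No minimax theorem is needed for this step.

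\textbf{Gap.} The identity \eqref{eq:mixed-gap-decomposition} is a telescoping sum of the definition $\operatorname{gap}_r=\lambda_{L,r}(\mathbf v)-\lambda_{R,r}(\mathbf u)$. Each bracket is nonnegative by the chain \eqref{eq:mixed-minimax-enclosure}. Dropping the two outer brackets yields \eqref{eq:mixed-gap-control}.

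The only delicate point is the extension of the infimum and supremum from $\mathcal S_r^o$ to $\mathcal S_r\setminus\{\mathbf0\}$ in the outer inequalities. This is exactly where strict positivity of the fixed trial vector and $(P_r)$ are used, and it explains why the proposition is stated for $\mathbf u,\mathbf v\in\mathcal S_r^o$ rather than on the closed cone.
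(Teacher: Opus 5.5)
Your proof is correct and follows the paper's argument. The outer inequalities come from extending the infimum and supremum from $\mathcal S_r^o$ to $\mathcal S_r\setminus\{\mathbf0\}$ by density and continuity under $(P_r)$, and the gap identity is the same telescoping sum. The one difference is that you prove $\underline\Lambda_r\le\overline\Lambda_r$ directly by weak duality, using $\mathcal S_r^o\subset\mathcal S_r\setminus\{\mathbf0\}$, whereas the paper simply invokes the minimax inequality. Neither you nor the paper states that the four quantities are finite, which the subtraction needs; this follows from the bounded-away-from-zero denominator argument in Lemma~\ref{lem:discrete-minimax}.
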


\begin{proof}
	For $\mathbf{u},\mathbf{v}\in\mathcal{S}_r^o$, density and continuity give
	\[
	\lambda_{R,r}(\mathbf{u})
	= \inf_{\mathbf{w}\in\mathcal{S}_r\setminus\{\mathbf{0}\}}
	\mathcal{R}_r(\mathbf{u},\mathbf{w}),
	\qquad
	\lambda_{L,r}(\mathbf{v})
	= \sup_{\mathbf{w}\in\mathcal{S}_r\setminus\{\mathbf{0}\}}
	\mathcal{R}_r(\mathbf{w},\mathbf{v}).
	\]
	The enclosure \eqref{eq:mixed-minimax-enclosure} follows from the definitions
	of $\underline\Lambda_r$ and $\overline\Lambda_r$, together with the minimax
	inequality $\underline\Lambda_r\le\overline\Lambda_r$. The decomposition
	\eqref{eq:mixed-gap-decomposition} is algebraic, and all three terms are
	nonnegative by \eqref{eq:mixed-minimax-enclosure}.
\end{proof}

The trial gap thus controls both the individual trial errors and the intrinsic
width of the mixed minimax interval. After collapse
$\underline\Lambda_r=\overline\Lambda_r$, the middle term vanishes and the gap
becomes a direct localization error for the collapsed mixed value.

\begin{proposition}[Componentwise Collatz--Wielandt enclosure]
	\label{prop:componentwise-mixed-enclosure}
	Assume $(P_r)$, and let
	$\mathbf{u},\mathbf{v}\in\mathcal{S}_r^o$ be any trial elements. Then
	$G_r\bar{\mathbf{u}}>0$ and $G_r^T\bar{\mathbf{v}}>0$ componentwise, and
	\begin{equation}\label{eq:componentwise-right-left-bound}
		\lambda_{R,r}(\mathbf{u})
		= \min_{1\le i\le mN_r}
		\frac{(L_r\bar{\mathbf{u}})_i}{(G_r\bar{\mathbf{u}})_i},
		\qquad
		\lambda_{L,r}(\mathbf{v})
		= \max_{1\le i\le mN_r}
		\frac{(L_r^T\bar{\mathbf{v}})_i}{(G_r^T\bar{\mathbf{v}})_i}.
	\end{equation}
	Consequently,
	\begin{equation}\label{eq:componentwise-mixed-enclosure}
		\min_i\frac{(L_r\bar{\mathbf{u}})_i}{(G_r\bar{\mathbf{u}})_i}
		\le \underline\Lambda_r \le \overline\Lambda_r
		\le \max_i\frac{(L_r^T\bar{\mathbf{v}})_i}{(G_r^T\bar{\mathbf{v}})_i}.
	\end{equation}
\end{proposition}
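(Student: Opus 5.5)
The plan has three steps: prove strict positivity of $G_r\bar{\mathbf u}$ and $G_r^T\bar{\mathbf v}$ from $(P_r)$, then reduce the inner optimizations to extreme rays of the cone, and finally read off the enclosure from Proposition~\ref{prop:mixed-minimax-enclosure}.

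\emph{Strict positivity.} Fix $\mathbf u\in\mathcal S_r^o$. Each basis vector $\Phi_{i,k}$ lies in $\mathcal S_r\setminus\{\mathbf0\}$, so $(\mathbf u,\Phi_{i,k})\in\mathcal S_r^o\times(\mathcal S_r\setminus\{\mathbf0\})\subset\mathcal D_r$. Hence $(P_r)$ gives
\[
(G_r\bar{\mathbf u})_{(i,k)}=e_{(i,k)}^TG_r\bar{\mathbf u}>0 .
\]
This is the point where strict positivity of $\mathbf u$ is essential. Otherwise we would only obtain nonnegativity by a limiting argument, as in the proof of Lemma~\ref{lem:discrete-collapse}. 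Symmetrically, using the pairs $(\Phi_{i,k},\mathbf v)$ gives $G_r^T\bar{\mathbf v}>0$.

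\emph{Formula for $\lambda_{R,r}$.} Write $a:=L_r\bar{\mathbf u}$ and $b:=G_r\bar{\mathbf u}>0$. For $\mathbf w\in\mathcal S_r^o$ the quotient is
\[
\mathcal R_r(\mathbf u,\mathbf w)=\frac{\bar{\mathbf w}^Ta}{\bar{\mathbf w}^Tb}=\frac{\sum_i \bar w_i b_i\,(a_i/b_i)}{\sum_i \bar w_i b_i}.
\]
This is a convex combination of the ratios $a_i/b_i$ with strictly positive weights $\bar w_ib_i/\sum_j\bar w_jb_j$. It is therefore $\ge\min_i a_i/b_i$, which gives one inequality. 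For the reverse inequality, let $i_0$ be a minimizing index. Take $\bar{\mathbf w}=e_{i_0}+\varepsilon\mathbf 1\in\mathcal S_r^o$; letting $\varepsilon\to0$, the quotient tends to $a_{i_0}/b_{i_0}$, since the denominator tends to $b_{i_0}>0$. So the infimum equals the minimum, though it is not attained inside $\mathcal S_r^o$. The same computation with $a=L_r^T\bar{\mathbf v}$ and $b=G_r^T\bar{\mathbf v}$ gives the max formula for $\lambda_{L,r}$. Here one uses $\bar{\mathbf v}^TL_r\bar{\mathbf w}=\bar{\mathbf w}^T(L_r^T\bar{\mathbf v})$ and the analogous identity for $G_r$.

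\emph{Enclosure.} The chain \eqref{eq:componentwise-mixed-enclosure} follows by substituting these formulas into \eqref{eq:mixed-minimax-enclosure} of Proposition~\ref{prop:mixed-minimax-enclosure}, which applies because $\mathbf u,\mathbf v\in\mathcal S_r^o$. The only genuinely delicate step is the strict positivity of $G_r\bar{\mathbf u}$. Every ratio is well defined only because boundary test directions are admissible when paired with an interior trial vector, which is exactly what the definition of $\mathcal D_r$ provides.
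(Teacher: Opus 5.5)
Your proof is correct and follows the same route as the paper. You apply $(P_r)$ to pairs of an interior vector with a coordinate direction to get $G_r\bar{\mathbf u}>0$ and $G_r^T\bar{\mathbf v}>0$, write the quotient as a strictly positive convex combination of the ratios $a_i/b_i$ whose infimum (supremum) over $\mathcal S_r^o$ is reached by approaching coordinate directions, and read off the enclosure from Proposition~\ref{prop:mixed-minimax-enclosure}. One aside is inaccurate but harmless: the infimum is attained inside $\mathcal S_r^o$ exactly when all ratios $a_i/b_i$ coincide (for instance when $\mathbf u$ is a discrete eigenvector), so ``not attained'' holds only when the ratios are not all equal.
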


\begin{proof}
	Applying $(P_r)$ to coordinate directions gives
	$G_r\bar{\mathbf{u}}>0$ and $G_r^T\bar{\mathbf{v}}>0$ componentwise. Set
	$a_i:=(L_r\bar{\mathbf{u}})_i$ and $b_i:=(G_r\bar{\mathbf{u}})_i>0$. For
	$\bar{\mathbf{w}}>0$,
	\[
	\mathcal{R}_r(\mathbf{u},\mathbf{w})
	= \frac{\sum_i w_i b_i(a_i/b_i)}{\sum_i w_i b_i},
	\]
	a convex combination of the ratios $a_i/b_i$. Since the open cone can
	approach every coordinate direction, the infimum is $\min_i a_i/b_i$.
	The left formula is analogous, and \eqref{eq:componentwise-mixed-enclosure}
	follows from Proposition~\ref{prop:mixed-minimax-enclosure}.
\end{proof}

The bounds \eqref{eq:componentwise-right-left-bound} require only the four
products $L_r\bar{\mathbf{u}}$, $G_r\bar{\mathbf{u}}$, $L_r^T\bar{\mathbf{v}}$,
and $G_r^T\bar{\mathbf{v}}$, providing inexpensive certificates without first
solving the generalized matrix eigenvalue problem.

\section{Stability and approximation error estimates}
\label{sec:stability}

We record two consequences of the minimax characterization: one-sided
bounds under operator perturbations and estimates for the gap between the
continuous and finite-dimensional minimax levels.

\subsection{Operator perturbations}

Throughout this subsection \(\mathcal G\) is fixed. For a bounded linear
operator \(\mathcal L:\mathcal V\to\mathcal V^*\), set
\[
\mathcal R_{\mathcal L}(u,v)
:=
\frac{\langle \mathcal L u,v\rangle}{(\mathcal G u,v)_H},
\]
and define
\[
\overline\lambda_{\mathcal L}
:=
\sup_{u\in\mathcal S\setminus\{0\}}
\inf_{v\in\mathcal S^o}
\mathcal R_{\mathcal L}(u,v),
\qquad
\underline\lambda_{\mathcal L}
:=
\inf_{v\in\mathcal S\setminus\{0\}}
\sup_{u\in\mathcal S^o}
\mathcal R_{\mathcal L}(u,v).
\]
All minimax values below are assumed finite. For a bounded linear operator
\(\mathcal B:\mathcal V\to\mathcal V^*\) and
\(w\in\mathcal S\setminus\{0\}\), define
\begin{equation}\label{eq:dpm-G}
	d^+_{\mathcal B}(w)
	:=
	\inf_{v\in\mathcal S^o}
	\frac{\langle \mathcal B w,v\rangle}{(\mathcal G w,v)_H},
	\qquad
	d^-_{\mathcal B}(w)
	:=
	\sup_{u\in\mathcal S^o}
	\frac{\langle \mathcal B u,w\rangle}{(\mathcal G u,w)_H},
\end{equation}
where the denominators are positive by the mixed positivity assumption.

\begin{theorem}[One-sided perturbation estimates]
	\label{thm:stability-G}
	Let \(\mathcal L_1,\mathcal L_2:\mathcal V\to\mathcal V^*\) be bounded
	linear operators and set \(\mathcal B:=\mathcal L_1-\mathcal L_2\).
	\begin{enumerate}[label=\upshape(\roman*)]
		\item If \(u_{\mathcal L_2}\) is a right cone quasi-eigenvector realizing
		\(\overline\lambda_{\mathcal L_2}\) and
		\(d^+_{\mathcal B}(u_{\mathcal L_2})>-\infty\), then
		\begin{equation}\label{eq:upper_stability-G}
			\overline\lambda_{\mathcal L_1}
			-
			\overline\lambda_{\mathcal L_2}
			\ge
			d^+_{\mathcal B}(u_{\mathcal L_2}).
		\end{equation}
		
		\item If \(v_{\mathcal L_2}\) is a left cone quasi-eigenvector realizing
		\(\underline\lambda_{\mathcal L_2}\) and
		\(d^-_{\mathcal B}(v_{\mathcal L_2})<+\infty\), then
		\begin{equation}\label{eq:lower_stability-G}
			\underline\lambda_{\mathcal L_1}
			-
			\underline\lambda_{\mathcal L_2}
			\le
			d^-_{\mathcal B}(v_{\mathcal L_2}).
		\end{equation}
	\end{enumerate}
\end{theorem}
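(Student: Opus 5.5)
The plan is to use the quasi-eigenvector of \(\mathcal L_2\) as a trial element in the variational formula for \(\mathcal L_1\), and to split the Rayleigh quotient additively. The key identity is that, for a fixed admissible pair \((u,v)\) (so that the denominator is positive by \((P)\)),
\[
\mathcal R_{\mathcal L_1}(u,v)
=
\mathcal R_{\mathcal L_2}(u,v)
+
\frac{\langle \mathcal B u,v\rangle}{(\mathcal G u,v)_H},
\]
because the denominator depends only on \(\mathcal G\), which is common to both pencils. This linearity in the numerator is the whole mechanism. The rest consists of elementary inequalities for the infimum of a sum and the supremum of a sum.

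For (i), set \(u:=u_{\mathcal L_2}\in\mathcal S\setminus\{0\}\). Take the infimum over \(v\in\mathcal S^o\) in the identity above and use \(\inf(f+g)\ge\inf f+\inf g\). This gives
\[
\overline\lambda_{\mathcal L_1}(u)
\ge
\inf_{v\in\mathcal S^o}\mathcal R_{\mathcal L_2}(u,v)
+
d^+_{\mathcal B}(u)
=
\overline\lambda_{\mathcal L_2}
+
d^+_{\mathcal B}(u).
\]
The last equality holds because \(u\) realizes \(\overline\lambda_{\mathcal L_2}\). The assumption \(d^+_{\mathcal B}(u)>-\infty\), together with the standing finiteness of the minimax values, ensures that the sum is not of the indeterminate form \(\infty-\infty\). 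Finally, \(\overline\lambda_{\mathcal L_1}\ge\overline\lambda_{\mathcal L_1}(u)\) by the definition of the sup--inf level, and subtracting the finite number \(\overline\lambda_{\mathcal L_2}\) gives \eqref{eq:upper_stability-G}.

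For (ii), the argument is symmetric. Set \(v:=v_{\mathcal L_2}\) and take the supremum over \(u\in\mathcal S^o\), using \(\sup(f+g)\le\sup f+\sup g\). This gives
\[
\underline\lambda_{\mathcal L_1}(v)
\le
\underline\lambda_{\mathcal L_2}(v)+d^-_{\mathcal B}(v)
=
\underline\lambda_{\mathcal L_2}+d^-_{\mathcal B}(v).
\]
Then \(\underline\lambda_{\mathcal L_1}\le\underline\lambda_{\mathcal L_1}(v)\) by the definition of the inf--sup level, which yields \eqref{eq:lower_stability-G}.

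There is no serious obstacle here. The only points to check are, first, that every pair used is admissible, i.e.\ lies in \(\mathcal D\), so that \((P)\) keeps all denominators positive. This holds because one variable is always in \(\mathcal S^o\) and the other in \(\mathcal S\setminus\{0\}\). Second, the hypotheses \(d^\pm_{\mathcal B}>\mp\infty\) are exactly what is needed to make the additive splitting of the infimum and supremum meaningful.
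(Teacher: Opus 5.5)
Your proposal is correct and is essentially the paper's proof: you insert the quasi-eigenvector of \(\mathcal L_2\) as a trial element in the level for \(\mathcal L_1\), and split the quotient additively using the common denominator \((\mathcal G u,v)_H\). You then apply \(\inf(f+g)\ge\inf f+\inf g\) (resp.\ \(\sup(f+g)\le\sup f+\sup g\)) together with the realization property, exactly as the paper does. Your remarks on the admissibility of the pairs and on avoiding \(\infty-\infty\) are accurate and simply make explicit what the paper leaves implicit.
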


\begin{proof}
	For \((i)\), since \(u_{\mathcal L_2}\in\mathcal S\setminus\{0\}\),
	\[
	\begin{aligned}
		\overline\lambda_{\mathcal L_1}
		&\ge
		\inf_{v\in\mathcal S^o}
		\mathcal R_{\mathcal L_1}(u_{\mathcal L_2},v)
		\\
		&=
		\inf_{v\in\mathcal S^o}
		\left[
		\mathcal R_{\mathcal L_2}(u_{\mathcal L_2},v)
		+
		\frac{\langle \mathcal B u_{\mathcal L_2},v\rangle}
		{(\mathcal G u_{\mathcal L_2},v)_H}
		\right]
		\\
		&\ge
		\overline\lambda_{\mathcal L_2}
		+
		d^+_{\mathcal B}(u_{\mathcal L_2}).
	\end{aligned}
	\]
	For \((ii)\), using \(v_{\mathcal L_2}\in\mathcal S\setminus\{0\}\) and
	\(\sup(f+g)\le \sup f+\sup g\), we obtain
	\[
	\begin{aligned}
		\underline\lambda_{\mathcal L_1}
		&\le
		\sup_{u\in\mathcal S^o}
		\mathcal R_{\mathcal L_1}(u,v_{\mathcal L_2})
		\\
		&=
		\sup_{u\in\mathcal S^o}
		\left[
		\mathcal R_{\mathcal L_2}(u,v_{\mathcal L_2})
		+
		\frac{\langle \mathcal B u,v_{\mathcal L_2}\rangle}
		{(\mathcal G u,v_{\mathcal L_2})_H}
		\right]
		\\
		&\le
		\underline\lambda_{\mathcal L_2}
		+
		d^-_{\mathcal B}(v_{\mathcal L_2}).
	\end{aligned}
	\]
\end{proof}

\begin{corollary}[Perturbation band under the minimax condition]
	\label{cor:band-G}
	Suppose that both pencils \((\mathcal L_i,\mathcal G)\), \(i=1,2\), satisfy
	the minimax condition
	\[
	\overline\lambda_{\mathcal L_i}
	=
	\underline\lambda_{\mathcal L_i}
	=:
	\hat\lambda_{\mathcal L_i},
	\]
	and that \((\mathcal L_2,\mathcal G)\) admits a principal right-left
	quasi-pair \((u_{\mathcal L_2},v_{\mathcal L_2})\). Then, whenever the
	quantities below are finite,
	\begin{equation}\label{eq:band-final-G}
		\hat\lambda_{\mathcal L_2}
		+
		d^+_{\mathcal L_1-\mathcal L_2}(u_{\mathcal L_2})
		\le
		\hat\lambda_{\mathcal L_1}
		\le
		\hat\lambda_{\mathcal L_2}
		+
		d^-_{\mathcal L_1-\mathcal L_2}(v_{\mathcal L_2}).
	\end{equation}
\end{corollary}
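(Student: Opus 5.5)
The band follows directly from Theorem~\ref{thm:stability-G} once its two hypotheses are matched with the data of the corollary. Let \((u_{\mathcal L_2},v_{\mathcal L_2})\) be the principal right-left quasi-pair of \((\mathcal L_2,\mathcal G)\). By definition of a quasi-pair under the minimax condition, \(u_{\mathcal L_2}\in\mathcal S\setminus\{0\}\) satisfies \(\overline\lambda(u_{\mathcal L_2})=\hat\lambda_{\mathcal L_2}=\overline\lambda_{\mathcal L_2}\). Hence it is a right cone quasi-eigenvector realizing \(\overline\lambda_{\mathcal L_2}\). Likewise \(v_{\mathcal L_2}\) satisfies \(\underline\lambda(v_{\mathcal L_2})=\hat\lambda_{\mathcal L_2}=\underline\lambda_{\mathcal L_2}\), so it is a left cone quasi-eigenvector realizing \(\underline\lambda_{\mathcal L_2}\).

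For the lower bound, we apply part (i) of Theorem~\ref{thm:stability-G} with \(\mathcal B=\mathcal L_1-\mathcal L_2\). The finiteness assumption ensures \(d^+_{\mathcal B}(u_{\mathcal L_2})>-\infty\), and the theorem gives
\(\overline\lambda_{\mathcal L_1}\ge\overline\lambda_{\mathcal L_2}+d^+_{\mathcal B}(u_{\mathcal L_2})\).
For the upper bound, part (ii) with \(d^-_{\mathcal B}(v_{\mathcal L_2})<+\infty\) yields
\(\underline\lambda_{\mathcal L_1}\le\underline\lambda_{\mathcal L_2}+d^-_{\mathcal B}(v_{\mathcal L_2})\).

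It remains to rewrite both estimates in terms of the common values. Since both pencils satisfy the minimax condition, we substitute \(\overline\lambda_{\mathcal L_i}=\underline\lambda_{\mathcal L_i}=\hat\lambda_{\mathcal L_i}\) for \(i=1,2\) in the two inequalities. Chaining them gives \eqref{eq:band-final-G}.

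There is no genuine obstacle here. The only point needing care is the identification in the first step: the quasi-pair realizes the one-sided profiles at exactly the levels \(\overline\lambda_{\mathcal L_2}\) and \(\underline\lambda_{\mathcal L_2}\), which holds because these coincide with \(\hat\lambda_{\mathcal L_2}\). The finiteness of the \(d^\pm\) terms is assumed in the statement, which is what licenses the use of the inequality \(\inf(f+g)\ge\inf f+\inf g\) inside Theorem~\ref{thm:stability-G}.
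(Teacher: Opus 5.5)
Your proposal is correct and matches the paper's proof: the paper likewise applies both parts of Theorem~\ref{thm:stability-G} with \(\mathcal B=\mathcal L_1-\mathcal L_2\) to the principal quasi-pair, then substitutes the two minimax equalities \(\overline\lambda_{\mathcal L_i}=\underline\lambda_{\mathcal L_i}=\hat\lambda_{\mathcal L_i}\).
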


\begin{proof}
	This follows directly from Theorem~\ref{thm:stability-G} and the two
	minimax equalities.
\end{proof}

For the one-parameter family
\(\mathcal L_\varepsilon:=\mathcal L+\varepsilon\mathcal B\),
\(\varepsilon>0\), Corollary~\ref{cor:band-G} gives the following
one-sided rate estimate.

\begin{corollary}[Small perturbations]
	\label{cor:stability_eps}
	Assume that \((\mathcal L,\mathcal G)\) and
	\((\mathcal L_\varepsilon,\mathcal G)\) satisfy the minimax condition for all
	sufficiently small \(\varepsilon>0\), that \((\mathcal L,\mathcal G)\) admits a
	principal right-left quasi-pair \((u_{\mathcal L},v_{\mathcal L})\), and that
	\(d^+_{\mathcal B}(u_{\mathcal L})>-\infty\) and
	\(d^-_{\mathcal B}(v_{\mathcal L})<+\infty\). Then
	\begin{equation}\label{eq:eps_bounds}
		\varepsilon d^+_{\mathcal B}(u_{\mathcal L})
		\le
		\hat\lambda_{\mathcal L_\varepsilon}
		-
		\hat\lambda_{\mathcal L}
		\le
		\varepsilon d^-_{\mathcal B}(v_{\mathcal L}).
	\end{equation}
	In particular,
	\(\hat\lambda_{\mathcal L_\varepsilon}\to\hat\lambda_{\mathcal L}\) as
	\(\varepsilon\to0^+\), and
	\[
	d^+_{\mathcal B}(u_{\mathcal L})
	\le
	\liminf_{\varepsilon\to0^+}
	\frac{\hat\lambda_{\mathcal L_\varepsilon}-\hat\lambda_{\mathcal L}}
	{\varepsilon}
	\le
	\limsup_{\varepsilon\to0^+}
	\frac{\hat\lambda_{\mathcal L_\varepsilon}-\hat\lambda_{\mathcal L}}
	{\varepsilon}
	\le
	d^-_{\mathcal B}(v_{\mathcal L}).
	\]
	Thus the interval
	\[
	\bigl[
	d^+_{\mathcal B}(u_{\mathcal L}),
	d^-_{\mathcal B}(v_{\mathcal L})
	\bigr]
	\]
	bounds the one-sided directional variation of
	\(\mathcal L\mapsto\hat\lambda_{\mathcal L}\) in the direction
	\(\mathcal B\).
\end{corollary}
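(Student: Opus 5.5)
The plan is to apply Corollary~\ref{cor:band-G} with \(\mathcal L_1:=\mathcal L_\varepsilon\) and \(\mathcal L_2:=\mathcal L\). For each sufficiently small \(\varepsilon>0\), both pencils \((\mathcal L,\mathcal G)\) and \((\mathcal L_\varepsilon,\mathcal G)\) satisfy the minimax condition by hypothesis. Moreover, \((\mathcal L,\mathcal G)\) admits the principal right-left quasi-pair \((u_{\mathcal L},v_{\mathcal L})\). Since \(\mathcal L_\varepsilon-\mathcal L=\varepsilon\mathcal B\), the corollary yields
\[
\hat\lambda_{\mathcal L}+d^+_{\varepsilon\mathcal B}(u_{\mathcal L})
\le\hat\lambda_{\mathcal L_\varepsilon}
\le\hat\lambda_{\mathcal L}+d^-_{\varepsilon\mathcal B}(v_{\mathcal L}),
\]
provided these quantities are finite.

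The key step is positive homogeneity of the one-sided quantities in \eqref{eq:dpm-G}. For \(\varepsilon>0\),
\[
\frac{\langle\varepsilon\mathcal B w,v\rangle}{(\mathcal G w,v)_H}=\varepsilon\,\frac{\langle\mathcal B w,v\rangle}{(\mathcal G w,v)_H}.
\]
Since multiplication by a positive constant commutes with \(\inf\) and \(\sup\), we get \(d^\pm_{\varepsilon\mathcal B}=\varepsilon d^\pm_{\mathcal B}\). The assumptions \(d^+_{\mathcal B}(u_{\mathcal L})>-\infty\) and \(d^-_{\mathcal B}(v_{\mathcal L})<+\infty\) supply the finiteness needed in the corollary, or more precisely in Theorem~\ref{thm:stability-G}, which only uses these one-sided finiteness conditions. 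Substituting gives \eqref{eq:eps_bounds}.

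A minor point to check is that \(d^+_{\mathcal B}(u_{\mathcal L})\le d^-_{\mathcal B}(v_{\mathcal L})\), so that both sides are real. This follows from \eqref{eq:eps_bounds} itself once both bounds are nonempty. Alternatively, it follows by noting that the infimum and supremum are each bounded by the value at the admissible pair \((u_{\mathcal L},v_{\mathcal L})\) when the pair is strictly positive. For the limit statement I would only need the bounds to be finite reals, and the one-sided hypotheses give exactly that: \(d^+\) is \(<+\infty\) because it is an infimum of finite numbers, and likewise \(d^-\) is \(>-\infty\).

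Finally, I divide \eqref{eq:eps_bounds} by \(\varepsilon>0\). This gives
\[
d^+_{\mathcal B}(u_{\mathcal L})\le\frac{\hat\lambda_{\mathcal L_\varepsilon}-\hat\lambda_{\mathcal L}}{\varepsilon}\le d^-_{\mathcal B}(v_{\mathcal L})
\]
for all small \(\varepsilon\), and taking \(\liminf\) and \(\limsup\) as \(\varepsilon\to0^+\) yields the stated chain. Letting \(\varepsilon\to0^+\) in \eqref{eq:eps_bounds}, with both constants finite, gives \(\hat\lambda_{\mathcal L_\varepsilon}\to\hat\lambda_{\mathcal L}\). I expect no real obstacle; the only care needed is the homogeneity identity and keeping \(\varepsilon\) strictly positive so the inequalities do not reverse.
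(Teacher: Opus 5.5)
Your proposal is correct and follows essentially the same route as the paper. You apply Corollary~\ref{cor:band-G} with $\mathcal L_1=\mathcal L_\varepsilon$ and $\mathcal L_2=\mathcal L$, use the positive homogeneity $d^\pm_{\varepsilon\mathcal B}=\varepsilon d^\pm_{\mathcal B}$ for $\varepsilon>0$, divide by $\varepsilon$, and pass to one-sided limits. Your extra observation is a harmless clarification of what the paper leaves implicit: $d^+_{\mathcal B}(u_{\mathcal L})$ is an infimum of real numbers, hence $<+\infty$, and $d^-_{\mathcal B}(v_{\mathcal L})$ is a supremum of real numbers, hence $>-\infty$, so the one-sided hypotheses make both finite.
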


\begin{proof}
	Apply Corollary~\ref{cor:band-G} with
	\(\mathcal L_1=\mathcal L_\varepsilon\) and
	\(\mathcal L_2=\mathcal L\). Since
	\(d^\pm_{\varepsilon\mathcal B}=\varepsilon d^\pm_{\mathcal B}\) for
	\(\varepsilon>0\), \eqref{eq:eps_bounds} follows. Dividing by
	\(\varepsilon\) and passing to one-sided limits gives the remaining
	inequalities.
\end{proof}

\subsection{Finite-dimensional approximation error}

Restricting \(\mathcal R_{\mathcal L}\) to the finite-dimensional cone, define
\[
\overline\lambda_{\mathcal L,r}
:=
\sup_{u\in\mathcal S_r\setminus\{0\}}
\inf_{v\in\mathcal S_r^o}
\mathcal R_{\mathcal L}(u,v),
\qquad
\underline\lambda_{\mathcal L,r}
:=
\inf_{v\in\mathcal S_r\setminus\{0\}}
\sup_{u\in\mathcal S_r^o}
\mathcal R_{\mathcal L}(u,v).
\]
For \(u_r,v_r\in\mathcal S_r\setminus\{0\}\), define the interpolation
defects
\begin{align}
	E^+_{\mathcal L,r}(u_r)
	&:=
	\inf_{v\in\mathcal S^o}
	\bigl[
	\mathcal R_{\mathcal L}(u_r,v)
	-
	\mathcal R_{\mathcal L}(u_r,\mathcal I_r v)
	\bigr],
	\label{eq:E-plus-short}\\
	E^-_{\mathcal L,r}(v_r)
	&:=
	\sup_{u\in\mathcal S^o}
	\bigl[
	\mathcal R_{\mathcal L}(u,v_r)
	-
	\mathcal R_{\mathcal L}(\mathcal I_r u,v_r)
	\bigr].
	\label{eq:E-minus-short}
\end{align}

\begin{theorem}[One-sided Galerkin error estimates]
	\label{thm:joint-error}
	Assume that \(\mathcal S_r\subset\mathcal S\), that the denominators in the
	continuous and discrete quotients are positive on the corresponding
	admissible pairs, and that
	\(\mathcal I_r(\mathcal S^o)\subset\mathcal S_r^o\). Let
	\(u_r,v_r\in\mathcal S_r\setminus\{0\}\) attain the discrete extremal levels:
	\[
	\inf_{w\in\mathcal S_r^o}
	\mathcal R_{\mathcal L}(u_r,w)
	=
	\overline\lambda_{\mathcal L,r},
	\qquad
	\sup_{w\in\mathcal S_r^o}
	\mathcal R_{\mathcal L}(w,v_r)
	=
	\underline\lambda_{\mathcal L,r}.
	\]
	If
	\(E^+_{\mathcal L,r}(u_r)>-\infty\) and
	\(E^-_{\mathcal L,r}(v_r)<+\infty\), then
	\begin{equation}\label{eq:joint-error-estimates}
		\overline\lambda_{\mathcal L}
		-
		\overline\lambda_{\mathcal L,r}
		\ge
		E^+_{\mathcal L,r}(u_r),
		\qquad
		\underline\lambda_{\mathcal L}
		-
		\underline\lambda_{\mathcal L,r}
		\le
		E^-_{\mathcal L,r}(v_r).
	\end{equation}
\end{theorem}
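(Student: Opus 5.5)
The plan is to mimic the proof of Theorem~\ref{thm:stability-G}. The discrete extremal elements $u_r,v_r$ serve as trial vectors in the continuous quotient, and the interpolation operator $\mathcal I_r$ transports continuous test directions into the discrete cone. The two inequalities are proved separately, and they are mirror images of each other.

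For the first inequality, note that $u_r\in\mathcal S_r\setminus\{0\}\subset\mathcal S\setminus\{0\}$. So $u_r$ is admissible in the supremum defining $\overline\lambda_{\mathcal L}$, and
\[
\overline\lambda_{\mathcal L}\ge\inf_{v\in\mathcal S^o}\mathcal R_{\mathcal L}(u_r,v).
\]
For each $v\in\mathcal S^o$ we split
\[
\mathcal R_{\mathcal L}(u_r,v)=\bigl[\mathcal R_{\mathcal L}(u_r,v)-\mathcal R_{\mathcal L}(u_r,\mathcal I_r v)\bigr]+\mathcal R_{\mathcal L}(u_r,\mathcal I_r v).
\]
This is well defined: $(u_r,\mathcal I_r v)\in(\mathcal S_r\setminus\{0\})\times\mathcal S_r^o$ because $\mathcal I_r(\mathcal S^o)\subset\mathcal S_r^o$, so the discrete denominator is positive. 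The first bracket is at least $E^+_{\mathcal L,r}(u_r)$ by definition. The second term is at least $\inf_{w\in\mathcal S_r^o}\mathcal R_{\mathcal L}(u_r,w)=\overline\lambda_{\mathcal L,r}$, again because $\mathcal I_r v\in\mathcal S_r^o$. Taking the infimum over $v$ and using $\inf(f+g)\ge\inf f+\inf g$ gives $\overline\lambda_{\mathcal L}\ge\overline\lambda_{\mathcal L,r}+E^+_{\mathcal L,r}(u_r)$. The finiteness assumption $E^+_{\mathcal L,r}(u_r)>-\infty$ keeps this sum meaningful, and $\overline\lambda_{\mathcal L,r}$ is finite as a discrete level.

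The second inequality is symmetric. Since $v_r\in\mathcal S\setminus\{0\}$, we have $\underline\lambda_{\mathcal L}\le\sup_{u\in\mathcal S^o}\mathcal R_{\mathcal L}(u,v_r)$. We split $\mathcal R_{\mathcal L}(u,v_r)$ as the defect $\mathcal R_{\mathcal L}(u,v_r)-\mathcal R_{\mathcal L}(\mathcal I_r u,v_r)$ plus $\mathcal R_{\mathcal L}(\mathcal I_r u,v_r)$. The defect is bounded above by $E^-_{\mathcal L,r}(v_r)$. Since $\mathcal I_r u\in\mathcal S_r^o$, the second term is at most $\underline\lambda_{\mathcal L,r}$. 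Then $\sup(f+g)\le\sup f+\sup g$ yields the claim.

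No step seems genuinely hard. The only point requiring care is the well-definedness of every quotient that appears. The continuous quotients $\mathcal R_{\mathcal L}(u_r,v)$ and $\mathcal R_{\mathcal L}(u,v_r)$ have positive denominators because the pairs lie in $\mathcal D$ under the continuous positivity hypothesis, using $\mathcal S_r\subset\mathcal S$. The interpolated quotients have positive denominators by the discrete positivity hypothesis, applied through $\mathcal I_r(\mathcal S^o)\subset\mathcal S_r^o$. One must also check that no $\infty-\infty$ expression arises, and the stated finiteness hypotheses on $E^\pm_{\mathcal L,r}$ handle exactly this.
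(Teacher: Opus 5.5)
Your proposal is correct and follows the paper's proof essentially step for step. You use $u_r$ (resp.\ $v_r$) as a trial element for the continuous level, split off the interpolation defect $E^\pm_{\mathcal L,r}$, and use $\mathcal I_r(\mathcal S^o)\subset\mathcal S_r^o$ to bound the interpolated term by the discrete extremal level; your remarks on the positivity of the denominators and on avoiding $\infty-\infty$ are accurate and only slightly more explicit than the paper's.
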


\begin{proof}
	Since \(u_r\in\mathcal S_r\setminus\{0\}\subset\mathcal S\setminus\{0\}\),
	\[
	\begin{aligned}
		\overline\lambda_{\mathcal L}
		&\ge
		\inf_{v\in\mathcal S^o}
		\mathcal R_{\mathcal L}(u_r,v)
		\\
		&=
		\inf_{v\in\mathcal S^o}
		\bigl[
		\mathcal R_{\mathcal L}(u_r,\mathcal I_r v)
		+
		\mathcal R_{\mathcal L}(u_r,v)
		-
		\mathcal R_{\mathcal L}(u_r,\mathcal I_r v)
		\bigr]
		\\
		&\ge
		\inf_{w\in\mathcal S_r^o}
		\mathcal R_{\mathcal L}(u_r,w)
		+
		E^+_{\mathcal L,r}(u_r)
		\\
		&=
		\overline\lambda_{\mathcal L,r}
		+
		E^+_{\mathcal L,r}(u_r).
	\end{aligned}
	\]
	Similarly, since
	\(v_r\in\mathcal S_r\setminus\{0\}\subset\mathcal S\setminus\{0\}\) and
	\(\sup(f+g)\le \sup f+\sup g\),
	\[
	\begin{aligned}
		\underline\lambda_{\mathcal L}
		&\le
		\sup_{u\in\mathcal S^o}
		\mathcal R_{\mathcal L}(u,v_r)
		\\
		&=
		\sup_{u\in\mathcal S^o}
		\bigl[
		\mathcal R_{\mathcal L}(\mathcal I_r u,v_r)
		+
		\mathcal R_{\mathcal L}(u,v_r)
		-
		\mathcal R_{\mathcal L}(\mathcal I_r u,v_r)
		\bigr]
		\\
		&\le
		\sup_{w\in\mathcal S_r^o}
		\mathcal R_{\mathcal L}(w,v_r)
		+
		E^-_{\mathcal L,r}(v_r)
		\\
		&=
		\underline\lambda_{\mathcal L,r}
		+
		E^-_{\mathcal L,r}(v_r).
	\end{aligned}
	\]
\end{proof}

\begin{corollary}[Two-sided error under the minimax condition]
	\label{cor:two-sided-error-short}
	Assume the hypotheses of Theorem~\ref{thm:joint-error}. If the continuous
	and finite-dimensional levels collapse,
	\[
	\overline\lambda_{\mathcal L}
	=
	\underline\lambda_{\mathcal L}
	=:
	\hat\lambda_{\mathcal L},
	\qquad
	\overline\lambda_{\mathcal L,r}
	=
	\underline\lambda_{\mathcal L,r}
	=:
	\hat\lambda_{\mathcal L,r},
	\]
	then
	\begin{equation}\label{eq:two-sided-error-short}
		E^+_{\mathcal L,r}(u_r)
		\le
		\hat\lambda_{\mathcal L}
		-
		\hat\lambda_{\mathcal L,r}
		\le
		E^-_{\mathcal L,r}(v_r).
	\end{equation}
\end{corollary}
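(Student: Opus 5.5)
This is a direct substitution into Theorem~\ref{thm:joint-error}; no new analytic input is needed. Under the hypotheses of that theorem we already have the two one-sided inequalities \eqref{eq:joint-error-estimates}:
\[
\overline\lambda_{\mathcal L}-\overline\lambda_{\mathcal L,r}\ge E^+_{\mathcal L,r}(u_r),
\qquad
\underline\lambda_{\mathcal L}-\underline\lambda_{\mathcal L,r}\le E^-_{\mathcal L,r}(v_r).
\]
The plan is to rewrite the left-hand sides of both inequalities using the two assumed collapses.

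In the first inequality, replace $\overline\lambda_{\mathcal L}$ by $\hat\lambda_{\mathcal L}$ and $\overline\lambda_{\mathcal L,r}$ by $\hat\lambda_{\mathcal L,r}$. This gives the lower bound $E^+_{\mathcal L,r}(u_r)\le\hat\lambda_{\mathcal L}-\hat\lambda_{\mathcal L,r}$.

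In the second inequality, replace $\underline\lambda_{\mathcal L}$ by $\hat\lambda_{\mathcal L}$ and $\underline\lambda_{\mathcal L,r}$ by $\hat\lambda_{\mathcal L,r}$. This gives the upper bound $\hat\lambda_{\mathcal L}-\hat\lambda_{\mathcal L,r}\le E^-_{\mathcal L,r}(v_r)$.

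Chaining the two bounds yields \eqref{eq:two-sided-error-short}.

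The only point requiring care is that the extremal elements $u_r$ and $v_r$ are those supplied by the hypotheses of Theorem~\ref{thm:joint-error}. The collapse of the discrete levels changes only the common value, not the attainment property, so the same $u_r,v_r$ can be used in both inequalities. Finiteness of $E^\pm_{\mathcal L,r}$ is part of the assumed hypotheses. I do not expect any genuine obstacle.
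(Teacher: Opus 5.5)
Your proposal is correct and matches the paper's argument: the paper states that the corollary follows directly from Theorem~\ref{thm:joint-error}, which is exactly your substitution of the collapsed values $\hat\lambda_{\mathcal L}$ and $\hat\lambda_{\mathcal L,r}$ into the two one-sided estimates \eqref{eq:joint-error-estimates}. Your remark that the same attaining elements $u_r,v_r$ serve for both inequalities is right and is all the care the step needs.
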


\begin{proof}
	This follows directly from Theorem~\ref{thm:joint-error}.
\end{proof}

\section{A genuinely coupled non-selfadjoint model with singular weight}
\label{sec:complete-singular-weight-model}

We present a model in which all assumptions of
Theorem~\ref{thm:continuous-minimax-limit} can be verified explicitly, while
neither the continuous nor the discrete minimax value is available in closed
form. Thus the convergence of the finite-dimensional minimax identities
follows from the general theorem rather than from an independently known
spectral formula. A constant symmetric-coupling subcase, treated at the end,
provides an exactly solvable benchmark and shows that the perturbation
estimate is sharp.

Let $\mathcal{V}=(H_0^1(0,1))^2$ and $H=[L^2(0,1)]^2$, equipped with the
componentwise positive cone
\begin{align*}
	&\mathcal S=\{\mathbf u=(u_1,u_2)\in\mathcal V: u_1,u_2\ge0\text{ a.e.}\},\\
	&\mathcal S^o=\{\mathbf u\in\mathcal V\cap[C([0,1])]^2: u_1,u_2>0\text{ in }(0,1)\}.
\end{align*}
Since $H_0^1(0,1)\hookrightarrow C([0,1])$, this is the standard cone of
nonnegative pairs together with its strictly positive part.

Fix $b\in\mathbb R\setminus\{0\}$ and consider the scalar Dirichlet operator
\begin{equation}\label{eq:complete-model-scalar-operator}
	\mathcal Au=-u''+bu', \qquad u(0)=u(1)=0,
\end{equation}
with adjoint \(\mathcal A^*v=-v''-bv'\). Its principal right-left eigenpair is
\begin{equation}\label{eq:complete-model-scalar-eigenpair}
	\mu_1=\pi^2+\tfrac{b^2}{4}, \qquad
	\phi_1(x)=e^{bx/2}\sin(\pi x), \qquad
	\psi_1(x)=e^{-bx/2}\sin(\pi x),
\end{equation}
so $\mathcal A\phi_1=\mu_1\phi_1$, $\mathcal A^*\psi_1=\mu_1\psi_1$, and
$\phi_1,\psi_1>0$ in $(0,1)$.

Let $\beta_1,\beta_2\in C([0,1])$ satisfy
\begin{equation}\label{eq:complete-model-coupling-bounds}
	0<\beta_-\le\beta_i(x)\le\beta_+<\mu_1,
	\qquad x\in[0,1],\quad i=1,2,
\end{equation}
with $\beta_1\not\equiv\beta_2$. Define
\begin{equation}\label{eq:complete-model-pencil}
	\mathcal L_{\boldsymbol\beta}
	=\begin{pmatrix}\mathcal A&-\beta_1(x)I\\-\beta_2(x)I&\mathcal A\end{pmatrix},
	\qquad
	\mathcal G=\begin{pmatrix}I&I\\I&I\end{pmatrix},
	\qquad
	\boldsymbol\beta=(\beta_1,\beta_2).
\end{equation}
The operator $\mathcal L_{\boldsymbol\beta}$ is non-selfadjoint because
$b\ne0$, and the unequal off-diagonal coefficients make the coupling
nonsymmetric. Moreover, the diagonal subspace
$\{\phi(1,1)^T:\phi\in H_0^1(0,1)\}$ is not invariant under
$\mathcal L_{\boldsymbol\beta}$, so the system does not reduce to the scalar
problem \eqref{eq:complete-model-scalar-operator}.

The weight $\mathcal G$ is singular, since
$\ker\mathcal G=\{(w,-w)^T:w\in L^2(0,1)\}$. Nevertheless,
\[
(\mathcal G\mathbf u,\mathbf v)_H
=\int_0^1(u_1+u_2)(v_1+v_2)\,dx>0
\]
for every admissible mixed cone pair; hence condition $(P)$ holds. The uniform
ellipticity condition \eqref{eq:unifellipt} is immediate because the principal
part of $\mathcal L_{\boldsymbol\beta}$ is $-d^2/dx^2$ times the identity
matrix, and the density property \eqref{eq:Wr-density} is standard for the
conforming finite element spaces introduced below.

\smallskip
\noindent\textit{Strong positivity.}
Conditions $(SP)$ and $(SP^*)$ hold. Suppose
$\mathbf w=(w_1,w_2)\in\mathcal S\setminus\{\mathbf0\}$ and
$\mathcal L_{\boldsymbol\beta}\mathbf w\in\mathcal S^*$. Then, in the weak
sense,
\[
\mathcal Aw_1\ge\beta_1w_2\ge0, \qquad
\mathcal Aw_2\ge\beta_2w_1\ge0.
\]
Neither component can vanish identically. Indeed, if $w_1\equiv0$, then the
first inequality gives $-\beta_1w_2\ge0$, and hence $w_2\equiv0$, which
contradicts $\mathbf w\ne\mathbf0$; the case $w_2\equiv0$ is the same.
The scalar strong maximum principle therefore yields $w_1,w_2>0$ in $(0,1)$.
Thus $\mathbf w\in\mathcal S^o$. Since
\[
\mathcal L_{\boldsymbol\beta}^*
=\begin{pmatrix}\mathcal A^*&-\beta_2(x)I\\-\beta_1(x)I&\mathcal A^*\end{pmatrix},
\]
the same argument proves $(SP^*)$.

\smallskip
\noindent\textit{Discrete setting.}
Let $0=x_0<x_1<\cdots<x_{N_r+1}=1$ be the uniform partition with mesh size
$h_r=1/(N_r+1)$, and let $W_r\subset H_0^1(0,1)$ be the conforming $P_1$
space with nodal basis $\{\psi_j\}_{j=1}^{N_r}$. Denote by $A_r$ and $M_r$
the scalar convection--diffusion and mass matrices:
\[
(A_r)_{ij}=\int_0^1\psi_j'\psi_i'\,dx+b\int_0^1\psi_j'\psi_i\,dx,
\qquad
(M_r)_{ij}=\int_0^1\psi_j\psi_i\,dx.
\]
On the uniform mesh,
\begin{equation}\label{eq:complete-model-scalar-matrix}
	(A_r)_{jj}=\tfrac{2}{h_r}, \qquad
	(A_r)_{j,j-1}=-\tfrac{1}{h_r}-\tfrac{b}{2}, \qquad
	(A_r)_{j,j+1}=-\tfrac{1}{h_r}+\tfrac{b}{2}.
\end{equation}
The strict mesh P\'eclet condition
\begin{equation}\label{eq:complete-model-peclet}
	|b|h_r<2
\end{equation}
makes $A_r$ an irreducible $Z$-matrix. Its symmetric part is positive
definite; hence $A_r$ is positive stable and therefore an irreducible
nonsingular $M$-matrix. It follows that $A_r^{-1}>0$ and $A_r^{-1}M_r>0$.
The Perron--Frobenius theorem then yields a principal generalized eigenvalue
$\mu_{1,r}>0$ and positive vectors $\bar\phi_{1,r},\bar\psi_{1,r}>0$ satisfying
\begin{equation}\label{eq:complete-model-discrete-scalar-eigenpairs}
	A_r\bar\phi_{1,r}=\mu_{1,r}M_r\bar\phi_{1,r}, \qquad
	A_r^T\bar\psi_{1,r}=\mu_{1,r}M_r\bar\psi_{1,r}.
\end{equation}
Standard conforming spectral approximation gives $\mu_{1,r}\to\mu_1$.
Discarding finitely many initial meshes and relabeling the remaining sequence,
we may assume
\begin{equation}\label{eq:complete-model-fine-mesh}
	|b|h_r<2 \qquad\text{and}\qquad \beta_+<\mu_{1,r}
	\qquad \forall\,r\ge1.
\end{equation}

Set
\[
(B_{i,r})_{jk}:=\int_0^1\beta_i(x)\psi_k(x)\psi_j(x)\,dx,
\qquad i=1,2.
\]
The two-component discrete pencil matrices are
\begin{equation}\label{eq:complete-model-discrete-pencil}
	L_{\boldsymbol\beta,r}
	=\begin{pmatrix}A_r&-B_{1,r}\\-B_{2,r}&A_r\end{pmatrix},
	\qquad
	G_r=\begin{pmatrix}M_r&M_r\\M_r&M_r\end{pmatrix}.
\end{equation}
For every discrete admissible mixed pair,
\[
\bar{\mathbf v}^{\,T}G_r\bar{\mathbf u}
=(\bar u_1+\bar u_2)^TM_r(\bar v_1+\bar v_2)>0,
\]
so $(P_r)$ holds. By \eqref{eq:complete-model-coupling-bounds}, for every
$\bar{\mathbf z}\ge0$,
\begin{equation}\label{eq:complete-model-weighted-mass-bounds}
	\beta_-M_r\bar{\mathbf z}\le B_{i,r}\bar{\mathbf z}\le\beta_+M_r\bar{\mathbf z},
	\qquad i=1,2,
\end{equation}
componentwise. Let $\mathbf q_r$ be the finite element vector with coefficient
vector
\[
\bar{\mathbf q}_r=(\bar\phi_{1,r},\bar\phi_{1,r})^T>0.
\]
Then
\[
L_{\boldsymbol\beta,r}\bar{\mathbf q}_r
=
\binom{\mu_{1,r}M_r\bar\phi_{1,r}-B_{1,r}\bar\phi_{1,r}}
{\mu_{1,r}M_r\bar\phi_{1,r}-B_{2,r}\bar\phi_{1,r}}
\ge
(\mu_{1,r}-\beta_+)
\binom{M_r\bar\phi_{1,r}}{M_r\bar\phi_{1,r}}>0.
\]
The matrix $L_{\boldsymbol\beta,r}$ is an irreducible $Z$-matrix: the diagonal
blocks are irreducible $Z$-matrices, and the off-diagonal blocks
$-B_{i,r}$, with $\beta_i\ge\beta_->0$, couple the two components. Since
there exists $\bar{\mathbf q}_r>0$ with
$L_{\boldsymbol\beta,r}\bar{\mathbf q}_r>0$, the matrix
$L_{\boldsymbol\beta,r}$ is an irreducible nonsingular $M$-matrix. The same is
true for $L_{\boldsymbol\beta,r}^T$. Consequently,
\[
\bar{\mathbf w}\ge0,\quad\bar{\mathbf w}\ne0,\quad
L_{\boldsymbol\beta,r}\bar{\mathbf w}\ge0
\;\Longrightarrow\;\bar{\mathbf w}>0,
\]
and the same implication holds with $L_{\boldsymbol\beta,r}^T$ in place of
$L_{\boldsymbol\beta,r}$. Thus condition $(B_r)$ holds.

\smallskip
\noindent\textit{Uniform bounds.}
By Proposition~\ref{prop:componentwise-mixed-enclosure} and
\eqref{eq:complete-model-weighted-mass-bounds},
\[
\lambda_{R,r}(\mathbf q_r)
=
\min_{1\le j\le2N_r}
\frac{(L_{\boldsymbol\beta,r}\bar{\mathbf q}_r)_j}
{(G_r\bar{\mathbf q}_r)_j}
\ge
\frac{\mu_{1,r}-\beta_+}{2}>0.
\]
Hence $\overline\lambda_r\ge\lambda_{R,r}(\mathbf q_r)>0$.
Lemma~\ref{lem:discrete-collapse} yields positive right-left eigenvectors
$\mathbf u_r,\mathbf v_r\in\mathcal S_r^o$ and the common discrete minimax
value
\begin{equation}\label{eq:complete-model-discrete-minimax-value}
	\hat\lambda_r
	=
	\sup_{\mathbf u\in\mathcal S_r\setminus\{\mathbf0\}}
	\inf_{\mathbf v\in\mathcal S_r^o}\mathcal R_r(\mathbf u,\mathbf v)
	=
	\inf_{\mathbf v\in\mathcal S_r\setminus\{\mathbf0\}}
	\sup_{\mathbf u\in\mathcal S_r^o}\mathcal R_r(\mathbf u,\mathbf v).
\end{equation}
For the upper bound, let $\mathbf p_r$ be the finite element vector with
coefficient vector
\[
\bar{\mathbf p}_r=(\bar\psi_{1,r},\bar\psi_{1,r})^T>0.
\]
Since $B_{i,r}^T=B_{i,r}$,
\[
\lambda_{L,r}(\mathbf p_r)
=
\max_{1\le j\le2N_r}
\frac{(L_{\boldsymbol\beta,r}^T\bar{\mathbf p}_r)_j}
{(G_r^T\bar{\mathbf p}_r)_j}
\le
\frac{\mu_{1,r}-\beta_-}{2}.
\]
Together these estimates give
\begin{equation}\label{eq:complete-model-discrete-bounds}
	0<\frac{\mu_{1,r}-\beta_+}{2}
	\le
	\hat\lambda_r
	\le
	\frac{\mu_{1,r}-\beta_-}{2}.
\end{equation}
Since $\mu_{1,r}\to\mu_1$ and $\beta_+<\mu_1$, there exists $\nu>0$ such that
\[
0<\nu^{-1}\le\hat\lambda_r\le\nu
\qquad \forall\,r\ge1.
\]

All hypotheses of Theorem~\ref{thm:continuous-minimax-limit} are satisfied.
Moreover, the positive lower bound in \eqref{eq:complete-model-discrete-bounds}
shows that the limiting value is strictly positive. Hence
\begin{equation}\label{eq:complete-model-minimax-convergence}
	\hat\lambda_r\longrightarrow\hat\lambda>0,
\end{equation}
where
\begin{equation}\label{eq:complete-model-continuous-minimax}
	\hat\lambda
	=
	\sup_{\mathbf u\in\mathcal S\setminus\{\mathbf0\}}
	\inf_{\mathbf v\in\mathcal S^o}\mathcal R_{\boldsymbol\beta}(\mathbf u,\mathbf v)
	=
	\inf_{\mathbf v\in\mathcal S\setminus\{\mathbf0\}}
	\sup_{\mathbf u\in\mathcal S^o}\mathcal R_{\boldsymbol\beta}(\mathbf u,\mathbf v),
\end{equation}
and every weak-in-\(\mathcal V\) and strong-in-\(H\) cluster point of the
normalized discrete right-left eigenpairs lies in
$\mathcal S^o\times\mathcal S^o$ and solves
\[
(\mathcal L_{\boldsymbol\beta}-\hat\lambda\mathcal G)\mathbf u_*=0,
\qquad
(\mathcal L_{\boldsymbol\beta}-\hat\lambda\mathcal G)^*\mathbf v_*=0.
\]

The conclusion concerns the minimax formulas themselves, not only the
eigenvalues of the Galerkin matrix pencils. After the discrete collapse,
Proposition~\ref{prop:componentwise-mixed-enclosure} gives
\begin{equation}\label{eq:complete-model-componentwise-minimax}
	\hat\lambda_r
	=
	\sup_{\mathbf u\in\mathcal S_r^o}
	\min_{1\le j\le2N_r}
	\frac{(L_{\boldsymbol\beta,r}\bar{\mathbf u})_j}
	{(G_r\bar{\mathbf u})_j}
	=
	\inf_{\mathbf v\in\mathcal S_r^o}
	\max_{1\le j\le2N_r}
	\frac{(L_{\boldsymbol\beta,r}^T\bar{\mathbf v})_j}
	{(G_r^T\bar{\mathbf v})_j}.
\end{equation}
Combining
\eqref{eq:complete-model-minimax-convergence}--\eqref{eq:complete-model-componentwise-minimax},
we obtain
\begin{align}
	\hat\lambda
	&=\lim_{r\to\infty}
	\sup_{\mathbf u\in\mathcal S_r^o}
	\min_{1\le j\le2N_r}
	\frac{(L_{\boldsymbol\beta,r}\bar{\mathbf u})_j}
	{(G_r\bar{\mathbf u})_j}
	\nonumber\\
	&=\lim_{r\to\infty}
	\inf_{\mathbf v\in\mathcal S_r^o}
	\max_{1\le j\le2N_r}
	\frac{(L_{\boldsymbol\beta,r}^T\bar{\mathbf v})_j}
	{(G_r^T\bar{\mathbf v})_j}
	\nonumber\\
	&=
	\sup_{\mathbf u\in\mathcal S\setminus\{\mathbf0\}}
	\inf_{\mathbf v\in\mathcal S^o}
	\mathcal R_{\boldsymbol\beta}(\mathbf u,\mathbf v)
	=
	\inf_{\mathbf v\in\mathcal S\setminus\{\mathbf0\}}
	\sup_{\mathbf u\in\mathcal S^o}
	\mathcal R_{\boldsymbol\beta}(\mathbf u,\mathbf v).
	\label{eq:complete-model-variational-convergence}
\end{align}
Thus the continuous minimax value is approximated by finite-dimensional
minimax optimization, without requiring an explicit formula for the
eigenpairs. In particular, every
$\mathbf u,\mathbf v\in\mathcal S_r^o$ gives the computable two-sided
enclosure
\begin{equation}\label{eq:complete-model-CW-enclosure}
	\min_{1\le j\le2N_r}
	\frac{(L_{\boldsymbol\beta,r}\bar{\mathbf u})_j}
	{(G_r\bar{\mathbf u})_j}
	\le
	\hat\lambda_r
	\le
	\max_{1\le j\le2N_r}
	\frac{(L_{\boldsymbol\beta,r}^T\bar{\mathbf v})_j}
	{(G_r^T\bar{\mathbf v})_j},
\end{equation}
and the gap between the two bounds serves as an a posteriori stopping
criterion for the finite-dimensional minimax problem.

\medskip
\noindent\textbf{Exactly solvable symmetric-coupling subcase.}
The preceding argument requires no explicit representation of the eigenpairs.
Such a representation reappears if the nonsymmetry condition is dropped and
one takes
\[
\beta_1(x)=\beta_2(x)=\beta,\qquad 0<\beta<\mu_1.
\]
Then the diagonal subspace is invariant and
\[
\mathcal L_\beta
=
\begin{pmatrix}\mathcal A&-\beta I\\-\beta I&\mathcal A\end{pmatrix}.
\]
With $e_+=(1,1)^T$, the eigenpairs are
$\mathbf u_*=\phi_1e_+$, $\mathbf v_*=\psi_1e_+$, and
\begin{equation}\label{eq:complete-model-explicit-continuous-value}
	\hat\lambda(\beta)=\frac{\mu_1-\beta}{2}.
\end{equation}
Likewise,
\[
\bar{\mathbf u}_r=(\bar\phi_{1,r},\bar\phi_{1,r})^T,
\qquad
\bar{\mathbf v}_r=(\bar\psi_{1,r},\bar\psi_{1,r})^T,
\]
and
\begin{equation}\label{eq:complete-model-explicit-discrete-value}
	\hat\lambda_r(\beta)=\frac{\mu_{1,r}-\beta}{2}
	\longrightarrow
	\frac{\mu_1-\beta}{2}
	=
	\hat\lambda(\beta),
\end{equation}
confirming the general convergence theorem in this exactly solvable case.

This subcase also shows that the perturbation estimate is sharp. Let
$\mathcal L_{\beta+\varepsilon}=\mathcal L_\beta+\varepsilon\mathcal P$ with
$\mathcal P\mathbf u=-(u_2,u_1)^T$, where $\varepsilon>0$ and
$\beta+\varepsilon<\mu_1$. Then
\[
\hat\lambda(\beta+\varepsilon)-\hat\lambda(\beta)=-\frac{\varepsilon}{2}.
\]
Since $\mathcal P\mathbf u_*=-\phi_1e_+$ and
$\mathcal G\mathbf u_*=2\phi_1e_+$,
\[
\frac{\langle\mathcal P\mathbf u_*,\mathbf v\rangle}
{(\mathcal G\mathbf u_*,\mathbf v)_H}
=
-\frac12
\qquad\forall\,\mathbf v\in\mathcal S^o,
\]
and similarly
\[
\frac{\langle\mathcal P\mathbf u,\mathbf v_*\rangle}
{(\mathcal G\mathbf u,\mathbf v_*)_H}
=
-\frac12
\qquad\forall\,\mathbf u\in\mathcal S^o.
\]
Hence
\[
d_{\mathcal P}^+(\mathbf u_*)=
d_{\mathcal P}^-(\mathbf v_*)=-\frac12,
\]
and
\begin{equation}\label{eq:complete-model-sharpness}
	\hat\lambda(\beta+\varepsilon)-\hat\lambda(\beta)
	=
	d_{\varepsilon\mathcal P}^+(\mathbf u_*)
	=
	d_{\varepsilon\mathcal P}^-(\mathbf v_*)
	=
	-\frac{\varepsilon}{2}.
\end{equation}
Both sides of the perturbation band are thus attained. For every sufficiently
fine mesh with $\beta+\varepsilon<\mu_{1,r}$,
\[
\hat\lambda_r(\beta+\varepsilon)-\hat\lambda_r(\beta)
=
-\frac{\varepsilon}{2}.
\]


\section{A non-cooperative skew coupling}
\label{sec:exampl}

We give a continuous example in which the standard positive cone is not
invariant, yet the cone minimax value is determined explicitly by the
principal eigenvalue of an underlying scalar operator. The example
illustrates the continuous minimax principle outside the cooperative
setting; discrete boundary-exclusion assumptions are not considered here.

Let
\[
\mathcal L\mathbf u
=
-\Delta\mathbf u+(b\cdot\nabla)\mathbf u
+(\mu I_2+K(x))\mathbf u,
\qquad
K(x)=\begin{pmatrix}0&\omega(x)\\-\omega(x)&0\end{pmatrix},
\]
in $\Omega$ with homogeneous Dirichlet boundary conditions, where
$b\in L^\infty(\Omega;\mathbb R^n)$, $\omega\in L^\infty(\Omega)$, and
$\omega\ge0$ a.e. Set
\[
\mathcal L_0\phi:=-\Delta\phi+b\cdot\nabla\phi+\mu\phi,
\]
and assume $\mathcal L_0\phi_1=\lambda_1\phi_1$,
$\mathcal L_0^*\psi_1=\lambda_1\psi_1$, with $\phi_1,\psi_1>0$ in $\Omega$.
We use the componentwise positive cone
\[
\mathcal S=\{\mathbf u=(u_1,u_2):u_1,u_2\ge0\},
\qquad
\mathcal S^o=\{\mathbf u\in\mathcal S:u_1,u_2>0\text{ in }\Omega\},
\]
and the Rayleigh quotient
\[
\mathcal R(\mathbf u,\mathbf v)
:=
\frac{\langle\mathcal L\mathbf u,\mathbf v\rangle}{(\mathbf u,\mathbf v)_H}.
\]

\begin{proposition}[Fixed-sign skew coupling]
	\label{prop:skew-drift-fixed-sign}
	If $\omega\ge0$ a.e.\ in $\Omega$, then
	$\overline\lambda=\underline\lambda=\lambda_1$. The boundary vectors
	\[
	\boldsymbol\phi^*:=(0,\phi_1)^T, \qquad
	\boldsymbol\psi^*:=(0,\psi_1)^T
	\]
	realize the right and left minimax levels, respectively, and
	$(\boldsymbol\phi^*,\boldsymbol\psi^*)_H=(\phi_1,\psi_1)_{L^2}>0$.
\end{proposition}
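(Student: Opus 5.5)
Compute $\mathcal R(\boldsymbol\phi^*,\mathbf v)$ for $\mathbf v\in\mathcal S^o$ and $\mathcal R(\mathbf u,\boldsymbol\psi^*)$ for $\mathbf u\in\mathcal S^o$ explicitly, then invoke Lemma~\ref{lem:saddle-collapse} together with Lemma~\ref{lem:main}. The key identity comes from the skew matrix. Since $K\mathbf u=(\omega u_2,-\omega u_1)^T$,
\[
\mathcal L\mathbf u=\bigl(\mathcal L_0u_1+\omega u_2,\ \mathcal L_0u_2-\omega u_1\bigr)^T .
\]

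\textbf{Right level.} For $\boldsymbol\phi^*=(0,\phi_1)^T$ we get $\mathcal L\boldsymbol\phi^*=(\omega\phi_1,\lambda_1\phi_1)^T$. Hence, for every $\mathbf v\in\mathcal S^o$,
\[
\mathcal R(\boldsymbol\phi^*,\mathbf v)=\lambda_1+\frac{\int_\Omega\omega\phi_1v_1}{\int_\Omega\phi_1v_2}\ \ge\ \lambda_1 .
\]
The denominator is positive because $\phi_1,v_2>0$, and the numerator is nonnegative because $\omega\ge0$. Taking $v_1\to0$ while $v_2=\psi_1$ stays fixed, through strictly positive $v_1=\varepsilon\psi_1$ (or simply $\varepsilon\phi_1$), shows the infimum equals $\lambda_1$. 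So $\overline\lambda(\boldsymbol\phi^*)=\lambda_1$, and therefore $\overline\lambda\ge\lambda_1$.

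\textbf{Left level.} For $\boldsymbol\psi^*=(0,\psi_1)^T$ and $\mathbf u\in\mathcal S^o$,
\[
\langle\mathcal L\mathbf u,\boldsymbol\psi^*\rangle=\langle\mathcal L_0u_2,\psi_1\rangle-\int\omega u_1\psi_1=\lambda_1(u_2,\psi_1)-\int\omega u_1\psi_1 .
\]
Thus $\mathcal R(\mathbf u,\boldsymbol\psi^*)=\lambda_1-\int\omega u_1\psi_1/\int u_2\psi_1\le\lambda_1$, with supremum $\lambda_1$ approached as $u_1\to0$. So $\underline\lambda(\boldsymbol\psi^*)=\lambda_1$, and therefore $\underline\lambda\le\lambda_1$. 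In particular $\lambda_1$ is a cone quasi-eigenvalue realized by $(\boldsymbol\phi^*,\boldsymbol\psi^*)$, so Lemma~\ref{lem:main} gives $\underline\lambda\le\lambda_1\le\overline\lambda$, consistent with the bounds just obtained.

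\textbf{Reverse inequalities (main obstacle).} It remains to show $\overline\lambda\le\lambda_1$ and $\underline\lambda\ge\lambda_1$. The realizing vectors lie on the boundary of the cone, so Corollary~\ref{cor:interior-eigenvalue-collapse} does not apply directly. The plan is to use instead the interior test vectors $\boldsymbol\Psi=(\psi_1,\psi_1)^T$ and $\boldsymbol\Phi=(\phi_1,\phi_1)^T$ in $\mathcal S^o$.

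For arbitrary $\mathbf u\in\mathcal S\setminus\{0\}$, test against $\boldsymbol\Psi$:
\[
\langle\mathcal L\mathbf u,\boldsymbol\Psi\rangle=\lambda_1\int(u_1+u_2)\psi_1+\int\omega(u_2-u_1)\psi_1 .
\]
The skew term does not cancel, so this test alone does not bound $\overline\lambda(\mathbf u)$ by $\lambda_1$. The refinement is to use tests $(\varepsilon\psi_1,\psi_1)^T$ and $(\psi_1,\varepsilon\psi_1)^T$ and split on which component of $\mathbf u$ is nonzero:
\begin{itemize}
\item if $u_2\ne0$, the test $(\varepsilon\psi_1,\psi_1)^T$ gives a quotient tending to $\lambda_1-\int\omega u_1\psi_1/\int u_2\psi_1\le\lambda_1$;
\item if $u_2=0$, then $u_1\ne0$, and the test $(\psi_1,\varepsilon\psi_1)^T$ gives a quotient tending to $\lambda_1$, since $\omega u_2=0$.
\end{itemize}
In both cases $\overline\lambda(\mathbf u)\le\lambda_1$.

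Symmetrically, for $\mathbf v\in\mathcal S\setminus\{0\}$, test with $\mathbf u=(\phi_1,\varepsilon\phi_1)^T$ or $(\varepsilon\phi_1,\phi_1)^T$:
\begin{itemize}
\item if $v_1\ne0$, the test $(\varepsilon\phi_1,\phi_1)^T$ gives a quotient tending to $\lambda_1+\int\omega\phi_1v_1/\int\phi_1v_1\ge\lambda_1$;
\item if $v_1=0$, then $v_2\ne0$, and the test $(\phi_1,\varepsilon\phi_1)^T$ gives a quotient tending to $\lambda_1$, since $\omega u_1v_1=0$.
\end{itemize}
Hence $\underline\lambda(\mathbf v)\ge\lambda_1$.

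These limits use the fact that, e.g., $\int u_2\psi_1>0$ whenever $u_2\ge0$ is nonzero, since $\psi_1>0$. Combining everything yields $\overline\lambda=\underline\lambda=\lambda_1$, with $\boldsymbol\phi^*,\boldsymbol\psi^*$ realizing the levels. Finally, $(\boldsymbol\phi^*,\boldsymbol\psi^*)_H=(\phi_1,\psi_1)_{L^2}>0$ is immediate from the positivity of $\phi_1$ and $\psi_1$.
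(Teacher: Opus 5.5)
Your evaluations at $\boldsymbol\phi^*$ and $\boldsymbol\psi^*$ are correct, and so is your proof of $\overline\lambda(\mathbf u)\le\lambda_1$. The left reverse inequality, however, fails in the case $v_1\equiv0$. Take $\mathbf v=(0,v_2)^T$ and your test $\mathbf u=(\phi_1,\varepsilon\phi_1)^T$. The skew contribution is $\int_\Omega\omega(u_2v_1-u_1v_2)\,dx=-\int_\Omega\omega\phi_1v_2\,dx$; there is no term $\omega u_1v_1$. The denominator is only $\varepsilon(\phi_1,v_2)_{L^2}$, so
\[
\mathcal R(\mathbf u,\mathbf v)=\lambda_1-\frac{\int_\Omega\omega\phi_1v_2\,dx}{\varepsilon\,(\phi_1,v_2)_{L^2}}.
\]
This is $\le\lambda_1$ for every $\varepsilon$ and tends to $-\infty$ whenever $\int_\Omega\omega\phi_1v_2\,dx>0$, so it gives no lower bound for $\sup_{\mathbf u\in\mathcal S^o}\mathcal R(\mathbf u,\mathbf v)$.

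You split on the wrong component. On the left side the harmful term is $-\int_\Omega\omega u_1v_2\,dx$, so the degenerate case is $v_2\equiv0$. For $\mathbf v=(v_1,0)^T$ your test does give $\lambda_1+\varepsilon\int_\Omega\omega\phi_1v_1\,dx/(\phi_1,v_1)_{L^2}\ge\lambda_1$. Relatedly, the limit you state for $v_1\not\equiv0$ should have denominator $(\phi_1,v_2)_{L^2}$, not $(\phi_1,v_1)_{L^2}$. When $v_2\equiv0$ there is no finite limit, although the quotient is still $\ge\lambda_1$.

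The repair is the paper's choice: use the single family $\mathbf u_\varepsilon=(\varepsilon\phi_1,\phi_1)^T$ for every $\mathbf v\in\mathcal S\setminus\{\mathbf{0}\}$. Then
\[
\mathcal R(\mathbf u_\varepsilon,\mathbf v)=\lambda_1+\frac{\int_\Omega\omega\phi_1(v_1-\varepsilon v_2)\,dx}{\varepsilon(\phi_1,v_1)_{L^2}+(\phi_1,v_2)_{L^2}} .
\]
If $v_2\not\equiv0$, this tends to $\lambda_1+\int_\Omega\omega\phi_1v_1\,dx/(\phi_1,v_2)_{L^2}\ge\lambda_1$, and to $\lambda_1$ when $v_1\equiv0$. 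If $v_2\equiv0$, it is $\ge\lambda_1$ for each $\varepsilon$.

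On the right side the paper likewise uses $(\varepsilon\psi_1,\psi_1)^T$ for all $\mathbf u$. Your extra test $(\psi_1,\varepsilon\psi_1)^T$ in the case $u_2\equiv0$ is valid but unnecessary. With this correction your argument coincides with the paper's direct computation.
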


\begin{proof}
	For $\mathbf u=(u_1,u_2)^T$ and $\mathbf v=(v_1,v_2)^T$,
	\[
	\mathcal R(\mathbf u,\mathbf v)
	=
	\frac{
		\langle\mathcal L_0u_1,v_1\rangle
		+\langle\mathcal L_0u_2,v_2\rangle
		+\displaystyle\int_\Omega\omega(u_2v_1-u_1v_2)\,dx
	}{
		(u_1,v_1)_{L^2}+(u_2,v_2)_{L^2}
	}.
	\]
	
	\noindent\textit{Right level.}
	For $\boldsymbol\phi^*=(0,\phi_1)^T$ and $\mathbf v\in\mathcal S^o$,
	\[
	\mathcal R(\boldsymbol\phi^*,\mathbf v)
	=
	\lambda_1
	+\frac{\displaystyle\int_\Omega\omega\phi_1v_1\,dx}
	{\displaystyle\int_\Omega\phi_1v_2\,dx}
	\ge\lambda_1.
	\]
	Taking $v_1=\varepsilon\psi_1$ and $v_2=\psi_1$ and letting
	$\varepsilon\downarrow0$ gives
	$\inf_{\mathbf v\in\mathcal S^o}\mathcal R(\boldsymbol\phi^*,\mathbf v)=\lambda_1$.
	Conversely, for any $\mathbf u\in\mathcal S\setminus\{\mathbf0\}$, set
	$\mathbf v_\varepsilon:=(\varepsilon\psi_1,\psi_1)^T\in\mathcal S^o$. Then
	\[
	\mathcal R(\mathbf u,\mathbf v_\varepsilon)
	=
	\lambda_1
	+\frac{\displaystyle\int_\Omega\omega(\varepsilon u_2-u_1)\psi_1\,dx}
	{\displaystyle\varepsilon(u_1,\psi_1)_{L^2}+(u_2,\psi_1)_{L^2}}.
	\]
	If $u_2\not\equiv0$, then
	$\limsup_{\varepsilon\downarrow0}\mathcal R(\mathbf u,\mathbf v_\varepsilon)\le\lambda_1$.
	If $u_2\equiv0$, the same inequality is immediate from the last formula,
	possibly with limit $-\infty$. Hence
	$\inf_{\mathbf v\in\mathcal S^o}\mathcal R(\mathbf u,\mathbf v)\le\lambda_1$
	for every $\mathbf u\in\mathcal S\setminus\{\mathbf0\}$, and therefore
	$\overline\lambda=\lambda_1$.
	
	\smallskip
	\noindent\textit{Left level.}
	For $\boldsymbol\psi^*=(0,\psi_1)^T$ and $\mathbf u\in\mathcal S^o$,
	\[
	\mathcal R(\mathbf u,\boldsymbol\psi^*)
	=
	\lambda_1
	-\frac{\displaystyle\int_\Omega\omega u_1\psi_1\,dx}
	{\displaystyle\int_\Omega u_2\psi_1\,dx}
	\le\lambda_1.
	\]
	Taking $u_1=\varepsilon\phi_1$ and $u_2=\phi_1$ and letting
	$\varepsilon\downarrow0$ gives
	$\sup_{\mathbf u\in\mathcal S^o}\mathcal R(\mathbf u,\boldsymbol\psi^*)=\lambda_1$.
	Conversely, for any $\mathbf v\in\mathcal S\setminus\{\mathbf0\}$, set
	$\mathbf u_\varepsilon:=(\varepsilon\phi_1,\phi_1)^T\in\mathcal S^o$. Then
	\[
	\mathcal R(\mathbf u_\varepsilon,\mathbf v)
	=
	\lambda_1
	+\frac{\displaystyle\int_\Omega\omega\phi_1(v_1-\varepsilon v_2)\,dx}
	{\displaystyle\varepsilon(\phi_1,v_1)_{L^2}+(\phi_1,v_2)_{L^2}}.
	\]
	If $v_2\not\equiv0$, then
	$\liminf_{\varepsilon\downarrow0}\mathcal R(\mathbf u_\varepsilon,\mathbf v)\ge\lambda_1$.
	If $v_2\equiv0$, the same inequality is immediate from the last formula,
	possibly with limit $+\infty$. Hence
	$\sup_{\mathbf u\in\mathcal S^o}\mathcal R(\mathbf u,\mathbf v)\ge\lambda_1$
	for every $\mathbf v\in\mathcal S\setminus\{\mathbf0\}$, and therefore
	$\underline\lambda=\lambda_1$.
	
	Finally,
	$(\boldsymbol\phi^*,\boldsymbol\psi^*)_H=\int_\Omega\phi_1\psi_1\,dx>0$.
\end{proof}

\begin{remark}[Relation with the left spectral edge]
	If $\omega(x)\equiv\omega_0$, complexification gives
	\[
	\sigma(\mathcal L)
	=
	\bigl(\sigma(\mathcal L_0)+i\omega_0\bigr)
	\cup\bigl(\sigma(\mathcal L_0)-i\omega_0\bigr).
	\]
	Consequently, whenever $\lambda_1$ is the left spectral edge of
	$\mathcal L_0$, it is also the left spectral edge of $\mathcal L$.
\end{remark}

\medskip

\section{Concluding remarks}

The cone minimax framework developed in this paper is related to several
classical directions in spectral theory, but it is not a direct
reformulation of them. For positive matrices and compact strongly positive
operators, the Perron--Frobenius and Krein--Rutman theories provide a
principal eigenvalue with positive right and left eigenvectors and lead to
minimax-type formulas. In that setting, the present construction is
consistent with the classical picture. Its purpose, however, is different:
the quotient
\[
\mathcal R(u,v)
=
\frac{\langle\mathcal L u,v\rangle}{(\mathcal G u,v)_H}
\]
is treated as a two-variable variational object associated with the pencil
\(\mathcal L-\lambda\mathcal G\), without first passing to a positive
resolvent formulation.

One of the main advantages of this formulation is its a posteriori content.
When the right and left minimax levels coincide, the selected value
\(\hat\lambda\) is not only characterized by a minimax identity, but is also
bracketed by computable one-sided quantities:
\[
\lambda_R(u)
:=
\inf_{v\in\mathcal S^o}\mathcal R(u,v),
\qquad
\lambda_L(v)
:=
\sup_{u\in\mathcal S^o}\mathcal R(u,v),
\qquad
\lambda_R(u)\le \hat\lambda\le \lambda_L(v).
\]
Thus any admissible trial functions \(u\) and \(v\) provide lower and upper
certificates for the minimax spectral level. This is a practical feature of
the cone minimax formula: it gives verifiable bounds without requiring the
complete solution of the eigenvalue problem.

The same point is important for finite-dimensional approximation. The
discrete cone minimax levels are not merely auxiliary Galerkin objects; they
come with their own computable a posteriori bounds and, under the compactness
and positivity assumptions used above, converge to the continuous minimax
level. Moreover, the corresponding normalized discrete right-left eigenpairs
have cluster points which solve the limiting pencil. In this sense, the
method combines a variational characterization with a finite-dimensional
certification procedure.

The perturbation estimates obtained above have a related one-sided character.
Classical analytic perturbation theory gives derivative formulas for isolated
simple eigenvalues. Here the selected value is a minimax characteristic of
the pencil, and its variation is controlled by one-sided quantities rather
than, in general, by a single derivative formula. Thus the theory provides a
robust perturbation band, which is sharp already in simple model examples.

Overall, cone minimax levels provide a variational and computational tool for
selected real spectral values of non-selfadjoint operator pencils, especially
when a positive-resolvent realization is unavailable or unnatural.

\end{document}